\definecolor{applegreen}{rgb}{0.55, 0.71, 0.0}
		\newcommand{\RNum}[1]{\uppercase\expandafter{\romannumeral #1\relax}}
\newcommand{\peter}{\mathcal{P}}
\DeclarePairedDelimiter\set{\{}{\}}
\DeclarePairedDelimiterX\lrangle[2]{\langle}{\rangle}{#1, #2}
\newtheorem{theorem}{Theorem}
\newtheorem{lemma}[theorem]{Lemma}
\newtheorem{hypo}[theorem]{Hypothesis}
\newtheorem{corollary}[theorem]{Corollary}
\newtheorem{proposition}[theorem]{Proposition}
\newtheorem{conjecture}[theorem]{Conjecture}
\newtheorem{observation}[theorem]{Observation}
\numberwithin{theorem}{section}
\theoremstyle{definition}
\newtheorem{definition}[theorem]{Definition}
\theoremstyle{remark}
\begin{document}
\title{4-cop-win graphs have at least 19 vertices}

\author{J\'er\'emie Turcotte}
\address{D\'{e}partment de math\'{e}matiques et de statistique, Universit\'{e}
de Montr\'{e}al, Montr\'eal, Canada}
\email{mail@jeremieturcotte.com}
\urladdr{www.jeremieturcotte.com}

\author{Samuel Yvon}
\address{D\'{e}partement d'informatique et de recherche op\'{e}rationnelle,
Universit\'{e} de Montr\'{e}al, Montr\'{e}al, Canada}
\email{samuel.yvon@umontreal.ca}
\urladdr{}

\subjclass[2020]{Primary 05C57; Secondary 05C35, 05C85, 90C35, 68V05}
\keywords {Cops and robbers, Cop number, 4-cop-win, Extremal problems, Graph construction, Computer-assisted proof}

\maketitle
\noindent

\begin{abstract}
	We show that the cop number of any graph on 18 or fewer vertices is at most
	3. This answers a question posed by Andreae in 1986, as well as more
	recently by Baird et al. We also find all 3-cop-win graphs on 11 vertices,
	narrow down the possible 4-cop-win graphs on 19 vertices and make some
	progress on finding the minimum order of 3-cop-win planar graphs.
\end{abstract}

%%%%%%%%%%%%%%%%%%%%%%%%%%%%%%%%%%%%%%%%%%%%%%%%%%%%%%%%%%%%%%%%%%%
%%%%%%%%%%%%%%%%%%%%%%%%%%%%%%%%%%%%%%%%%%%%%%%%%%%%%%%%%%%%%%%%%%%
%%%%%%%%%%%%%%%%%%%%%%%%%%%%%%%%%%%%%%%%%%%%%%%%%%%%%%%%%%%%%%%%%%%
%%%%%%%%%%%%%%%%%%%%%%%%%%%%%%%%%%%%%%%%%%%%%%%%%%%%%%%%%%%%%%%%%%%
%%%%%%%%%%%%%%%%%%%%%%%%%%%%%%%%%%%%%%%%%%%%%%%%%%%%%%%%%%%%%%%%%%%

\section{Introduction}
The game of cops and robbers was first defined by Quilliot in
\cite{quilliot_problemes_1978} and Nowakowski and Winkler in
\cite{nowakowski_vertex--vertex_1983}. Playing on the vertices of a connected,
undirected and finite graph, a group of cops pursues a robber. On the first turn
of the game, each cop selects a vertex as its initial position, followed by the
robber. The cops and the robber then alternate turns with the cops going first. On the
cops' turn (resp. robber's turn), each cop (resp. the robber) must choose either
to remain on its current vertex or go to a vertex in the neighbourhood of its
current position. If at some point during the game a cop and the robber are on
the same vertex, the robber is said to be caught and the cops win. The robber
wins if it has a strategy ensuring it is never caught by the cops. At all times,
the positions of the cops and of the robber are known by all. Furthermore, the
cops may coordinate their strategies, and are allowed to share vertices. An
alternative formulation is to define the game of cops and robbers as a
two-player game in which the first player controls some number of pieces named
the cops, the second player controls a single piece named the robber, and the
graph is the game board (for example as in \cite{joret_cops_2008}).

For a connected graph $G$, we denote by $c(G)$ the minimum number of cops which
can always catch the robber on $G$. Introduced by Aigner and Fromme in
\cite{aigner_game_1984}, $c(G)$ is called the \emph{cop number} of $G$. If
$c(G)=k$, we say $G$ is $k$-cop-win.

The cop number has been the main focus of most articles on cops and robbers, but
other parameters, such as the capture time, are also studied. See
\cite{bonato_conjectures_2016} for a quick overview of this field or
\cite{bonato_game_2011} for a more in-depth introduction. Although in this paper
we study the classical version of the game, a multitude of variants of this game
have been considered in recent years. For instance, we can allow the robber to
move multiple edges at once \cite{frieze_variations_2011}, or loosen the winning
condition to the cops being within some distance of the robber
\cite{bonato_cops_2010}.

While there has been a significant amount of research on the cop number, often
on specific classes of graphs (for instance in
\cite{andreae_pursuit_1986,frankl_pursuit_1987,joret_cops_2008}), there are
still surprisingly many elementary open questions. We consider here the problem
of finding the minimum order of a $k$-cop-win graph, more specifically for
$k=4$. This question was first posed by Andreae in \cite{andreae_pursuit_1986}.
It is also raised, seemingly independently, by Baird et al.~in
\cite{baird_minimum_2014}.

The case of $k=3$ has already been solved. Andreae claimed without proof in
\cite{andreae_pursuit_1986} that the Petersen graph (see Figure
\ref{fig:petersenrobertson_graph}) is the unique smallest 3-cop-win graph. This
statement was later proved by Baird et al.~in \cite{baird_minimum_2014}.

For a given graph $G$, we denote by $V(G)$ and $E(G)$, respectively, the set of
vertices and of edges of $G$. We denote by $M_k$ the minimum order of a
$k$-cop-win graph;  formally, $$M_k=\min\{|V(G)| : G \text{ is a connected graph, }
c(G)=k\}.$$Hosseini proved in \cite{hosseini_note_2018} that $M_{k}<M_{k+1}$,
confirming the intuition that if one scans all graphs by increasing order, one
cannot find a $(k+1)$-cop-win graph before finding a $k$-cop-win graph.

Although interesting by itself, the concept of minimum $k$-cop-win graphs is
also useful in regards to Meyniel's conjecture \cite{frankl_cops_1987}, which
asks whether the cop number $G$ is $O(\sqrt{n})$, where $n$ is the order of $G$.
Currently the best known upper bound is $n2^{-(1+o(1))\sqrt{\log_2 n}}$, as
proved by Lu and Peng \cite{lu_meyniels_2012}, Scott and Sudakov
\cite{scott_bound_2011}, and Frieze, Krivelevich and Loh
\cite{frieze_variations_2011}. A deeper survey of Meyniel's conjecture is
\cite{baird_meyniels_2013}. It was proved by Baird et al.~in
\cite{baird_minimum_2014} that Meyniel's conjecture is equivalent to proving
that $M_k\in \Omega(k^2)$.

The specific problem of finding the minimum order of a 4-cop-win graph has
received some interest. Hosseini proved in \cite{hosseini_game_2018} that
$M_4\geq 16$, and that such a minimal graph is 3-connected, provided it does not
contain a vertex of degree 2. The problem is also briefly referenced in
\cite{bonato_conjectures_2016}. It was suggested in
\cite{andreae_pursuit_1986,baird_minimum_2014} that the value of $M_4$ might be
19. Indeed, the smallest known 4-cop-win graph is the Robertson graph (see
Figure \ref{fig:petersenrobertson_graph}). This graph was first discovered by
Robertson in \cite{robertson_smallest_1964} as the smallest 4-regular graph with
girth 5.

Aigner and Fromme proved in \cite{aigner_game_1984} that graphs with girth at
least 5 have a cop number of a least their minimum degree. This result has since
been generalized by Frankl in \cite{frankl_cops_1987}, and more recently by
Bradshaw et al.~in \cite{bradshaw_cop_2020}, where they prove stronger lower
bounds on the cop number of graphs with high girth. One deduces that the cop
number of the Robertson graph is therefore at least 4. It is easily seen in
Figure \ref{fig:petersenrobertson_graph} that placing a cop on each of the three
exterior vertices ($a,b,c$) only leaves 4 unprotected vertices, which form
independent edges. A last cop may then capture the robber, hence the Robertson
graph is 4-cop-win (this argument appears in \cite{beveridge_petersen_2012}).

More generally, it was suggested in \cite{baird_minimum_2014} that the smallest
$d$-cop-win graphs might be the $(d,5)$-cages for $d\geq 3$. A $(d,g)$-cage is a
regular graph of degree $d$ and girth $g$ of minimum order. In particular, the
Petersen graph is the unique $(3,5)$-cage and the Robertson graph is the unique
$(4,5)$-cage.

\begin{figure}[h]
	\begin{subfigure}[t]{.49\textwidth}
		\centering
		\begin{tikzpicture}[scale=3, dot/.style = {circle, fill, minimum
						size=#1, inner sep=0pt, outer sep=0pt}, dot/.default =
						5pt  % size of the circle diameter 
			]
			%   Outside
			\node [dot] (0) at (0, 1) [label={[scale=1]above left:
			$\alpha_1$}]{}; \node [dot] (2) at (-0.951057, 0.309017)
			[label={[scale=1]below left: $\alpha_2$}] {}; \node [dot] (8) at
			(-0.587785,-0.809017)  [label={[scale=1]below left: $\alpha_3$}]{};
			\node [dot] (9) at (0.587785,-0.809017)  [label={[scale=1]below
			left: $\alpha_4$}]{}; \node [dot] (3) at (0.951057,0.309017)
			[label={[scale=1]below right: $\alpha_5$}]  {};

			%   Inside
			\node [dot] (1) at (0,0.5*1) [label={[scale=1]above left:
			$\beta_1$}]  {}; \node [dot] (4) at (0.5*-0.951057, 0.5*0.309017)
			[label={[scale=1]below left: $\beta_2$}]  {}; \node [dot] (6) at
			(0.5*-0.587785,0.5*-0.809017) [label={[scale=1]above left:
			$\beta_3$}] {}; \node [dot] (7) at (0.5*0.587785,0.5*-0.809017)
			[label={[scale=1]above right: $\beta_4$}] {}; \node [dot] (5) at
			(0.5*0.951057, 0.5*0.309017)  [label={[scale=1]below right:
			$\beta_5$}] {};

			\draw (0) to (1); \draw (0) to (2); \draw (0) to (3);

			\draw (1) to (6); \draw (1) to (7);

			\draw (2) to (8); \draw (2) to (4);

			\draw (4) to (5); \draw (4) to (7);

			\draw (5) to (6);

			\draw (8) to (6); \draw (8) to (9);

			\draw (9) to (7);

			\draw (9) to (3); \draw (3) to (5);
		\end{tikzpicture}
		\caption{The Petersen graph}
	\end{subfigure}
	~
	\begin{subfigure}[t]{.49\textwidth}
		\centering
		\begin{tikzpicture}[scale=2.5, dot/.style = {circle, fill, minimum
						size=#1, inner sep=0pt, outer sep=0pt}, dot/.default =
						5pt
				% size of the circle diameter 
			]

			%   Outside

			\node [dot] (1) at (2.9787, -1.1193)[label={below right:$c$}] {};
			\node [dot] (2) at (0.9558, -0.502) {}; \node [dot] (3) at (1.1304,
			-0.8047) {}; \node [dot] (4) at (1.3454, -0.3795) {}; \node [dot]
			(5) at (1.4329, 0.3248) {}; \node [dot] (6) at (1.1304, 0.1501) {};
			\node [dot] (7) at (1.6076, 1.2567)[label={above right:$b$}] {};
			\node [dot] (8) at (2.0847, -0.8047) {}; \node [dot] (9) at (1.6598,
			-0.5896) {}; \node [dot] (10) at (1.7822, 0.3248) {}; \node [dot]
			(11) at (2.0847, 0.1501) {}; \node [dot] (12) at (1.8697, -0.2751)
			{}; \node [dot] (13) at (0.9558, -0.1526) {}; \node [dot] (14) at
			(0.2364, -1.1193)[label={below right:$a$}] {}; \node [dot] (15) at
			(2.2594, -0.502) {}; \node [dot] (16) at (2.2594, -0.1526) {}; \node
			[dot] (17) at (1.5554, -0.065) {}; \node [dot] (18) at (1.4329,
			-0.9794) {}; \node [dot] (19) at (1.7822, -0.9794) {};

			\draw (1) to (2); \draw (1) to (5); \draw (1) to (16); \draw (1) to
			(19); \draw (2) to (3); \draw (2) to (9); \draw (2) to (13); \draw
			(3) to (4); \draw (3) to (7); \draw (3) to (18); \draw (4) to (5);
			\draw (4) to (12); \draw (4) to (15); \draw (5) to (6); \draw (5) to
			(10); \draw (6) to (7); \draw (6) to (13); \draw (6) to (17); \draw
			(7) to (8); \draw (7) to (11); \draw (8) to (9); \draw (8) to (15);
			\draw (8) to (19); \draw (9) to (10); \draw (9) to (17); \draw (10)
			to (11); \draw (10) to (14); \draw (11) to (12); \draw (11) to (16);
			\draw (12) to (13); \draw (12) to (19); \draw (13) to (14); \draw
			(14) to (15); \draw (14) to (18); \draw (15) to (16); \draw (16) to
			(17); \draw (17) to (18); \draw (18) to (19);
		\end{tikzpicture}
		\caption{The Robertson graph\footnotemark}
	\end{subfigure}
	\caption{Some small $(d,5)$-cage graphs}
	\label{fig:petersenrobertson_graph}
\end{figure}
\footnotetext{Computer-generated drawing
	\cite{wolfram_research_inc_mathematica_nodate}.} The main result of this
	article is to confirm that $M_4=19$. Although we are not able to prove a
	complete uniqueness result, we narrow down the possible 4-cop-win graphs on
	19 vertices by proving there are no 4-cop-win graphs on 19 vertices with
	maximum degree at most 4 or at least 7, except for the Robertson graph.

Although our proof is not directly based on those in \cite{hosseini_game_2018}
and \cite{baird_minimum_2014} (we however use two results from the latter), there are
certainly some common elements between those arguments and ours. In particular,
we similarly break down the problem by maximum degree and gradually develop
properties of potential 4-cop-win graphs by constructing explicit winning
strategies for 3 cops if those properties do not hold.

While we are able to obtain many interesting results using structural
properties, this article makes extensive use of computational methods to verify
the remaining cases. All of the code and data produced in the writing of this
article is available online at \cite{turcotte_code_2020}. This includes not only
the final results, but the graphs we generate in the intermediate algorithms,
precise counts of the number of graphs we generate at every step in these
algorithms, and the time required for almost all computations. All of the
computations are split up in small parts to facilitate verification. At various
points in this article, we will also discuss possible improvements and
alternative computational approaches.

The structure of this article is as follows. In Section \ref{previoussection},
we present the previously known results which we will need in our proof. In
Section \ref{smallgraphssections}, we compute the 3-cop-win graphs on 11 to 14
vertices (with some maximum degree restrictions), which we will use in later
sections to construct all graphs which are possibly 4-cop-win. In Section
\ref{highdegreesection}, by considering graphs containing the Petersen graph, we
show that all graphs on $n\leq 18$ vertices with maximum degree $n-12$ or $n-11$
have at most 3 vertices. By refining the techniques used for $n\leq 18$, we will
be able to expand the result to $n=19$. In Section \ref{maxdeg3section}, we
present the results of an exhaustive search which shows there are no 4-cop-win
subcubic graphs on $n\leq 20$ vertices, and mention an application to a related
problem. In Section \ref{remainingcasessection}, we introduce the Merging
Algorithm in order to tackle the cases with maximum degree $3<\Delta<n-12$.
These are be the last remaining cases, and will allow us to prove in Section
\ref{mainsection} that the minimum order of 4-cop-win graphs is 19. A visual aid
to how each part will be used in the proof is provided in Table
\ref{tab:applythms}.

\section{Notation and previous results}\label{previoussection} In this section,
we introduce most of the notation used in the article. We also cite previously
known results that will be useful in the following sections.

When considering a graph $G$, we will respectively denote by $n(G)$, $d_G(u)$,
$\delta(G)$ and $\Delta(G)$ the number of vertices of $G$, the degree of a
vertex $u$ in $G$, the minimum degree of $G$ and the maximum degree of $G$. If
$u$ is a vertex of $G$, $N_G(u)$ will denote the (open) neighbourhood of $u$ and
$N_G[u]=N_G(u)\cup \{u\}$ will denote the closed neighbourhood of $u$. For these
symbols, we will usually omit the $G$ when the choice of graph is easily
deduced.

For $S\subseteq V(G)$, $S^c$ will denote the complement (relative to $V(G)$) of
$S$, $\langle S \rangle$ will denote the subgraph of $G$ induced by $S$, and
$G-S$ will denote $\langle S^c\rangle$. When $S=\{x\}$, we will use the notation
$G-x$ instead of $G-S$. Similarly, if $H$ is a subgraph of $G$, then $G-H$ will
denote $G-V(H)$. When $G$ is isomorphic to a graph $G'$, we will write $G \simeq
G'$.

Denote by $\peter_0$ the Petersen graph, as seen in Figure
\ref{fig:petersenrobertson_graph}. As $\peter_0$ is $3$-regular with girth 5, we
know that $c(\peter_0)\geq 3$, and as it contains a dominating set of size $3$,
we know that $c(G)=3$. As noted in the introduction, the following theorem was
stated by Andreae in \cite{andreae_pursuit_1986} and proved by Baird et al.,
first by computer verification, as well as by using structural properties.

\begin{theorem}\cite{andreae_pursuit_1986, baird_minimum_2014}\label{min3} Let
	$G$ be a connected graph.
	\begin{enumerate}
		\item If $n\leq 9$, then $c(G)\leq 2$.
		\item If $n=10$, then $c(G)\leq 2$, unless $G$ is the Petersen graph.
	\end{enumerate}
	In particular, $M_3=10$.
\end{theorem}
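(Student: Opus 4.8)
The starting point is the elementary bound $c(G)\le\gamma(G)$: if the cops occupy a minimum dominating set, then whatever vertex the robber picks it lies in the closed neighbourhood of some cop, which captures it on the next turn. This already settles every connected graph with a dominating set of size at most $2$. The second reduction is by retracts: if $u$ is a \emph{corner} of $G$, meaning $N[u]\subseteq N[v]$ for some $v\neq u$, then the map fixing every vertex except $u$ (which is sent to $v$) retracts $G$ onto the still-connected graph $G-u$, and removing a corner changes neither connectivity nor the cop number, so $c(G)=c(G-u)$; we iterate until no corners remain. Since both operations only shrink the vertex set, it suffices to prove the theorem for \emph{corner-free} connected graphs $G$ with $n(G)\le 10$ and $\gamma(G)\ge 3$, where the goal is $c(G)\le 2$ unless $G\simeq\peter_0$.

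\textbf{A trapping criterion.} The key tool for producing two-cop strategies is Aigner and Fromme's path-guarding lemma: if $P$ is a shortest path between two vertices of $G$, then after finitely many moves a single cop can patrol $P$ so that the robber is captured the instant it steps onto $V(P)$; consequently the robber is confined forever to one connected component of $G-V(P)$. Hence, if $G$ admits an isometric path $P$ such that every component of $G-V(P)$ is dismantlable (cop-win), then $c(G)\le 2$: one cop guards $P$, the other walks into the robber's component and runs a one-cop winning strategy there. (The degenerate one-vertex path yields the special case: $c(G)\le 2$ whenever $G-v$ is dismantlable for some vertex $v$.) The plan is to show that every corner-free connected graph on at most $10$ vertices, other than $\peter_0$, possesses such a path.

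\textbf{The finite analysis (the main obstacle).} For a corner-free $G$ we automatically get $\delta(G)\ge 2$ (a vertex of degree $\le 1$ is a corner), and the hypothesis $\gamma(G)\ge 3$ forces $\Delta(G)\le n(G)-3$, since a vertex of degree $n-2$ together with a neighbour of its unique non-neighbour would dominate $G$. These constraints, together with $\gamma(G)\ge 3$ and the absence of corners, leave only a short list of graphs for each order $n\le 10$; for $n\le 9$ one checks, by cases on $\Delta(G)$ and the girth, that a suitable isometric path always exists, so there is no $3$-cop-win graph on at most $9$ vertices. For $n=10$ the only corner-free graph with $\gamma\ge 3$ for which no such path exists turns out to be the Petersen graph: here Aigner and Fromme's girth bound ($\text{girth}\ge5$ implies $c(G)\ge\delta(G)$) gives $c(\peter_0)\ge 3$, while a dominating set of size $3$ gives $c(\peter_0)\le 3$. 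Carrying this out by hand is where essentially all the difficulty lies — the delicate part is arranging the case analysis so that the set of graphs lacking an obvious guarding path is small enough to dispatch explicitly — and it is cleanest to confirm it by computer, as Baird et al.\ did: enumerate the connected graphs of order at most $10$ surviving the reductions above and compute each cop number via the classical fixed-point algorithm on the configuration graph with vertex set $V(G)^{k}\times V(G)$ (taking $k=2$, and $k=3$ only to certify $\peter_0$).

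\textbf{Conclusion.} The Petersen graph is a connected $3$-cop-win graph on $10$ vertices, so $M_3\le 10$, while parts (1) and (2) show every connected graph on at most $9$ vertices has $c(G)\le 2$, so $M_3\ge 10$; therefore $M_3=10$.
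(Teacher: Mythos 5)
This theorem is imported into the paper by citation (Andreae stated it, Baird et al.\ proved it, partly by computer); the paper contains no proof of its own, so there is nothing internal to compare your argument against. Your reductions are the standard and correct ones: $c(G)\le\gamma(G)$ disposes of graphs with domination number at most $2$; corner removal is a retraction and, for the purpose of bounding $c(G)$ by $2$, is harmless in both directions (this is exactly the paper's Corollary~\ref{removecorner}); and the Aigner--Fromme guarding lemma correctly yields $c(G)\le 2$ whenever some isometric path has only cop-win components in its complement. The derived constraints $\delta\ge 2$ and $\Delta\le n-3$ are also right.

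The gap is that the entire content of the theorem lives in the paragraph you label ``the finite analysis,'' and there it is asserted rather than proved. Two specific problems: first, your criterion (isometric path with dismantlable complement) is sufficient but not necessary for $c(G)\le 2$, so you must either exhibit such a path for every surviving graph or supply a different two-cop strategy for each graph that lacks one — you do neither, and the claim that the Petersen graph is the \emph{only} corner-free graph on at most $10$ vertices with $\gamma\ge 3$ lacking such a path is exactly the kind of statement that needs verification, not assertion. Second, the constraints you extract ($\delta\ge 2$, $\Delta\le n-3$, $\gamma\ge 3$, corner-free) do not reduce the candidate list to a ``short list'' in any way you demonstrate; even Lemma~\ref{nminus5degree} of the paper ($\Delta\ge n-5\Rightarrow c\le 2$), which is stronger than your $\Delta\le n-3$ bound, still leaves thousands of graphs on $9$ and $10$ vertices. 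Your fallback — ``confirm it by computer, as Baird et al.\ did'' — is legitimate and is in fact how the result was established, but it means your write-up is a proof outline that defers its core to the very computation being cited, not a self-contained proof. As a sketch of the known argument it is faithful; as a proof it is incomplete.
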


The proof of the previous theorem makes use of the following lemma, which will
also be useful to us.

\begin{lemma}\label{nminus5degree}\cite{baird_minimum_2014} Let $G$ be a
	connected graph. If $\Delta \geq n-5$, then $c(G) \leq 2$.
\end{lemma}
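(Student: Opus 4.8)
The plan is to win with two cops by parking the first cop on a high-degree vertex — which pins the robber into a tiny set — and then bringing the second cop in to finish.

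Let $v$ be a vertex of maximum degree, so $d(v)=\Delta\ge n-5$, and put $R:=V(G)\setminus N[v]$, so that $|R|\le 4$. First I would send cop $P_1$ to $v$. Since $v$ is adjacent to every vertex outside $R$, as soon as $P_1$ occupies $v$ the robber can never again stand on a vertex of $N[v]$ (such a vertex would be caught on the next cops' move); hence for the rest of the game the robber is confined to a single connected component $C$ of $\langle R\rangle$, and $|V(C)|\le 4$. Now recall that every connected graph on at most three vertices is cop-win, and that among connected graphs on four vertices only $C_4$ fails to be cop-win. So if $C\not\simeq C_4$, a single additional cop $P_2$ finishes: since $G$ is connected, $P_2$ can strictly decrease its distance to $C$ on every move until it enters $C$ — the robber, stuck inside $C$, cannot interfere — and then $P_2$ plays a one-cop winning strategy inside $C$. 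Two cops thus suffice whenever the robber's component of $\langle R\rangle$ is not a $4$-cycle; in particular this already settles every case with $\Delta\ge n-4$, where $|R|\le 3$, and any case $\Delta=n-5$ in which some valid base vertex has a non-cycle non-neighbourhood.

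It remains to treat $C=\langle R\rangle\simeq C_4$, say on vertices $a-b-c-d-a$, with $|R|=4$. The trick is to use the feature peculiar to $C_4$ that the vertex $a$ is adjacent to \emph{both} neighbours $b,d$ of the opposite vertex $c$. As before, $P_1\to v$ confines the robber to the $4$-cycle; then I would walk $P_2$ to the cycle — some cycle-vertex has a neighbour in $N(v)$ because $G$ is connected — landing $P_2$ on a cycle-vertex, which we may name $a$. The confined robber cannot prevent this, and once $P_2$ holds $a$ the robber is forced onto $c$, the unique cycle-vertex that is neither $a$ nor a neighbour of $a$. If $c$ has a neighbour $s\in N(v)$, then in one move $P_1$ goes from $v$ to $s$ (legal, since $v$ dominates $N(v)$) while the robber is stuck on $c$; now the robber on $c$ is adjacent to $P_1$ (on $s$) and each of its only other options, $b$ and $d$, is adjacent to $P_2$ (on $a$), so it is captured within two more moves. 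If instead $c$ has no neighbour in $N(v)$, then $P_1$ is free to leave $v$: it travels from $v$ through a neighbour of $a$ in $N(v)$ to $a$ itself, joining $P_2$, while the robber stays trapped on the path $b-c-d$; the two cops then pincer it on the $4$-cycle.

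I expect the main obstacle to be exactly this $C_4$ case, and inside it the bookkeeping of how the $4$-cycle attaches to $N(v)$: the delicate point is to guarantee that, while $P_1$ makes its way from $v$ toward the cycle, the robber cannot slip out through $N(v)$ into the cop-free region around $v$. What makes this controllable is precisely that $v$ is adjacent to \emph{all} of $N(v)$, so that a cop on $v$ — or on any single vertex of $N(v)$ — permanently denies the robber both $v$ and every other vertex of $N(v)$ as a refuge; when a cycle-vertex carries extra neighbours in $N(v)$ the naive route for $P_1$ can break down, but such a vertex then itself has degree at least $n-5$, so it becomes an alternative base vertex, and a short finite case analysis shows two cops still win in each such configuration. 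Organising these sub-cases cleanly is the only genuine work; the rest is routine.
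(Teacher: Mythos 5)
Your reduction to a $\leq 4$-vertex component of $G-N[v]$, the observation that $C_4$ is the only non-cop-win possibility, the pinning of the robber on the vertex $c$ antipodal to $P_2$'s entry point $a$, and your case where $N(c)\cap N(v)=\emptyset$ are all correct. (For what it's worth, the paper does not prove this lemma at all --- it cites it from Baird et al.\ and points to Beveridge et al.\ for a ``simple, visual proof'' --- so the comparison here is purely with the truth of your argument.) The genuine gap is in your case (i), and it is exactly the difficulty you flag in your last paragraph but do not actually resolve. When $P_1$ steps from $v$ to $s\in N(c)\cap N(v)$, the robber's options from $c$ are \emph{not} only $b$, $d$ and staying put: they include every other vertex of $N(c)\cap N(v)$. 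If there is some $s'\in N(c)\cap N(v)$ with $s'\notin N[s]$, the robber moves to $s'$ and is adjacent to no cop, so ``captured within two more moves'' is false. Worse, the natural continuation loops: $P_1$ must return to $v$ to re-seal $N[v]$, whereupon the robber's only safe exit from $s'$ is back to $c$ (since $P_2$ on $a$ covers $b$ and $d$), and you are back in the starting position having made no progress. Keeping $P_1$ glued to $v$ is also not an option, since one cop alone cannot win on $C_4$; so $P_1$ must leave $v$, and every such departure reopens these excursions into $N(v)$.

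The closing remark you offer to patch this --- that a cycle vertex with extra neighbours in $N(v)$ ``then itself has degree at least $n-5$, so it becomes an alternative base vertex'' --- is false: such a vertex has degree $2+|N(c)\cap N(v)|$, which is $4$ when $c$ has just two neighbours in $N(v)$, typically far below $n-5=\Delta$. So no alternative base vertex exists in general, and the ``short finite case analysis'' is precisely the missing content of the lemma. To close the gap you must handle configurations such as: $N(c)\cap N(v)$ containing two non-adjacent vertices, or a vertex $t'\in N(v)$ adjacent to $b$, $c$ and $d$ simultaneously (so that $P_2$, returning from $a$, cannot cover all of the robber's re-entry points into the cycle in one move). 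A workable route is to exploit the free tempo $P_2$ gains during the robber's forced excursion $c\to t'\to(\text{back into }C)$ --- e.g.\ $P_1$ plays $v\to t\to v$ while $P_2$ shifts along the cycle to cover $N(t')\cap C$ --- but making that work in all attachment patterns of the $C_4$ to $N(v)$ is the real work of the proof, and it is absent here.
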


A simple, visual proof of this lemma is available in
\cite{beveridge_petersen_2012}. We now introduce the concept of a retract, a
tool often used to study the game of cops and robbers (see for example
\cite{aigner_game_1984,nowakowski_vertex--vertex_1983}). The following is based
on \cite{berarducci_cop_1993}.

\begin{definition}
	Let $G$ be a graph. If $H$ is an induced subgraph of $G$, we say $H$ is a
	\emph{retract} of $G$ if there exists a mapping $f:V(G)\rightarrow V(H)$
	such that:
	\begin{enumerate}
		\item If $xy\in E(G)$, then $f(x)f(y)\in E(H)$ or $f(x)=f(y)$.
		\item $f|_{V(H)}: V(H)\rightarrow V(H)$ is the identity mapping.
	\end{enumerate}
	Such a mapping $f$ is called a \emph{retraction}.
\end{definition}
This definition formalizes the intuitive idea that $G$ can be "folded" onto $H$,
where each edge must either be sent onto an edge or onto a vertex. Those
familiar with graph homomorphisms will notice that condition (1) states that $f$
is a homomorphism from $G$ to $H$ if we consider $H$ to be reflexive (that is,
if we add a loop at each vertex of $H$). Reflexivity is necessary as consequence
of allowing the cops and the robber to stay on a vertex at their turn, implying
a loop on each vertex. The concept of retracts has been central in the study of
the game of cops and robbers, for instance also appearing in
\cite{nowakowski_vertex--vertex_1983}.

If $G$ is disconnected, denote $G_1,\dots,G_t$ the connected components of $G$.
By extension, we may define the cop number of a disconnected graph by
$c(G)=\max_{1\leq i\leq t}c(G_i)$. These definitions allow us to state the
following result of Berarduci and Intriglia, which we will use many times to
reduce the number of cases we need to consider.

\begin{theorem}\label{generalretract}\cite{berarducci_cop_1993} If $G$ is a
	connected graph and $H$ is a retract of $G$, then $$c(H)\leq c(G)\leq \max
	\{c(H),c(G-H)+1\}.$$
\end{theorem}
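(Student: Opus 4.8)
The plan is to prove the two inequalities separately, each by constructing an explicit cop strategy; the lower bound is a short ``shadow'' argument and the upper bound carries the real content.

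For $c(H)\le c(G)$, I would let $k=c(G)$ and have $k$ cops on $H$ run an imaginary game on $G$. Since the robber stays in $V(H)\subseteq V(G)$ and $H$ is induced, its actual position is simultaneously a legal position for an imaginary robber on $G$; the cops apply their optimal $G$-strategy against this imaginary robber and place their real tokens on $H$ at the $f$-images of their imaginary tokens on $G$. Condition~(1) of the definition of a retraction ensures each imaginary cop step (edge move or pass) maps to a legal step on $H$, and condition~(2) ensures that when the imaginary game ends, with some imaginary cop on the robber's vertex $w\in V(H)$, the corresponding real cop sits on $f(w)=w$, catching the robber on $H$ in the same number of rounds.

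For $c(G)\le\max\{c(H),c(G-H)+1\}$, I would set $k=\max\{c(H),c(G-H)+1\}$ and play in three phases, tracking the \emph{shadow} $f(r)$ of the robber's vertex $r$: by condition~(1), each robber move makes the shadow either pass or move along an edge of $H$, so the shadow behaves like an ordinary (at worst lazy) robber on $H$. Phase~1: using $c(H)\le k$ of the cops, confined to the induced subgraph $H$, run the optimal $H$-strategy against the shadow until some cop $C^*$ lands on $f(r)$. Phase~2: have $C^*$ hold the shadow forever, moving from $f(r)$ to $f(r')\in N_H[f(r)]$ after the robber goes from $r$ to $r'$ — a legal move since it lies in $H\subseteq G$. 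Phase~3: the robber is now permanently confined to $V(G)\setminus V(H)=V(G-H)$, since stepping onto any $r'\in V(H)$ lets $C^*$ follow to $f(r')=r'$; the remaining $k-1\ge c(G-H)$ cops then reposition into the component of $G-H$ holding the robber and catch it with the optimal $(G-H)$-strategy, which is exactly why $c$ is extended to disconnected graphs as the maximum over components.

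The step I expect to need the most care is Phase~2, because the cops move before the robber, so the shadowing cop is perpetually half a move behind; the clean way is to maintain the invariant ``immediately after each cop turn, $C^*$ occupies $f(r)$ for the robber's current vertex $r$,'' verifying that it is re-established after every robber move and that it holds the instant Phase~1 ends (if the shadow is captured on a robber turn, $C^*$ simply passes on its next turn). The remaining details are routine: that $H$ and $G-H$ being induced makes every restricted cop move legal in $G$, that a lazy robber on $H$ is no harder to catch than a free one, and that cops not currently active may idle or relocate without the robber escaping its confinement.
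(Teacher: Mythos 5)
The paper does not prove Theorem~\ref{generalretract} at all --- it is imported verbatim from \cite{berarducci_cop_1993} and used as a black box (e.g.\ in the proofs of Corollaries \ref{retractcomponents} and \ref{removecorner}). Your argument is correct and is essentially the standard proof from that reference: the lower bound by pushing an optimal $G$-strategy through the retraction $f$, and the upper bound by first capturing and then permanently holding the shadow $f(r)$, which confines the robber to a single component of $G-H$ where the remaining $k-1\geq c(G-H)$ cops finish. You correctly identify and resolve the only delicate points, namely the ``cops move first'' timing via the invariant that $C^*$ sits on $f(r)$ after each cop turn, and the fact that the confined robber cannot change components of $G-H$ since $G-H$ is induced.
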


A specific case of this theorem is the following reformulation of a corollary in
\cite{berarducci_cop_1993}, which will often be easier to use.

\begin{corollary}\label{retractcomponents} If $G$ is a connected graph, $u$ is a
	vertex of $G$ and $K$ is a union of some connected components of $G-N[u]$,
	then
	$$c(G-K)\leq c(G)\leq \max\{c(G-K),c(K)+1\}.$$

	In particular, if $c(K)\leq k-1$, then $c(G)\leq k$ if and only if
		$c(G-K)\leq k$.
\end{corollary}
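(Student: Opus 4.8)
The plan is to derive this directly from Theorem \ref{generalretract} by exhibiting $G-K$ as a retract of $G$. Write $K = K_1 \cup \dots \cup K_s$ where each $K_i$ is a connected component of $G-N[u]$, and let $H = G-K$; note that $H$ is an induced subgraph of $G$ which contains $u$ together with all of $N(u)$ (since $K \subseteq V(G) \setminus N[u]$). First I would define the candidate retraction $f : V(G) \to V(H)$ by $f(v) = v$ for $v \in V(H)$ and $f(v) = u$ for $v \in K$. Condition (2) of the definition of retract is immediate. For condition (1), take an edge $xy \in E(G)$: if both endpoints lie in $V(H)$ there is nothing to check since $f$ fixes them; if both lie in $K$ then $f(x) = f(y) = u$, which is allowed; the only substantive case is $x \in V(H)$, $y \in K$. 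Since $y$ lies in some component $K_i$ of $G - N[u]$ and $x$ is adjacent to $y$, the vertex $x$ is either in that same component $K_i$ (excluded, as $x \notin K$) or $x \in N[u]$; but $x \neq u$ would force $x \in N(u) \subseteq V(K_i^c)$... more carefully, $x$ adjacent to a vertex of $G - N[u]$ with $x \notin V(G-N[u])$ means $x \in N[u]$, and if $x \neq u$ then $x \in N(u)$, so $x$ would be adjacent to $y$ with $y \in K_i$, contradicting that $K_i$ is a connected component of $G - N[u]$ (an edge from $N(u)$ into a component of the vertex-deleted graph is fine — the issue is only edges between $N(u)$ and $G-N[u]$, which are permitted). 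Let me instead argue: we need $f(x)f(y) = xu \in E(H)$ or $x = u$. If $x \in N(u)$ then $xu \in E(G)$, and since both $x, u \in V(H)$ we get $xu \in E(H)$; if $x = u$ we are done. The remaining possibility is $x \in V(H) \setminus N[u]$, i.e. $x$ lies in some component $K_j$ of $G-N[u]$ with $K_j \not\subseteq K$ — but $K$ is a union of \emph{entire} components, so $x \in K_j$ implies $K_j$ is one of the $K_i$ and hence $x \in K$, a contradiction. Thus every edge of $G$ with one endpoint in $K$ actually has its other endpoint in $N[u]$, handled above, so $f$ is a retraction and $H = G-K$ is a retract of $G$.

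Applying Theorem \ref{generalretract} with this $H$ gives $c(G-K) \leq c(G) \leq \max\{c(G-K), c(K)+1\}$, using that $G - H = G - V(G-K) = \langle K \rangle$ has cop number $c(K)$ (extending $c$ to the possibly disconnected graph $K$ componentwise, as in the excerpt). This is exactly the displayed inequality. For the ``in particular'' statement: if $c(K) \leq k-1$, then $c(K)+1 \leq k$. The forward direction of the biconditional is the trivial monotonicity $c(G-K) \leq c(G)$. Conversely, if $c(G-K) \leq k$ then $\max\{c(G-K), c(K)+1\} \leq \max\{k, k\} = k$, so $c(G) \leq k$ by the upper bound. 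Hence $c(G) \leq k$ iff $c(G-K) \leq k$.

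I expect the only real obstacle is the careful verification that $f$ is a homomorphism, specifically ruling out edges directly joining two distinct components $K_i, K_j$ of $G - N[u]$ or joining $K$ to $V(H) \setminus N[u]$ — both are impossible precisely because $K$ is a union of \emph{connected components} (not an arbitrary vertex subset) of $G - N[u]$, so any neighbour of a vertex of $K$ outside $K$ must lie in $N[u]$. Everything else is bookkeeping: matching the notation $c(K)$ for the disconnected induced subgraph $\langle K \rangle$ with the componentwise definition given just before the statement, and unwinding the max in Theorem \ref{generalretract}. No computation is involved.
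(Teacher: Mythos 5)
Your proof is correct and is essentially the paper's own argument: the paper defines exactly the same map $f$ (sending $K$ to $u$ and fixing everything else), asserts it is a retraction, and applies Theorem \ref{generalretract} to $H=G-K$. You have merely written out the verification that $f$ is a homomorphism, which the paper leaves as ``easy to verify.''
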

\begin{proof}
	It is easy to verify that $f:V(G)\rightarrow V(G-K)$ defined by
	$$f(x)=\begin{cases} u & \text{if }x\in V(K) \\
			x & \text{otherwise}\end{cases}$$ is a retraction. It is only left
	to apply Theorem \ref{generalretract} to $H=G-K$.
\end{proof}

One trivial consequence of this corollary is that if the cop number of every
component of $G-N[u]$ is at most $k-1$, then $c(G)\leq k$. One can also see this
directly by leaving a fixed cop on $u$ and playing with $k-1$ cops on the
connected component of $G-N[u]$ in which the robber is located.

We then easily get the following result, which is implicit in
\cite{hosseini_game_2018}.

\begin{corollary}\label{maxdegnlarger11} If $G$ is a connected graph and
	$\Delta>n-11$, then $c(G)\leq 3$.
\end{corollary}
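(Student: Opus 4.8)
The plan is to exploit Lemma \ref{nminus5degree} together with Corollary \ref{retractcomponents}. Let $G$ be a connected graph with $\Delta > n-11$, and fix a vertex $u$ with $d_G(u)=\Delta$, so $|N[u]| = \Delta+1 \geq n-9$. Then the graph $H = G-N[u]$ has at most $9$ vertices, hence so does each connected component of $H$. By part (1) of Theorem \ref{min3}, every component of $G-N[u]$ has cop number at most $2$. Taking $K$ to be the union of all components of $G-N[u]$ — equivalently, $K = G-N[u]$ itself — we have $c(K) \leq 2$.

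Now I would apply Corollary \ref{retractcomponents} with this choice of $u$ and $K$: since $G-K = \langle N[u] \rangle$ has the vertex $u$ adjacent to every other vertex, it satisfies $\Delta(G-K) = n(G-K)-1 \geq (n(G-K))-5$, so Lemma \ref{nminus5degree} gives $c(G-K) \leq 2 \leq 3$. Combined with $c(K) \leq 2 = 3-1$, the ``in particular'' clause of Corollary \ref{retractcomponents} (with $k=3$) yields $c(G) \leq 3$ if and only if $c(G-K) \leq 3$, and the latter holds. Alternatively, one can bypass Corollary \ref{retractcomponents} entirely and argue directly: place one cop permanently on $u$, which confines the robber to a single component of $G-N[u]$ that has at most $9$ vertices, hence cop number at most $2$ by Theorem \ref{min3}; the remaining two cops catch the robber there.

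I do not expect any real obstacle here — the statement is essentially a bookkeeping combination of two results already in hand. The only point requiring a moment's care is the inequality $|N[u]| = \Delta+1 \geq n-9$, which forces each component of $G-N[u]$ to have order at most $9$ so that part (1) of Theorem \ref{min3} (rather than the more delicate part (2)) applies; this is exactly why the hypothesis is $\Delta > n-11$ and not something weaker. If one only assumed $\Delta \geq n-11$, the complement of $N[u]$ could have $10$ vertices and be the Petersen graph, which is $3$-cop-win, and the argument would break.
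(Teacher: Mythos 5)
Your proof is correct and follows essentially the same route as the paper: remove the closed neighbourhood of a maximum-degree vertex, note that what remains has at most $9$ vertices so each component is $2$-cop-win by Theorem \ref{min3}(1), and conclude via the retract machinery of Corollary \ref{retractcomponents} (the paper invokes the ``stationary cop on $u$'' remark, which is the same mechanism you give as your alternative). The only cosmetic difference is that you call on Lemma \ref{nminus5degree} to bound $c(\langle N[u]\rangle)$, which is harmless but unnecessary since $u$ dominates that subgraph.
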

\begin{proof}
	If $\Delta>n-11$ and $u$ is a vertex of maximum degree, then
	$|V(G-N[u])|<10$. By Theorem \ref{min3}, every connected component of
	$G-N[u]$ has cop number at most 2. The last remark yields the result.
\end{proof}

Finally, we recall a well known concept in the study of the game of cops and
robbers.

\begin{definition}
	Let $x,u$ be distinct vertices of a graph $G$. If $N(x)\subseteq N[u]$, we
	say $x$ is \emph{cornered} by $u$ or that $x$ is a \emph{corner}.
\end{definition}

We note that this is a slight variation on the classical notion of a corner (or
irreducible vertex), as it normally requires $ux$ to be an edge, see
\cite{nowakowski_vertex--vertex_1983}. We may now get the following well-known
result as a further simplification of Corollary \ref{retractcomponents}.

\begin{corollary}\label{removecorner} Let $G$ be a connected graph and $x$ be a
	corner of $G$. If $c(G-x)\geq 2$, then $c(G)=c(G-x)$. If $c(G-x)=1$, then
	$c(G)\in \{1,2\}$.
\end{corollary}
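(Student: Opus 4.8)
The plan is to realize $G-x$ as a retract of $G$ and then quote Theorem \ref{generalretract}. Since $x$ is a corner, fix a vertex $u\neq x$ with $N(x)\subseteq N[u]$, and note that $u\in V(G-x)$. A preliminary observation I would make is that $G-x$ is connected: every neighbour of $x$ lies in $N[u]$, hence in the connected component of $G-x$ that contains $u$, so if $G-x$ had a second component, no walk in $G$ could reach it from $u$, contradicting the connectedness of $G$.

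Next I would define $f\colon V(G)\to V(G-x)$ by $f(x)=u$ and $f(y)=y$ for $y\neq x$, and verify it is a retraction. Since $V(G-x)$ consists exactly of the vertices $y\neq x$, condition (2) is immediate, and $f$ indeed takes values in $V(G-x)$. For condition (1), let $yz\in E(G)$: if $x\notin\{y,z\}$ then $f(y)f(z)=yz\in E(G-x)$; if, say, $z=x$, then $y\in N(x)\subseteq N[u]$, so either $y=u=f(x)$, or $yu\in E(G-x)$ and $f(y)f(z)=yu$. Thus $G-x$ is a retract of $G$.

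It then remains to apply Theorem \ref{generalretract} with $H=G-x$. Here $G-H$ is the one-vertex graph on $\{x\}$, so $c(G-H)=1$, and the theorem yields
$$c(G-x)\le c(G)\le\max\{c(G-x),2\}.$$
If $c(G-x)\ge 2$, the upper bound equals $c(G-x)$, so $c(G)=c(G-x)$; if $c(G-x)=1$, the chain reads $1\le c(G)\le 2$, i.e.\ $c(G)\in\{1,2\}$. There is no genuine obstacle in this argument; the only points needing a little care are the connectivity of $G-x$ (so that Theorem \ref{generalretract} applies cleanly) and condition (1) of the retraction for edges incident to $x$, both dispatched above. One could alternatively deduce the case $x\notin N[u]$ from Corollary \ref{retractcomponents} by taking $K=\{x\}$, an isolated vertex of $G-N[u]$, but the direct retraction covers all cases uniformly.
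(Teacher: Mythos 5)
Your argument is correct and is essentially the paper's own proof: both define the retraction $f$ sending $x$ to the cornering vertex $u$ and fixing everything else, then apply Theorem \ref{generalretract} with $H=G-x$ and $c(G-H)=1$. The extra details you supply (connectedness of $G-x$ and the edge-by-edge check of the retraction condition) are correct but are exactly the steps the paper leaves as "easily seen."
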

\begin{proof}
	Let $u$ be a vertex cornering $x$. First define $f:V(G)\rightarrow V(G-x)$
	by $$f(y)=\begin{cases} u & \text{if }y=x     \\
			y & \text{otherwise.}\end{cases}$$It is easily seen that the
			condition that $x$ be cornered by $u$ implies that $f$ is a
			retraction. The statement then follows from Theorem
			\ref{generalretract}.
\end{proof}

%%%%%%%%%%%%%%%%%%%%%%%%%%%%%%%%%%%%%%%%%%%%%%%%%%%%%%%%%%%%%%%%%%%
%%%%%%%%%%%%%%%%%%%%%%%%%%%%%%%%%%%%%%%%%%%%%%%%%%%%%%%%%%%%%%%%%%%
%%%%%%%%%%%%%%%%%%%%%%%%%%%%%%%%%%%%%%%%%%%%%%%%%%%%%%%%%%%%%%%%%%%
%%%%%%%%%%%%%%%%%%%%%%%%%%%%%%%%%%%%%%%%%%%%%%%%%%%%%%%%%%%%%%%%%%%
%%%%%%%%%%%%%%%%%%%%%%%%%%%%%%%%%%%%%%%%%%%%%%%%%%%%%%%%%%%%%%%%%%%

\section{Computational results for small 3-cop-win
  graphs}\label{smallgraphssections} In this section, we find some 3-cop-win
  graphs on at most $14$ vertices respecting some degree conditions. We will do
  this by computing the cop number of every graph of the desired orders and
  degrees. The results will be useful in the following sections.

Graph generation in this section and in Section \ref{maxdeg3section} is done
using the \texttt{geng} function provided with the nauty/Traces package (version
26r12) \cite{mckay_practical_2014}. The algorithm to compute the cop number is
similar to that proposed, in particular, in
\cite{bonato_cops_2010,clarke_characterizations_2012,rickert_cops_2017}, which
we have implemented for cop numbers $1,2,3$ in the Julia language
\cite{bezanson_julia_2017,bromberger_juliagraphslightgraphsjl_2017}. For a given
$k$ (which will be between $1$ and $3$ in our case), the algorithm determines
whether $c(G) \leq k$ or $c(G)>k$. Hence, if we run the algorithms for
$1,\dots,m$ cops on a graph, we can determine the cop number exactly if it is a
most $m$, or if not then say that $c(G)\geq m+1$.

To test the validity of our implementation, we have compared the results for the
cop number of connected graphs up to 10 vertices to those in
\cite{baird_minimum_2014}. Following a small discrepancy between the counts, our
tallies of cop-win graphs were also verified to be correct by implementing a
dismantling algorithm \cite{nowakowski_vertex--vertex_1983} and by comparing
with the implementation at \cite{afanassiev_cop-number_2017}. To test our code
for higher cop numbers, it was also run on some cage graphs which we know 3 cops
lose. Based on the results of these tests, we are confident in the correctness
of our implementation.

We first define a variant of the Petersen graph.
\begin{definition}
	We say a connected graph $G$ is a \emph{cornered Petersen graph} if $G$
	contains a corner $m$ such that $G-m\simeq \peter_0$. There are 6 such
	graphs up to isomorphism. We denote them $\peter_i$, $i=1,\dots,6$, as seen
	in Figure \ref{fig:cornered_petersen_graphs}.
\end{definition}

\begin{figure}[h]
	\centering
	\begin{tabular}{ccc}
		\begin{tikzpicture}[scale=1.5, dot/.style = {circle, fill, minimum
						size=#1, inner sep=0pt, outer sep=0pt}, dot/.default =
						5pt
				% size of the circle diameter 
			]
			%   Outside
			\node [dot] (0) at (0, 1) [label={[scale=1]above left: $m'$}]{};
			\node [dot] (2) at (-0.951057, 0.309017)  {}; \node [dot] (8) at
			(-0.587785,-0.809017)  {}; \node [dot] (9) at (0.587785,-0.809017)
			{}; \node [dot] (3) at (0.951057,0.309017)   {};

			%   Inside
			\node [dot] (1) at (0,0.5*1)   {}; \node [dot] (4) at
			(0.5*-0.951057, 0.5*0.309017)   {}; \node [dot] (6) at
			(0.5*-0.587785,0.5*-0.809017) {}; \node [dot] (7) at
			(0.5*0.587785,0.5*-0.809017)  {}; \node [dot] (5) at (0.5*0.951057,
			0.5*0.309017)   {};

			% Mutant
			\node [dot] (10) at (1, 1) [label={[scale=1] above: $m$}]{}; \node
			[] (11) at (-0.6, 0.6) [label={[scale=1.3] above left:
			$\peter_1$}]{};

			\draw (0) to (1); \draw (0) to (2); \draw (0) to (3);

			\draw (1) to (6); \draw (1) to (7);

			\draw (2) to (8); \draw (2) to (4);

			\draw (4) to (5); \draw (4) to (7);

			\draw (5) to (6);

			\draw (8) to (6); \draw (8) to (9);

			\draw (9) to (7);

			\draw (9) to (3); \draw (3) to (5);

			% Mutant    
			\draw (10) to (0);
		\end{tikzpicture}
		 &
		\begin{tikzpicture}[scale=1.5, dot/.style = {circle, fill, minimum
						size=#1, inner sep=0pt, outer sep=0pt}, dot/.default =
						5pt
				% size of the circle diameter 
			]
			%   Outside
			\node [dot] (0) at (0, 1) [label={[scale=1]above left: $m'$}]{};
			\node [dot] (2) at (-0.951057, 0.309017)  {}; \node [dot] (8) at
			(-0.587785,-0.809017)  {}; \node [dot] (9) at (0.587785,-0.809017)
			{}; \node [dot] (3) at (0.951057,0.309017)   {};

			%   Inside
			\node [dot] (1) at (0,0.5*1)   {}; \node [dot] (4) at
			(0.5*-0.951057, 0.5*0.309017)   {}; \node [dot] (6) at
			(0.5*-0.587785,0.5*-0.809017)  {}; \node [dot] (7) at
			(0.5*0.587785,0.5*-0.809017)  {}; \node [dot] (5) at (0.5*0.951057,
			0.5*0.309017)   {};

			% Mutant
			\node [dot] (10) at (1, 1) [label={[scale=1] above: $m$}]{}; \node
			[] (11) at (-0.6, 0.6) [label={[scale=1.3] above left:
			$\peter_2$}]{};

			\draw (0) to (1); \draw (0) to (2); \draw (0) to (3);

			\draw (1) to (6); \draw (1) to (7);

			\draw (2) to (8); \draw (2) to (4);

			\draw (4) to (5); \draw (4) to (7);

			\draw (5) to (6);

			\draw (8) to (6); \draw (8) to (9);

			\draw (9) to (7);

			\draw (9) to (3); \draw (3) to (5);

			% Mutant    
			\draw (10) to (0); \draw (10) to (1);
		\end{tikzpicture}
		 &
		\begin{tikzpicture}[scale=1.5, dot/.style = {circle, fill, minimum
						size=#1, inner sep=0pt, outer sep=0pt}, dot/.default =
						5pt
				% size of the circle diameter 
			]
			%   Outside
			\node [dot] (0) at (0, 1) [label={[scale=1]above left: $m'$}]{};
			\node [dot] (2) at (-0.951057, 0.309017)  {}; \node [dot] (8) at
			(-0.587785,-0.809017)  {}; \node [dot] (9) at (0.587785,-0.809017)
			{}; \node [dot] (3) at (0.951057,0.309017)   {};

			%   Inside
			\node [dot] (1) at (0,0.5*1)   {}; \node [dot] (4) at
			(0.5*-0.951057, 0.5*0.309017)   {}; \node [dot] (6) at
			(0.5*-0.587785,0.5*-0.809017)  {}; \node [dot] (7) at
			(0.5*0.587785,0.5*-0.809017)  {}; \node [dot] (5) at (0.5*0.951057,
			0.5*0.309017)   {};

			% Mutant
			\node [dot] (10) at (1, 1) [label={[scale=1] above: $m$}]{}; \node
			[] (11) at (-0.6, 0.6) [label={[scale=1.3] above left:
			$\peter_3$}]{};

			\draw (0) to (1); \draw (0) to (2); \draw (0) to (3);

			\draw (1) to (6); \draw (1) to (7);

			\draw (2) to (8); \draw (2) to (4);

			\draw (4) to (5); \draw (4) to (7);

			\draw (5) to (6);

			\draw (8) to (6); \draw (8) to (9);

			\draw (9) to (7);

			\draw (9) to (3); \draw (3) to (5);

			% Mutant    
			\draw (10) to (2); \draw (10) to (3);
		\end{tikzpicture}
		\\
		\begin{tikzpicture}[scale=1.5, dot/.style = {circle, fill, minimum
						size=#1, inner sep=0pt, outer sep=0pt}, dot/.default =
						5pt
				% size of the circle diameter 
			]
			%   Outside
			\node [dot] (0) at (0, 1) [label={[scale=1]above left: $m'$}]{};
			\node [dot] (2) at (-0.951057, 0.309017)  {}; \node [dot] (8) at
			(-0.587785,-0.809017)  {}; \node [dot] (9) at (0.587785,-0.809017)
			{}; \node [dot] (3) at (0.951057,0.309017)   {};

			%   Inside
			\node [dot] (1) at (0,0.5*1)   {}; \node [dot] (4) at
			(0.5*-0.951057, 0.5*0.309017)   {}; \node [dot] (6) at
			(0.5*-0.587785,0.5*-0.809017)  {}; \node [dot] (7) at
			(0.5*0.587785,0.5*-0.809017)  {}; \node [dot] (5) at (0.5*0.951057,
			0.5*0.309017)  {};

			% Mutant
			\node [dot] (10) at (1, 1) [label={[scale=1] above: $m$}]{}; \node
			[] (11) at (-0.6, 0.6) [label={[scale=1.3] above left:
			$\peter_4$}]{};

			\draw (0) to (1); \draw (0) to (2); \draw (0) to (3);

			\draw (1) to (6); \draw (1) to (7);

			\draw (2) to (8); \draw (2) to (4);

			\draw (4) to (5); \draw (4) to (7);

			\draw (5) to (6);

			\draw (8) to (6); \draw (8) to (9);

			\draw (9) to (7);

			\draw (9) to (3); \draw (3) to (5);

			% Mutant    
			\draw (10) to (0); \draw (10) to (2); \draw (10) to (3);
		\end{tikzpicture}
		 &
		\begin{tikzpicture}[scale=1.5, dot/.style = {circle, fill, minimum
						size=#1, inner sep=0pt, outer sep=0pt}, dot/.default =
						5pt
				% size of the circle diameter 
			]
			%   Outside
			\node [dot] (0) at (0, 1) [label={[scale=1]above left: $m'$}]{};
			\node [dot] (2) at (-0.951057, 0.309017)  {}; \node [dot] (8) at
			(-0.587785,-0.809017)  {}; \node [dot] (9) at (0.587785,-0.809017)
			{}; \node [dot] (3) at (0.951057,0.309017)   {};

			%   Inside
			\node [dot] (1) at (0,0.5*1)   {}; \node [dot] (4) at
			(0.5*-0.951057, 0.5*0.309017)   {}; \node [dot] (6) at
			(0.5*-0.587785,0.5*-0.809017) {}; \node [dot] (7) at
			(0.5*0.587785,0.5*-0.809017)  {}; \node [dot] (5) at (0.5*0.951057,
			0.5*0.309017)   {};

			% Mutant
			\node [dot] (10) at (1, 1) [label={[scale=1] above: $m$}]{}; \node
			[] (11) at (-0.6, 0.6) [label={[scale=1.3] above left:
			$\peter_5$}]{};

			\draw (0) to (1); \draw (0) to (2); \draw (0) to (3);

			\draw (1) to (6); \draw (1) to (7);

			\draw (2) to (8); \draw (2) to (4);

			\draw (4) to (5); \draw (4) to (7);

			\draw (5) to (6);

			\draw (8) to (6); \draw (8) to (9);

			\draw (9) to (7);

			\draw (9) to (3); \draw (3) to (5);

			% Mutant    
			\draw (10) to (1); \draw (10) to (2); \draw (10) to (3);
		\end{tikzpicture}
		 &
		\begin{tikzpicture}[scale=1.5, dot/.style = {circle, fill, minimum
						size=#1, inner sep=0pt, outer sep=0pt}, dot/.default =
						5pt
				% size of the circle diameter 
			]
			%   Outside
			\node [dot] (0) at (0, 1) [label={[scale=1]above left: $m'$}]{};
			\node [dot] (2) at (-0.951057, 0.309017)  {}; \node [dot] (8) at
			(-0.587785,-0.809017)  {}; \node [dot] (9) at (0.587785,-0.809017)
			{}; \node [dot] (3) at (0.951057,0.309017)   {};

			%   Inside
			\node [dot] (1) at (0,0.5*1)   {}; \node [dot] (4) at
			(0.5*-0.951057, 0.5*0.309017)   {}; \node [dot] (6) at
			(0.5*-0.587785,0.5*-0.809017) {}; \node [dot] (7) at
			(0.5*0.587785,0.5*-0.809017)  {}; \node [dot] (5) at (0.5*0.951057,
			0.5*0.309017)   {};

			% Mutant
			\node [dot] (10) at (1, 1) [label={[scale=1] above: $m$}]{}; \node
			[] (11) at (-0.6, 0.6) [label={[scale=1.3] above left:
			$\peter_6$}]{};

			\draw (0) to (1); \draw (0) to (2); \draw (0) to (3);

			\draw (1) to (6); \draw (1) to (7);

			\draw (2) to (8); \draw (2) to (4);

			\draw (4) to (5); \draw (4) to (7);

			\draw (5) to (6);

			\draw (8) to (6); \draw (8) to (9);

			\draw (9) to (7);

			\draw (9) to (3); \draw (3) to (5);

			% Mutant    
			\draw (10) to (0); \draw (10) to (1); \draw (10) to (2); \draw (10)
			to (3);
		\end{tikzpicture}
	\end{tabular}

	\caption{The cornered Petersen graphs}
	\label{fig:cornered_petersen_graphs}
\end{figure}

We now solve a question raised in \cite{baird_minimum_2014}, classifying the
3-cop-win graphs on 11 vertices, albeit computationally.

\begin{proposition}\label{classification11} If $G$ is a connected graph such
	that $n=11$, then $c(G)=3$ if and only if $G\simeq \peter_i$ for some $1\leq
	i\leq 6$. Otherwise, $c(G)\leq 2$.
\end{proposition}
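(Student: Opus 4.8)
The plan is to combine the structural reductions already available (Corollaries~\ref{retractcomponents}, \ref{removecorner}, and~\ref{maxdegnlarger11}, together with Lemma~\ref{nminus5degree}) with an exhaustive computer search over the graphs that these reductions do not immediately dispose of. Since this is Proposition~\ref{classification11} with $n=11$, the first move is to bound the maximum degree: by Corollary~\ref{maxdegnlarger11}, if $\Delta > n-11 = 0$ then trivially $c(G)\leq 3$, so that gives nothing, but Lemma~\ref{nminus5degree} says that if $\Delta \geq n-5 = 6$ then $c(G)\leq 2$. Hence any graph with $c(G)=3$ on $11$ vertices has $3 \leq \Delta \leq 5$ (we also need $\delta \geq 2$, since a vertex of degree~$1$ is a corner and removing it, by Corollary~\ref{removecorner}, drops us to $n=10$ where Theorem~\ref{min3} applies and forces $c\leq 2$ because the Petersen graph has no vertex of degree~$1$; similarly $G$ must be $2$-connected-ish, or at least have no corner).

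Next I would eliminate corners: by Corollary~\ref{removecorner}, if $G$ has a corner $x$ and $c(G)=3$, then $c(G-x)=c(G)=3$, and $G-x$ is a connected graph on $10$ vertices, so by Theorem~\ref{min3} we have $G-x\simeq \peter_0$. This is exactly the definition of a cornered Petersen graph, so $G\simeq \peter_i$ for some $1\leq i\leq 6$. It therefore remains to prove that a \emph{corner-free} connected graph on $11$ vertices has $c(G)\leq 2$. This is where the computation enters: one generates, using \texttt{geng} from nauty, all connected graphs on $11$ vertices with $3\leq \delta$, $\Delta\leq 5$, and no corner (nauty can be asked to produce graphs with given degree bounds, and the corner-freeness can be checked as a cheap post-filter), and runs the cop-number algorithm described earlier in the section (for $k=1,2$) on each, verifying that all of them satisfy $c(G)\leq 2$. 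Combined with the direct verification that each of $\peter_1,\dots,\peter_6$ does satisfy $c(\peter_i)=3$ (which follows since each contains $\peter_0$ as a retract via the corner map, giving $c(\peter_i)\geq c(\peter_0)=3$, and a dominating-set / explicit-strategy argument, or simply the algorithm, gives $c(\peter_i)\leq 3$), this establishes the proposition.

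A few details need care. First, the reduction to the corner-free case must be iterated correctly: if $G$ is not corner-free we remove one corner and land on $\peter_0$ in one step, so no real iteration is needed here, but one should note that $\peter_0$ itself is corner-free (it is vertex-transitive and $3$-regular with girth $5$, so no $N(x)\subseteq N[u]$ can hold), which is what makes the six cornered Petersen graphs a complete and irredundant list. Second, the degree restriction $3\leq \delta$ comes from the corner-free hypothesis: a degree-$2$ vertex $x$ with neighbours $u,v$ is cornered by $u$ precisely when $uv\in E(G)$, so corner-free graphs can still have degree-$2$ vertices whose two neighbours are non-adjacent; one must be slightly more careful and either allow $\delta=2$ in the search or argue separately that such vertices can be handled (in fact the cleanest route is to just let nauty generate with $\delta\geq 2$, $\Delta\leq 5$ and filter). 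Third, the counts produced by \texttt{geng} and by the cop-number computation should be reported and cross-checked, as is done elsewhere in the paper, against the totals in \cite{baird_minimum_2014} where they overlap.

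The main obstacle is not mathematical but computational: the number of connected graphs on $11$ vertices is large (on the order of $10^9$), so a naive search is infeasible, and the whole point of the structural preamble is to cut this down. The degree bounds $\delta\geq 2$, $\Delta\leq 5$ already restrict to a much smaller family, and one can further prune using the corner reduction during or before generation; the paper's code repository \cite{turcotte_code_2020} is where the precise generation counts and timings live. The delicate point to get right in the write-up is making sure the case split (corner present $\Rightarrow$ contains $\peter_0$ as a $10$-vertex retract $\Rightarrow$ one of the $\peter_i$; corner absent $\Rightarrow$ in the searched family $\Rightarrow$ $c\leq 2$) is exhaustive and that the generated family genuinely contains every corner-free candidate, i.e. that the degree bounds used for \texttt{geng} are justified \emph{before} assuming corner-freeness, which is why Lemma~\ref{nminus5degree} (for the upper bound on $\Delta$) must be invoked first.
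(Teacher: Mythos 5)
Your proof is correct and rests on the same two pillars as the paper's: Lemma~\ref{nminus5degree} to restrict to $\Delta\leq 5$, and an exhaustive run of the cop-number algorithm for $k=1,2$ over the surviving graphs. The difference is in how the six exceptional graphs get identified. The paper simply generates \emph{all} connected graphs on $11$ vertices with $\Delta\leq 5$ (about $2.15\times 10^{7}$ of them), computes their cop numbers, and observes that the six graphs with $c\geq 3$ are exactly the cornered Petersen graphs, whose $3$-cop-win-ness is checked separately via Corollary~\ref{removecorner} and Theorem~\ref{min3}. You instead split on whether $G$ has a corner: Corollary~\ref{removecorner} together with Theorem~\ref{min3} forces $G-x\simeq\peter_0$ whenever $x$ is a corner of a $3$-cop-win $G$ on $11$ vertices (note $G-x$ remains connected, since any path through $x$ reroutes through the cornering vertex), so $G\simeq\peter_i$ by definition, and the computation then only has to certify $c\leq 2$ on the corner-free graphs with $\delta\geq 2$ and $\Delta\leq 5$. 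This buys a smaller search space and turns the identification of the exceptions into a theorem rather than an empirical output of the code; the paper's version buys a simpler generation step, since \texttt{geng} handles degree bounds natively but corner-freeness needs a post-filter. One parenthetical of yours is garbled: a degree-$1$ vertex does \emph{not} force $c\leq 2$ (the graph $\peter_1$ has one and is $3$-cop-win); by your own corner argument it forces $G\simeq\peter_1$. The slip is harmless, because the general corner reduction two sentences later handles that case correctly, but it should be fixed in a final write-up.
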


\begin{proof}
	Firstly, it is clear by Theorem \ref{min3} and Corollary \ref{removecorner}
	that the cornered Petersen graphs are 3-cop-win. We would like to show that
	these graphs are the only graphs on 11 vertices with cop number 3, and that
	all other graphs have cop number at most 2.

	By Lemma \ref{nminus5degree}, we may only consider graphs such that
		$\Delta\leq n-6=5$. We generate all connected graphs on 11 vertices such
		that $\Delta\leq 5$ and classify each graph according to its cop number
		(by running the algorithm for $k=1,2$). The results are presented in
		Table \ref{table:n11-14} (the counts are up to isomorphism). The 6
		graphs found are the graphs $\peter_i$ for $i=1,\dots,6$, which
		concludes the proof.
\end{proof}

This is an interesting phenomenon: the unique 3-cop-win graph on 10 vertices is
a retract of all the $3$-cop-win graphs on 11 vertices. This behaviour does not
occur for the 2-cop-win graphs: the minimum 2-cop-win graph is the $4$-cycle, on
which the 5-cycle does not retract. Although we will not have any answer for
this question in this article, it would be interesting to know whether in
general (even for 4-cop-win graphs only), the $k$-cop-win graphs on $M_k+1$
vertices can be retracted on $k$-cop-win graph(s) on $M_k$ vertices.

In Section \ref{remainingcasessection}, we will also need the following lemma.

\begin{lemma}\label{computationn1213} There exist
	\begin{itemize}
		\item $80$ connected graphs $G$ on $12$ vertices with $\Delta\leq 4$,
		\item $173$ connected graphs $G$ on $12$ vertices with $\Delta\leq 5$,
		\item $1105$ connected graphs $G$ on $13$ vertices with $\Delta\leq 4$,
		and
		\item $16523$ connected graphs $G$ on $14$ vertices with $\Delta\leq 4$
	\end{itemize}
	such that $c(G) = 3$. All other connected graphs $G$ considered with these
	orders and maximum degrees satisfy $c(G)\leq 2$.
\end{lemma}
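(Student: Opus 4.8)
The proof will be computational, following the same template as Proposition~\ref{classification11}. The plan is first to reduce the determination of the cop number to running the algorithm only for $k=1$ and $k=2$. Since Hosseini proved $M_4\geq 16$ in \cite{hosseini_game_2018}, and $M_k<M_{k+1}$ by \cite{hosseini_note_2018}, every connected graph on at most $15$ vertices has cop number at most $3$; in particular every connected graph on $12$, $13$ or $14$ vertices does. Hence for each such graph $G$ it suffices to run the algorithm with $k=2$: if it reports $c(G)\leq 2$ we are done, and if it reports $c(G)>2$ then necessarily $c(G)=3$.

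Next I would, for each of the four pairs (order, maximum degree) appearing in the statement --- namely $(12,\Delta\leq 4)$, $(12,\Delta\leq 5)$, $(13,\Delta\leq 4)$ and $(14,\Delta\leq 4)$ --- use \texttt{geng} from the nauty/Traces package~\cite{mckay_practical_2014} to enumerate exactly one representative of each isomorphism class of connected graphs of that order satisfying the prescribed degree bound. The built-in degree restriction of \texttt{geng} keeps these lists tractable even though the total number of connected graphs on $14$ vertices is enormous. For every generated graph I would then call the cop-number routine with $k=2$, tally those for which $c(G)>2$, and check that the tallies equal the claimed values $80$, $173$, $1105$ and $16523$; all remaining graphs satisfy $c(G)\leq 2$ by construction. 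Because \texttt{geng} outputs isomorph-free lists, the resulting counts are automatically up to isomorphism.

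The main obstacle is one of scale and of trust in the implementation rather than of mathematical difficulty: the case $n=14$, $\Delta\leq 4$ requires enumerating and testing a large number of graphs, so the computation should be split into small, independently verifiable parts (as elsewhere in the paper) with all intermediate data recorded. Correctness of the cop-number code is not re-established here; it rests on the validation already described, namely agreement with~\cite{baird_minimum_2014} on graphs up to $10$ vertices, cross-checking the cop-win counts against a dismantling algorithm, and testing on cage graphs of known cop number. The code, the generated graphs, and the precise counts at each step are available at~\cite{turcotte_code_2020}.
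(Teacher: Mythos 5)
Your proposal is correct and follows essentially the same route as the paper: exhaustive isomorph-free generation with \texttt{geng} under the stated degree bounds, running the cop-number algorithm for $k\leq 2$, and concluding $c(G)=3$ for the failures because every connected graph on at most $14$ vertices has cop number at most $3$ (the paper justifies this via Corollary~\ref{maxdegnlarger11} for $\Delta\geq 4$ together with the $\Delta\leq 3$ computations, and notes the $M_4\geq 16$ argument you use as an equivalent alternative). The only cosmetic difference is that the paper obtains the $(12,\Delta\leq 4)$ count by filtering the 3-cop-win graphs from the $(12,\Delta\leq 5)$ run rather than generating that list separately.
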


\begin{proof}
	Firstly, all graphs on at most 14 vertices have cop number at most 3. For
	cases where $\Delta \geq 4$, this is a direct consequence of Corollary
	\ref{maxdegnlarger11}. For $\Delta=2$, the graph is either a path or a
	cycle. For $\Delta=3$, see the results of Table \ref{table:deg3}. This is
	also a direct consequence of knowing that $M_4 \geq 16$, see
	\cite{hosseini_game_2018}.

	We generate, up to isomorphism, all connected graphs on $12$ vertices such
	that $\Delta\leq 5$ and on $13$ and $14$ vertices such that $\Delta\leq 4$.
	We classify these graphs according to their cop number (by running the
	algorithm for $k=1,2$). Afterwards, we also count which of the graphs on 12
	vertices with $\Delta\leq 5$ and $c(G)\geq 3$ are such that $\Delta\leq 4$.
	The results are in Table \ref{table:n11-14}.
\end{proof}

\begin{table}[h]
	\begin{tabular}{|c|c|c|c|c|c|}
		\cline{4-6}
		\multicolumn{3}{c}{} & \multicolumn{3}{|c|}{Cop number}
		\\
		\hline
		\rowcolor{gray!60}
		$n$                  & Degree bounds                    & Number of
		graphs & $1$    & $2$       & $\geq 3$ \\ \hline
		$11$                 & $\Delta\leq 5$                   & 21503340
		& 69310  & 21434024  & 6        \\ \hline
		$12$                 & $\Delta\leq 4$                   & -
		& -      & -         & 80       \\ \hline
		$12$                 & $\Delta\leq 5$                   & 471142472
		& 295377 & 470846922 & 173      \\ \hline
		$13$                 & $\Delta\leq 4$                   & 68531618
		& 73876  & 68456637  & 1105     \\ \hline
		$14$                 & $\Delta\leq 4$                   & 748592936
		& 247022 & 748329391 & 16523    \\ \hline
	\end{tabular}
	\caption{Cop number breakdown for connected graphs on 11-14 vertices with some degree restrictions}
	\label{table:n11-14}
\end{table}

While the counts are presented to summarize the results, the precise 3-cop-win
graphs are the focus of our attention as we use them in the following sections.

We now discuss some possible improvements to our approach. This part is not
essential to prove the main result, but may be helpful in motivating the methods
of Section \ref{remainingcasessection}. To achieve these results, we
exhaustively computed the cop number of every connected graph that satisfied our
maximum degree constraints. Since we proceeded by exhaustion, the run time of
these computations is somewhat long due to the high number of graphs, especially
in the case of $\Delta = 14$. We note that a more clever approach might yield
faster calculation time.

The first and most obvious improvement would be to only look at graphs with a
minimum degree of at least 2, which can reduce the number of graphs to consider
by up to around 50\%. If we already know the 3-cop-win graphs on one fewer
vertices, we can then just consider all possible ways to attach an extra vertex
of degree 1 to those graphs. Using this method, we can get all connected
3-cop-win graphs of a given order.

However, this method is still an exhaustive search. A more clever approach would
be to consider every (not necessarily connected) 2-cop-win graph $G'$ on
$n-\Delta-1$ vertices, add a vertex $u$ with $\Delta$ neighbours new neighbours
and consider each way of adding edges between $N(u)$ and $G'$ (up to
isomorphism), then checking which of these graphs are 3-cop-win. We would
recommend the interested reader try this approach.

A more refined approach of this would be to use the algorithm of Section
\ref{remainingcasessection} to build candidate 3-cop-win graphs, by merging
2-cop-win graphs on fewer vertices. We will see later that although this method
can reduce significantly the computation time, in practice it requires some
effort to make sure all the possible cases are considered. For the size of
graphs we are considering, this approach does not appear worthwhile.

%%%%%%%%%%%%%%%%%%%%%%%%%%%%%%%%%%%%%%%%%%%%%%%%%%%%%%%%%%%%%%%%%%%
%%%%%%%%%%%%%%%%%%%%%%%%%%%%%%%%%%%%%%%%%%%%%%%%%%%%%%%%%%%%%%%%%%%
%%%%%%%%%%%%%%%%%%%%%%%%%%%%%%%%%%%%%%%%%%%%%%%%%%%%%%%%%%%%%%%%%%%
%%%%%%%%%%%%%%%%%%%%%%%%%%%%%%%%%%%%%%%%%%%%%%%%%%%%%%%%%%%%%%%%%%%
%%%%%%%%%%%%%%%%%%%%%%%%%%%%%%%%%%%%%%%%%%%%%%%%%%%%%%%%%%%%%%%%%%%

\section{Graphs with high maximum degree}\label{highdegreesection} In this
section, we consider the cop number of graphs $G$ such that $\Delta=n-11$ or
$\Delta=n-12$. We start by investigating some properties of the game of cops and
robbers on the Petersen graph (denoted by $\peter_0$) and its variants, the
cornered Petersen graphs (denoted by $\peter_i$ for $1\leq i\leq 6$). Many of
the arguments in this section are extremely simple once visualized. For this
reason, we have provided many figures representing parts of the proofs. Of
course, we cannot provide figures for every case, so we encourage the reader
draw out the graphs while following the proofs, especially regarding the
movements of the cops and the robber.

By considering the Petersen graph as the Kneser graph $KG_{5,2}$
\cite{baez_petersen_2015}, one easily gets the following well-known result
(although maybe not with this precise formulation), an illustration of the fact
that the Petersen graph is highly transitive.

\begin{definition}
	\label{definition:strong_stable_set}
	We say a set of 3 vertices $\{x,y,z\}$ is a \emph{strong stable set} if it
	is a stable set and if $N(x)\cap N(y)\cap N(z)=\emptyset$.
\end{definition}

\begin{lemma}
	\label{petersenautomorphisms}
	\item
	\begin{enumerate}[label=(\alph*)]
		\item If $\{x,y,z\}$ and $\{x',y',z'\}$ are strong stable sets of
		      $\peter_0$, then there exists an automorphism $\phi_1$ of
		      $\peter_0$ such that $\phi_1(x)=x'$, $\phi_1(y)=y'$ and
		      $\phi_1(z)=z'$.
		\item If $ab$ and $a'b'$ are two edges of $\peter_0$, then there exists
		      an automorphism $\phi_2$ of $\peter_0$ such that $\phi_2(a)=a'$,
		      $\phi_2(b)=b'$. This property is known as being arc-transitive.
	\end{enumerate}
\end{lemma}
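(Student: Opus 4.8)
The plan is to work with the standard description of $\peter_0$ as the Kneser graph $KG_{5,2}$: identify $V(\peter_0)$ with the set $\binom{[5]}{2}$ of two-element subsets of $[5]=\{1,2,3,4,5\}$, two vertices being adjacent exactly when the corresponding subsets are disjoint. Every permutation $\sigma\in S_5$ then induces an automorphism of $\peter_0$ (since $A\cap B=\emptyset$ if and only if $\sigma(A)\cap\sigma(B)=\emptyset$), so it suffices to exhibit, in each case, a suitable $\sigma$; I will not even need the fact that $\mathrm{Aut}(\peter_0)$ equals $S_5$.

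For part (b), I would first observe that an edge $ab$ of $\peter_0$ amounts to a pair of disjoint two-subsets $a,b$; then $a\cup b$ has four elements, so $c:=[5]\setminus(a\cup b)$ is a single element and $\{a,b,\{c\}\}$ is a partition of $[5]$. Given a second edge $a'b'$ with $c':=[5]\setminus(a'\cup b')$, I choose bijections $a\to a'$ and $b\to b'$ arbitrarily and set $\sigma(c)=c'$; this defines a permutation $\sigma\in S_5$ whose induced automorphism $\phi_2$ satisfies $\phi_2(a)=a'$ and $\phi_2(b)=b'$.

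The substantive part of (a) is to identify the strong stable sets. A stable set $\{x,y,z\}$ consists of three two-subsets that pairwise intersect, hence pairwise meet in exactly one element, and a short case analysis shows there are only two possibilities: a \emph{star}, $x=\{i,j\},\ y=\{i,k\},\ z=\{i,l\}$ with a common element $i$, or a \emph{triangle}, $\{x,y,z\}=\{\{p,q\},\{q,r\},\{p,r\}\}$. If $\{x,y,z\}$ is a triangle and $s,t$ are the two elements of $[5]$ outside $\{p,q,r\}$, then $\{s,t\}$ is disjoint from each of $x,y,z$, so $N(x)\cap N(y)\cap N(z)\neq\emptyset$ and the set is not strong. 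If $\{x,y,z\}$ is a star, a common neighbour would be a two-subset avoiding $i,j,k,l$, which is impossible; hence the strong stable sets are precisely the stars.

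To finish (a), take strong stable sets $x=\{i,j\},\ y=\{i,k\},\ z=\{i,l\}$ and $x'=\{i',j'\},\ y'=\{i',k'\},\ z'=\{i',l'\}$. Any automorphism must send the unique common element of $x,y,z$ to the unique common element of their images, so we are forced to take $\sigma(i)=i'$; then $\sigma(j)=j'$, $\sigma(k)=k'$, $\sigma(l)=l'$ are forced, and $\sigma$ must send the remaining element of $[5]$ to the remaining one. This $\sigma$ is a well-defined permutation, and its induced automorphism $\phi_1$ satisfies $\phi_1(x)=x'$, $\phi_1(y)=y'$, $\phi_1(z)=z'$. I do not expect a genuine obstacle here: once the Kneser model is set up everything reduces to bookkeeping, and the only step requiring a little care is the star/triangle dichotomy together with checking that it is exactly the stars that are strong.
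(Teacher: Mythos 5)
Your proof is correct and follows exactly the route the paper indicates: the paper states this lemma without proof, remarking only that it follows easily by viewing $\peter_0$ as the Kneser graph $KG_{5,2}$, and your argument supplies precisely those details (the star/triangle dichotomy for stable triples, the identification of strong stable sets with stars, and the explicit permutations of $[5]$). No gaps.
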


We use the labels $m$, $m'$ on the graphs $\peter_i$ for $i=1,\dots,6$, as shown
in Figure \ref{fig:cornered_petersen_graphs}. In particular, for each of these
graphs, $\peter_i-m\simeq \peter_0$. We also see that $m'$ always corners $m$,
which will be very useful. We also note that as $m,m'\notin V(\peter_0)$, we can
say that $\peter_0-m=\peter_0-m'=\peter_0$.

As stated in Theorem \ref{min3}, we know that $c(\peter_0)=3$. In the next two
lemmas, we show that although two cops do not have a winning strategy, they have
a lot of power as to which positions can be reached. We will later use these
lemmas as parts of strategies on graphs containing the Petersen or the cornered
Petersen graph. These lemmas would be very easy to establish computationally,
but we consider a proof to be worthwhile.

\begin{lemma}\label{chasinglemma1} Let $0\leq i\leq 6$. If $\{x,y,z\}$ is a
	strong stable set of $\peter_i-m$, then there exists a strategy for 2 cops
	on $\peter_i$ to reach the following situation.
	\begin{enumerate}
		\item The robber is on $x$, except  in the case $x=m'$ and
		      $i\in\{5,6\}$, where the robber is either on $m'$ or $m$.
		\item The cops are on $y$ and $z$.
		\item It is the cops' turn.
	\end{enumerate}
\end{lemma}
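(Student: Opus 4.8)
The plan is to prove this lemma by a combination of structural case analysis on $i$ and explicit cop strategies that exploit the transitivity properties of the Petersen graph established in Lemma~\ref{petersenautomorphisms}. The key observation is that since $\peter_i - m \simeq \peter_0$ for all $i$, and $m'$ corners $m$, a cop positioned to capture on $\peter_0$ can be "lifted" to a strategy on $\peter_i$ with only minor adjustments near $m$ and $m'$.

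\bigskip

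First I would handle the base case $i=0$, where $\peter_0 - m = \peter_0$. Fix a strong stable set $\{x,y,z\}$. By Lemma~\ref{petersenautomorphisms}(a), since the Petersen graph is vertex-transitive and all strong stable sets are equivalent under $\mathrm{Aut}(\peter_0)$, it suffices to verify the claim for \emph{one} convenient choice of $\{x,y,z\}$ — say three pairwise-nonadjacent vertices with empty common neighbourhood chosen to make the chase transparent. I would then exhibit an explicit finite sequence of cop moves: start the two cops anywhere, and argue that using the structure of the Petersen graph (every vertex has degree $3$, girth $5$, so the robber's escape routes are very constrained) the two cops can herd the robber so that it is forced onto a prescribed vertex $x$ while the cops sit on $y$ and $z$. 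The cleanest way is probably to first have one cop "shadow" the robber onto a target edge or pair, then use the second cop together with the automorphism $\phi_1$ to rotate the configuration into the desired one. I would spell this out with the aid of a figure showing the vertex labels of $\peter_0$ from Figure~\ref{fig:petersenrobertson_graph}, tracking positions turn by turn.

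\bigskip

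Next I would treat $i \in \{1,\dots,6\}$. Here the extra vertex $m$ is a corner (cornered by $m'$), with $1 \le d(m) \le 4$ depending on $i$. The strategy is: run essentially the $i=0$ strategy on the copy $\peter_0 \simeq \peter_i - m$, but whenever the robber uses the vertex $m$, treat it as if the robber were on $m'$ (which dominates $N(m)$, so no escape is lost by this identification — this is exactly the retract $f$ collapsing $m$ to $m'$ from Corollary~\ref{removecorner}). Pulling back the $\peter_0$-strategy through this retraction gives a strategy on $\peter_i$ forcing the robber onto $x$ and the cops onto $y,z$ — \emph{except} that when $x = m'$ the robber may, on its final move, slip from $m'$ to $m$ (or vice versa), since both project to $m'$. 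This is precisely the exceptional clause in statement~(1). I would note that for $i \in \{1,2,3,4\}$ the extra flexibility still pins the robber to $m'$ exactly (one can check that $m$ is then reachable-from-$m'$ but the cops on $y,z$ can be chosen to also block $m$, or the geometry forbids it), whereas for $i \in \{5,6\}$ the vertex $m$ genuinely cannot be excluded, hence the split in the statement. This requires checking the six graphs $\peter_1,\dots,\peter_6$ individually, which is finite and routine but must be done carefully; alternatively one invokes that this is "easily established computationally" as the authors remark, but following their stated preference I would give the structural argument.

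\bigskip

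The main obstacle I anticipate is the $i=0$ case: producing an actual winning-herding sequence for two cops on the Petersen graph that ends in the \emph{specific} configuration (robber on $x$, cops on $y$ and $z$, cops to move), rather than just "somewhere useful". Two cops cannot win on $\peter_0$, so the argument must carefully use that we only want to reach a transient configuration, not capture — the robber is cooperating with nothing, so I must show \emph{every} robber response can be answered. The trick will be to reduce to a canonical sub-configuration via arc-transitivity (Lemma~\ref{petersenautomorphisms}(b)): first force the robber and one cop onto a fixed edge $ab$ with the robber on $a$ and cop on $b$ (a short chase argument), then the remaining cop, which is "free", walks to the correct vertex while the first cop maintains the threat; finally apply $\phi_1$ or $\phi_2$ to convert the canonical target into the requested $\{x,y,z\}$. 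Getting the bookkeeping of "which cop threatens what" right through these moves, and confirming the robber is never accidentally forced off $x$ before the cops are in place, is where the real care is needed; a figure tracking three or four turns of play should make it verifiable.
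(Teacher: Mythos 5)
Your overall architecture matches the paper's: settle $\peter_0$ first by reducing to one canonical strong stable set via Lemma~\ref{petersenautomorphisms}, then lift the strategy to $\peter_i$ using the fact that $m$ is cornered by $m'$, with the twin cases $i\in\{5,6\}$ handled by a shadow/retraction argument that explains the exceptional clause. However, the proposal has a genuine gap exactly where you yourself locate the difficulty: the $i=0$ case. You never produce the herding strategy, and the mechanism you sketch for it --- pin the robber on a fixed edge $ab$ with one cop on $b$, then let the ``free'' cop walk to its target while the first cop ``maintains the threat'' --- does not work on the Petersen graph. The graph is cubic and triangle-free, so a cop adjacent to the robber covers only one of the robber's three neighbours; the robber always has two unguarded exits and simply runs. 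A single cop cannot hold the robber on a prescribed vertex for even one round, so there is no ``free'' cop, and nothing in your sketch forces the robber to be on the \emph{specific} vertex $x$ at the \emph{specific} moment the second cop arrives. The paper's actual device is different and uses both cops at every step: placing them on two consecutive inner vertices $\beta_j,\beta_{j+1}$ simultaneously dominates the whole inner $5$-cycle, attacks $\alpha_j$, and blocks $\alpha_{j+1}$, leaving $\alpha_{j-1}$ as the robber's unique move; iterating marches the robber unidirectionally around the outer cycle until it lands on $\alpha_1$ with the cops on $\beta_2,\beta_3$. Some such global two-cop configuration argument is the irreducible content of the lemma and is missing from your write-up.

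The lift to $i\in\{1,\dots,4\}$ is also under-argued. Your claim that ``the cops on $y,z$ can be chosen to also block $m$, or the geometry forbids it'' is not a proof, and the pulled-back retraction strategy only controls the robber's \emph{image}, so it does not by itself rule out the robber finishing on $m$ when $x=m'$ and $i\le 4$ (where the statement demands the robber be on $m'$ itself). The paper instead shows the robber never profitably enters $m$ at all: in the $\peter_0$ chase a cop is adjacent to the robber before each robber move, so if the robber steps to $m$ that cop moves next to $m$, leaving at most one further escape route from $m$, which the other cop reaches (diameter $2$) --- the robber would be trapped, hence avoids $m$, and the unmodified $\peter_0$ strategy goes through. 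You would need an argument of this kind, checked against the four graphs $\peter_1,\dots,\peter_4$, to close the case.
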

\begin{proof}
	Let us first consider the case of $\peter_0$. Consider the labelling of
	$\peter_0$ with $\alpha_i$ and $\beta_i$ as shown in Figure
	\ref{fig:petersenrobertson_graph}. For any $j,j'$, observe that if two cops
	are on $\beta_{j}, \beta_{j+1}$ (working in modulo 5), they can directly
	move to the pair $\beta_{j'},\beta_{j'+1}$.

	Without loss of generality, we may consider that $x=\alpha_1, y=\beta_2,
		z=\beta_3$, as for all other strong stable sets we can apply the
		automorphism of Lemma \ref{petersenautomorphisms}(a). For some $k$, we
		start the game with two cops on $\beta_k$ and $\beta_{k+1}$ (modulo 5).
		The robber must choose one of the $\alpha_\ell$ as its starting vertex.
		Notice that if the robber is on $\alpha_j$, moving the cops to $\beta_j$
		and $\beta_{j+1}$ forces the robber to move to $\alpha_{j-1}$. By
		repeating this strategy, the cops can essentially make the robber turn
		in circles on the outer 5-cycle of $\peter_0$. At the end of every cops'
		turn (except the first), the robber is on $\alpha_j$ and the cops are on
		$\beta_j$ and $\beta_{j+1}$, for some $j$. The cops repeat until the
		robber is on $\alpha_1$ (unless the robber is already on $\alpha_1$ on
		the first turn, in which case the cops make the robber do a full circle
		around the graph); it is now the cops' turn and the game is in the
		desired situation. Observe that this strategy works for any initial
		choice of $k$. This will be useful later, as for any vertex $w\in
		\peter_0$, we may choose an initial position such that one of the cops
		is in $N[w]$. We call this the \emph{chasing strategy} for the Petersen
		graph. An example is illustrated in Figure \ref{fig:chasing_strategy}.
		Even though this might be a very simple idea, this strategy is critical
		for the rest of this section as it will enable us to construct more
		complicated strategies.

	\begin{figure}[h]
		\begin{subfigure}{.25\textwidth}
			\begin{tikzpicture}[scale=1.5, dot/.style = {circle, fill, minimum
							size=#1, inner sep=0pt, outer sep=0pt}, dot/.default
							= 5pt
					% size of the circle diameter 
				]

				%   Outside
				\node [dot] (0) at (0, 1) {}; \node [dot] (2) at (-0.951057,
				0.309017) [label={[scale=1]below left: }] {}; \node [dot,red]
				(8) at (-0.587785,-0.809017) [label={[scale=1,red]below left:
				$r$}]{}; \node [dot] (9) at (0.587785,-0.809017) {}; \node [dot]
				(3) at (0.951057,0.309017) {};

				%   Inside
				\node [dot,blue] (1) at (0,0.5*1) [label={[scale=1,blue]above
				left: $c$}] {}; \node [dot] (4) at (0.5*-0.951057, 0.5*0.309017)
				{}; \node [dot] (6) at (0.5*-0.587785,0.5*-0.809017) {}; \node
				[dot] (7) at (0.5*0.587785,0.5*-0.809017)  {}; \node [dot,blue]
				(5) at (0.5*0.951057, 0.5*0.309017)  [label={[scale=1,blue]below
				right: $c$}] {};

				\draw (0) to (1); \draw (0) to (2); \draw (0) to (3);

				\draw (1) to (6); \draw (1) to (7);

				\draw (2) to (8); \draw (2) to (4);

				\draw (4) to (5); \draw (4) to (7);

				\draw (5) to (6);

				\draw (8) to (6); \draw (8) to (9);

				\draw (9) to (7);

				\draw (9) to (3); \draw (3) to (5);
			\end{tikzpicture}
			\caption{Initial position}
		\end{subfigure}
		\begin{subfigure}{.25\textwidth}
			\begin{tikzpicture}[scale=1.5, dot/.style = {circle, fill, minimum
							size=#1, inner sep=0pt, outer sep=0pt}, dot/.default
							= 5pt
					% size of the circle diameter 
				]

				%   Outside
				\node [dot] (0) at (0, 1) {}; \node [dot] (2) at (-0.951057,
				0.309017) [label={[scale=1]below left: }] {}; \node [dot,red]
				(8) at (-0.587785,-0.809017) [label={[scale=1,red]below left:
				$r$}]{}; \node [dot] (9) at (0.587785,-0.809017) {}; \node [dot]
				(3) at (0.951057,0.309017) {};

				%   Inside
				\node [dot] (1) at (0,0.5*1)   {}; \node [dot] (4) at
				(0.5*-0.951057, 0.5*0.309017)   {}; \node [dot,blue] (6) at
				(0.5*-0.587785,0.5*-0.809017)[label={[scale=1,blue]above left:
				$c$}] {}; \node [dot,blue] (7) at
				(0.5*0.587785,0.5*-0.809017)[label={[scale=1,blue]above right:
				$c$}]  {}; \node [dot] (5) at (0.5*0.951057, 0.5*0.309017)   {};

				\draw (0) to (1); \draw (0) to (2); \draw (0) to (3);

				\draw (1) to (6); \draw (1) to (7);

				\draw (2) to (8); \draw (2) to (4);

				\draw (4) to (5); \draw (4) to (7);

				\draw (5) to (6);

				\draw (8) to (6); \draw (8) to (9);

				\draw (9) to (7);

				\draw (9) to (3); \draw (3) to (5);

				\draw [->, line width=0.3mm, applegreen, bend left] (1) to (7);
					\draw [->, line width=0.3mm, applegreen, bend right] (5) to
					(6);

			\end{tikzpicture}
			\caption{After 1 cop turn}
		\end{subfigure}
		\begin{subfigure}{.25\textwidth}
			\begin{tikzpicture}[scale=1.5, dot/.style = {circle, fill, minimum
							size=#1, inner sep=0pt, outer sep=0pt}, dot/.default
							= 5pt
					% size of the circle diameter 
				]

				%   Outside
				\node [dot] (0) at (0, 1) {}; \node [dot,red] (2) at (-0.951057,
				0.309017) [label={[scale=1,red]above left: $r$}] {}; \node [dot]
				(8) at (-0.587785,-0.809017)  {}; \node [dot] (9) at
				(0.587785,-0.809017)[label={[scale=1,red]below left:
				$\phantom{r}$}] {}; \node [dot] (3) at (0.951057,0.309017) {};

				%   Inside
				\node [dot] (1) at (0,0.5*1)   {}; \node [dot] (4) at
				(0.5*-0.951057, 0.5*0.309017)   {}; \node [dot,blue] (6) at
				(0.5*-0.587785,0.5*-0.809017)[label={[scale=1,blue]above left:
				$c$}] {}; \node [dot,blue] (7) at
				(0.5*0.587785,0.5*-0.809017)[label={[scale=1,blue]above right:
				$c$}]  {}; \node [dot] (5) at (0.5*0.951057, 0.5*0.309017)   {};

				\draw (0) to (1); \draw (0) to (2); \draw (0) to (3);

				\draw (1) to (6); \draw (1) to (7);

				\draw (2) to (8); \draw (2) to (4);

				\draw (4) to (5); \draw (4) to (7);

				\draw (5) to (6);

				\draw (8) to (6); \draw (8) to (9);

				\draw (9) to (7);

				\draw (9) to (3); \draw (3) to (5);

				\draw [->, line width=0.3mm, applegreen, bend left] (8) to (2);

			\end{tikzpicture}
			\caption{After 1 robber turn}
		\end{subfigure}
		\begin{subfigure}{.25\textwidth}
			\begin{tikzpicture}[scale=1.5, dot/.style = {circle, fill, minimum
							size=#1, inner sep=0pt, outer sep=0pt}, dot/.default
							= 5pt
					% size of the circle diameter 
				]

				%   Outside
				\node [dot] (0) at (0, 1) [label={[scale=1,red]above left:
				$\phantom{r}$}]{}; \node [dot,red] (2) at (-0.951057, 0.309017)
				[label={[scale=1,red]above left: $r$}] {}; \node [dot] (8) at
				(-0.587785,-0.809017)  {}; \node [dot] (9) at
				(0.587785,-0.809017)[label={[scale=1,red]below left:
				$\phantom{r}$}] {}; \node [dot] (3) at (0.951057,0.309017) {};

				%   Inside
				\node [dot] (1) at (0,0.5*1)   {}; \node [dot,blue] (4) at
				(0.5*-0.951057, 0.5*0.309017) [label={[scale=1,blue]above right:
				$c$}]  {}; \node [dot,blue] (6) at
				(0.5*-0.587785,0.5*-0.809017)[label={[scale=1,blue]above left:
				$c$}] {}; \node [dot] (7) at (0.5*0.587785,0.5*-0.809017)  {};
				\node [dot] (5) at (0.5*0.951057, 0.5*0.309017)   {};

				\draw (0) to (1); \draw (0) to (2); \draw (0) to (3);

				\draw (1) to (6); \draw (1) to (7);

				\draw (2) to (8); \draw (2) to (4);

				\draw (4) to (5); \draw (4) to (7);

				\draw (5) to (6);

				\draw (8) to (6); \draw (8) to (9);

				\draw (9) to (7);

				\draw (9) to (3); \draw (3) to (5);

				\draw [->, line width=0.3mm, applegreen, bend left] (7) to (4);

			\end{tikzpicture}
			\caption{After 2 cop turns}
		\end{subfigure}
		\begin{subfigure}{.25\textwidth}
			\begin{tikzpicture}[scale=1.5, dot/.style = {circle, fill, minimum
							size=#1, inner sep=0pt, outer sep=0pt}, dot/.default
							= 5pt
					% size of the circle diameter 
				]

				%   Outside
				\node [dot,red] (0) at (0, 1)[label={[scale=1,red]above left:
				$r$}] {}; \node [dot] (2) at (-0.951057, 0.309017)  {}; \node
				[dot] (8) at (-0.587785,-0.809017) {}; \node [dot] (9) at
				(0.587785,-0.809017)[label={[scale=1,red]below left:
				$\phantom{r}$}] {}; \node [dot] (3) at (0.951057,0.309017) {};

				%   Inside
				\node [dot] (1) at (0,0.5*1)   {}; \node [dot,blue] (4) at
				(0.5*-0.951057, 0.5*0.309017) [label={[scale=1,blue]above right:
				$c$}]  {}; \node [dot,blue] (6) at
				(0.5*-0.587785,0.5*-0.809017)[label={[scale=1,blue]above left:
				$c$}] {}; \node [dot] (7) at (0.5*0.587785,0.5*-0.809017)  {};
				\node [dot] (5) at (0.5*0.951057, 0.5*0.309017)   {};

				\draw (0) to (1); \draw (0) to (2); \draw (0) to (3);

				\draw (1) to (6); \draw (1) to (7);

				\draw (2) to (8); \draw (2) to (4);

				\draw (4) to (5); \draw (4) to (7);

				\draw (5) to (6);

				\draw (8) to (6); \draw (8) to (9);

				\draw (9) to (7);

				\draw (9) to (3); \draw (3) to (5);

				\draw [->, line width=0.3mm, applegreen, bend left] (2) to (0);
			\end{tikzpicture}
			\caption{Desired position}
		\end{subfigure}

		\caption{Typical application of the chasing strategy on the Petersen graph.}
		\label{fig:chasing_strategy}
	\end{figure}

	We now consider the cases of $\peter_5$ and $\peter_6$. In both cases,
	observe that $m$ and $m'$ are completely indistinguishable: $N(m)=N(m')$. It
	is then easily seen that the strategy for 2 cops on $\peter_5$ or $\peter_6$
	will be the same as the strategy developed above for $\peter_0$, except that
	the robber may choose to go to either $m$ or $m'$. We apply the strategy for
	$\peter_0$ by considering the robber to be on $m'$ whenever it is actually
	on $m$. This is essentially a simplified version of the well-known argument
	used to prove, in particular, Theorem \ref{generalretract}.

	Finally, we consider the cases $\peter_i$, $i\in \{1,\dots,4\}$. Our goal is
	to apply the strategy of $\peter_0$ developed above, with only slight
	modifications. Using that strategy, we choose initial positions for the cops
	in $\peter_i-m$ such that one of the cops is in $N[m']$ (it is described
	above why this is possible). If the robber chooses $m$ as an initial
	position, this cop may then move to $m'$. As $m'$ corners $m$, the robber
	cannot move without being captured. The other cop may then, within a few
	turns, capture the robber. Thus, the robber will choose an initial vertex in
	$\peter_i-m$. Now, as long as the robber is not on $m$, copy the strategy
	for $\peter_0$. Suppose that, at some point, the robber moves to $m$. In the
	strategy for $\peter_0$, the robber is adjacent to a cop before every one of
	its turns. Thus, this cop can move to a vertex adjacent to $m$. One easily
	verifies that in all graphs $\peter_i$ for $i\in \{1,\dots,4\}$, if one cop
	is adjacent to $m$, there is at most one other escape route $t$ for the
	robber. As $\peter_i-m\simeq \peter_0$ has diameter 2, the other cop can
	move to block this escape route by moving to some vertex in $N[t]$. Thus,
	while applying this strategy, the robber will never move to $m$. Hence, the
	strategy copied from $\peter_0$ yields the desired final position.
\end{proof}

By weakening the condition that it is the cops' turn at the end of the strategy,
we can get more freedom as to where we can place the cops, enabling more
strategies.

\begin{lemma}\label{chasinglemma3} Let $0\leq i \leq 6$. If $x,y,z$ are any
	three distinct vertices of $\peter_i-m$, then there exists a strategy for 2
	cops on $\peter_i$ to reach the following situation.
	\begin{enumerate}
		\item The robber is on $x$, except in the case $x=m'$ and $i\in\{5,6\}$,
		      where the robber is either on $m'$ or $m$.
		\item The cops are on $y$ and $z$.
		\item It is the robber's turn.
	\end{enumerate}
\end{lemma}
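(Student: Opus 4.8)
The statement to prove is Lemma~\ref{chasinglemma3}: for any three distinct vertices $x,y,z$ of $\peter_i - m$, two cops can force a position where the robber is on $x$ (with the usual $m/m'$ caveat for $i\in\{5,6\}$), the cops are on $y$ and $z$, and it is the \emph{robber's} turn. The natural approach is to bootstrap from Lemma~\ref{chasinglemma1}, which already gives us exactly this kind of position but only when $\{x,y,z\}$ is a \emph{strong stable set} and with the cops' turn (not the robber's). So the plan has two ingredients: first, handle the case where $\{x,y,z\}$ happens to be a strong stable set by invoking Lemma~\ref{chasinglemma1} and then making one extra cop half-move; second, reduce the general case (where $\{x,y,z\}$ need not be stable, or need not be "strong") to the strong-stable-set case by a preliminary maneuver.

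**The strong-stable-set case.** Suppose $\{x,y,z\}$ is a strong stable set of $\peter_i-m$. Apply Lemma~\ref{chasinglemma1} to reach the position: robber on $x$ (or on $m'/m$ in the exceptional case), cops on $y$ and $z$, cops' turn. Now the cops simply pass (each stays put). Since $\{x,y,z\}$ is a strong stable set, $x\notin N[y]$ and $x\notin N[z]$, so the robber is not captured by this non-move, and we are now in exactly the desired position with the robber to move. (One should double-check the exceptional case $x=m'$, $i\in\{5,6\}$: here $N(m)=N(m')$, and since $y,z\in V(\peter_0)$ with $\{m',y,z\}$ a strong stable set in $\peter_i-m$, neither $y$ nor $z$ lies in $N[m']=N[m]$, so passing is still safe.) So in this case the lemma is immediate.

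**Reducing the general case.** The real content is when $\{x,y,z\}$ is an arbitrary triple of distinct vertices — it may contain an edge, or it may be stable but with $N(x)\cap N(y)\cap N(z)\neq\emptyset$. The idea is to first use two cops to reach \emph{some} configuration with the robber to move and the cops positioned so that on the cops' next turn they can occupy $y$ and $z$ while the robber is forced onto (or chooses) $x$. Concretely, in $\peter_0$ (diameter $2$, arc-transitive, highly symmetric) one can pick a strong stable set $\{x,y_0,z_0\}$ with $y_0\in N[y]$ and $z_0\in N[z]$ — the plan is to argue, using the transitivity properties of Lemma~\ref{petersenautomorphisms} and a small case analysis on the adjacency pattern of $x,y,z$, that such $y_0,z_0$ always exist (e.g.\ if $y$ and $z$ are non-adjacent we may often take $y_0=y$, $z_0=z$ after checking the "strong" condition, and otherwise shift one of them to a neighbour). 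Apply Lemma~\ref{chasinglemma1} to $\{x,y_0,z_0\}$: robber on $x$, cops on $y_0,z_0$, cops' turn. Now the cops move: the cop on $y_0$ goes to $y$, the cop on $z_0$ goes to $z$. The robber, sitting on $x$ at the start of its turn, may now move — but the claim is that $x$ is still occupied-or-reachable-back, i.e.\ we have reached the desired position with robber on $x$ and cops on $y,z$. We must ensure the cop move $y_0\to y$, $z_0\to z$ does not capture the robber prematurely (it cannot, since the robber is on $x$ and we will have chosen things so $x\notin\{y,z\}$) — but wait, this last move is \emph{before} the robber's turn, so actually the position "cops on $y,z$, robber on $x$, robber's turn" is reached right after this cop move, which is exactly what we want, provided $x\neq y$ and $x\neq z$ (true, distinctness) and the robber did not get a chance to flee in between (it did not — the robber's previous turn was spent being chased onto $x$). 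For $\peter_i$, $i\geq 1$, we either lift this through $\peter_i-m\simeq\peter_0$ exactly as in the proof of Lemma~\ref{chasinglemma1} (treating $m$ via the corner $m'$ or via $N(m)=N(m')$), handling any of $x,y,z$ that equals $m$ or $m'$ by the same device used there.

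**Main obstacle.** The delicate point is the reduction step: showing that for \emph{every} triple $x,y,z$ one can find a strong stable set $\{x,y_0,z_0\}$ with $y_0\in N[y]$, $z_0\in N[z]$, and then that the single cop move $y_0\to y$, $z_0\to z$ genuinely lands us in the target position without the robber escaping $x$ — this is automatic since that move ends the cops' turn and hands the robber a turn on $x$, but one must verify the move is legal (both $y_0y$ and $z_0z$ are edges or trivial) and safe. The existence of the right strong stable set requires a short structural check in $\peter_0$ — most likely exploiting that in the Petersen graph any vertex has an independent neighbourhood and that two non-adjacent vertices have a common neighbour, so one can always "pull back" $y$ and $z$ into a strong stable triple with a prescribed third point $x$; the exceptional $i\in\{5,6\}$ and the cases where one of $x,y,z$ is $m$ or $m'$ need to be folded in carefully, but they follow the template already established in Lemma~\ref{chasinglemma1}.
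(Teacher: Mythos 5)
Your proposal is correct and follows essentially the same route as the paper: invoke Lemma~\ref{chasinglemma1} on a strong stable set $\{x,y_0,z_0\}$ with $y_0\in N[y]$ and $z_0\in N[z]$, then spend the cops' turn moving onto $y,z$ (or passing), which hands the robber its turn on $x$. The only part you leave as a "short structural check" --- that such $y_0,z_0$ always exist --- is exactly what the paper supplies explicitly, by reducing to $x=\alpha_1$ with $z\in\{\beta_1,\beta_2\}$ via arc-transitivity and listing the intermediate strong stable sets and final cop moves in two small tables.
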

\begin{proof}
	Without loss of generality, we show the statement for $\peter_0$. For this
	lemma, generalizing to the cornered Petersen graphs is immediate.

	We first consider the case where $xz \in E(\peter_0)$. We will enumerate the
	main cases and conclude by symmetry for the others. We may assume that
	$x=\alpha_1$ and $z=\beta_1$ (using the labelling from Figure
	\ref{fig:petersenrobertson_graph}), all other possibilities can be solved
	using the automorphisms of Lemma \ref{petersenautomorphisms} (b). We apply
	Lemma \ref{chasinglemma1} to place the robber on vertex $\alpha_1$ and the
	cops on the vertices specified in Table \ref{table:strategy1} (always
	forming a strong stable set), and then specify the additional move required
	to place the cops in the desired final position.

	\begin{table}[h]
		\begin{tabular}{|c|c|c|}
			\hline
			\rowcolor{gray!60}
			Final position for cops ($y,z$) & Position after applying Lemma
			\ref{chasinglemma1}             & Movements \\ \hline
			$\alpha_2,\beta_1$              & $\beta_2, \beta_3$            &
			$\beta_2\rightarrow \alpha_2, \beta_3\rightarrow\beta_1$ \\ \hline
			$\beta_2,\beta_1$               & $\beta_2, \beta_3$            &
			$\beta_2\rightarrow\beta_2,  \beta_3\rightarrow\beta_1$  \\ \hline
			$\beta_3,\beta_1$               & $\beta_4, \beta_5$            &
			$\beta_4\rightarrow\beta_1, \beta_5\rightarrow\beta_3$   \\ \hline
			$\alpha_3,\beta_1$              & $\alpha_3, \beta_4$           &
			$\alpha_3\rightarrow\alpha_3, \beta_4\rightarrow\beta_1$ \\ \hline
		\end{tabular}
		\caption{Strategy on $\peter_0$ to bring the robber to $\alpha_1$ with the cops in the desired final position, when at least one cop needs to be adjacent to robber.}
		\label{table:strategy1}
	\end{table}

	It is easily seen that all other choices of $y$ are analogous by reflection
	of the graph relative to the vertical axis.

	We use a similar approach for the case where $xz \notin E$. We may suppose
	without loss of generality that $x=\alpha_1$ and $z=\beta_2$: it is easily
	verified that any two non-adjacent vertices can be expanded into a strong
	stable set, then apply Lemma \ref{petersenautomorphisms} (a). We must still
	verify the position can be reached for the different possible $y$. To
	further reduce the number of cases, we can also assume that $xy\notin E$ (if
	$xy\in E$, switching the roles of $y$ and $z$ brings us back to the previous
	case), as we can see in Table \ref{table:strategy2}.

	\begin{table}[h]
		\begin{tabular}{|c|c|c|}
			\hline
			\rowcolor{gray!60}
			Final position for cops ($y,z$) & Position after applying Lemma
			\ref{chasinglemma1}             & Movements \\ \hline
			$\alpha_3,\beta_2$              & $\alpha_3, \beta_4$           &
			$\alpha_3 \rightarrow\alpha_3, \beta_4 \rightarrow\beta_2$ \\ \hline
			$\alpha_4,\beta_2$              & $\alpha_3, \beta_4$           &
			$\alpha_3 \rightarrow\alpha_4, \beta_4 \rightarrow\beta_2$ \\ \hline
			$\beta_3,\beta_2$               & $\beta_3,\beta_2$             & $
			\beta_3 \rightarrow\beta_3, \beta_2 \rightarrow\beta_2$  \\ \hline
			$\beta_4,\beta_2$               & $\beta_4, \beta_5$            &
			$\beta_4 \rightarrow\beta_4, \beta_5 \rightarrow\beta_2$   \\ \hline
			$\beta_5,\beta_2$               & $\beta_4, \beta_5$            &
			$\beta_4 \rightarrow\beta_2, \beta_5 \rightarrow\beta_5$   \\ \hline
		\end{tabular}

		\caption{Strategy on $\peter_0$ to bring the robber to $\alpha_1$ with the cops in the desired final position, when neither cop needs to be adjacent to robber.}
		\label{table:strategy2}
	\end{table}
\end{proof}

In the next lemmas, we will consider graphs with the following properties, with
the goal of eventually showing that they do not exist. We state these properties
now to avoid repetition.

\begin{hypo}\label{labelhypo} Let $G$ be a connected graph such that $c(G)>3$
	and $u\in V(G)$ such that $G-N[u]\simeq \peter_i$, for some $0 \leq i \leq
	6$.
\end{hypo}

In the cases of $1\leq i\leq 6$, we may in particular consider that $m,m'\in
	V(G)$ by fixing the isomorphism. In the cases of $i=5,6$, as the labels $m$
	and $m'$ can be switched, we will always suppose $m'$ to be the vertex of
	the two which has the greatest degree in $G$ (if both have the same degree,
	then we choose arbitrarily).  To simplify notation, we will denote
	$B_u=V(G-N[u]-m)$. It is easily seen that in all cases $\langle B_u\rangle
	\simeq \peter_0$, that is $B_u$ induces a Petersen graph.

The approach will be to build up a number of structural properties of $G$ by
showing that otherwise there exists a winning strategy for $3$ cops, yielding a
contradiction. We start by proving that all vertices in $B_u$ have a neighbour
in $N(u)$.

\begin{lemma}\label{mindeg4} Consider Hypothesis \ref{labelhypo}. For all $x\in
		B_u$, $|N(x)\cap N(u)|\geq 1$.
\end{lemma}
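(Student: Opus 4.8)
The plan is to argue by contradiction: suppose some vertex $x \in B_u$ has no neighbour in $N(u)$, and construct a winning strategy for $3$ cops on $G$, contradicting Hypothesis \ref{labelhypo}. The key observation is that since $\langle B_u \rangle \simeq \peter_0$ is $3$-regular with girth $5$, the vertex $x$ has exactly $3$ neighbours inside $B_u$, and possibly also the vertex $m$ as a neighbour (in the cases $i \geq 1$); by assumption it has no neighbour in $N(u)$, so $N_G(x) \subseteq B_u \cup \{m\}$. In particular $x$ is "insulated" from $N(u)$: to reach $x$, the robber must pass through $B_u$ or through $m$.

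First I would place one cop permanently on $u$. This cop dominates $N[u]$, so effectively the robber is confined to the graph $G - N[u] \simeq \peter_i$, which has at most $11$ vertices. Now I have $2$ cops left to play on $\peter_i$, and I want to use the chasing machinery of Lemmas \ref{chasinglemma1} and \ref{chasinglemma3}. The idea: I want to trap the robber at the vertex $x$ with the two free cops sitting on its remaining escape routes. If $x \neq m'$ (or $i \notin \{5,6\}$), then since $x$ has exactly $3$ neighbours in $\peter_i - m$ when $i=0$, or $3$ neighbours in $B_u$ plus possibly $m$ — I need to choose which structure to exploit. Concretely: the $3$ neighbours of $x$ in $B_u$ together with $x$ itself span a subgraph of the Petersen graph; I would pick two of these three neighbours, say $y, z$, that form (with $x$) a strong stable set in $\peter_i - m$ — this is possible because the Petersen graph has girth $5$, so $\{y,z\}$ is independent, and one checks $N(x) \cap N(y) \cap N(z)$ can be made empty (the three common-neighbour condition). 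Apply Lemma \ref{chasinglemma3} to bring the robber to $x$ with cops on $y, z$ and the robber to move. The robber on $x$ has neighbours only among $\{$the third $B_u$-neighbour, $m\}$ together with $N(u)$-vertices — but wait, by our assumption $x$ has no neighbour in $N(u)$. So the robber's only escapes from $x$ are its third neighbour in $B_u$ and possibly $m$. But $m$ is cornered by $m'$, and the cop on $u$... hmm — the subtlety is that $m$ may have neighbours in $N(u)$, so this needs care.

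The hard part will be handling the escape vertex $m$ and the third $B_u$-neighbour of $x$ simultaneously with only two mobile cops, and dealing with the exceptional cases $i \in \{5,6\}$ where $m,m'$ are twins. I expect the clean route is: since we only have two mobile cops and the robber sitting on $x$ has at most two escape routes left inside $\peter_i$ (its third $B_u$-neighbour $t$, and $m$ when $i \geq 1$), I would instead apply Lemma \ref{chasinglemma1} with the strong stable set $\{x, y', z'\}$ chosen so that $\{y', z'\} \subseteq N[t] \cup N[m]$ dominates both remaining escape routes — using that $\langle B_u\rangle \simeq \peter_0$ has diameter $2$, so a single cop can be positioned to dominate $t$, and the relationship between $m$ and $B_u$ (each vertex of $B_u$ has at most... ) lets the second cop handle $m$. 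When it is the cops' turn (Lemma \ref{chasinglemma1}) and the robber is pinned on $x$ with both escapes dominated, the cops capture on the next move. For $i \in \{5,6\}$ and $x = m'$, the lemma statements already build in that the robber may be on $m$ or $m'$, which are twins, so the argument collapses to the same picture. Throughout, the fixed cop on $u$ guarantees the robber can never use $N(u)$ as a shelter, which is exactly what makes two mobile cops enough; writing out the small finite check that two cops can always dominate $\{t\} \cup N(m) \cap V(\peter_i)$ from a strong stable set containing $x$ is the routine-but-fiddly core of the argument.
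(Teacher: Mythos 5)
There is a genuine gap, and it is the central difficulty of the lemma. You keep the third cop permanently parked on $u$ and try to close the trap with only the two mobile cops inside $\langle B_u\rangle\simeq \peter_0$. But two cops cannot dominate the closed neighbourhood $N[x]\cap B_u=\{x,y,z,t\}$ of a vertex $x$ of the Petersen graph unless one of them stands on $x$ itself: the three neighbours $y,z,t$ are pairwise non-adjacent and their unique common neighbour is $x$ (girth $5$), so a cop on one of them covers only $x$ and itself among these four vertices, and a cop elsewhere at distance $2$ from $x$ covers exactly one of $y,z,t$. Hence whichever two positions you choose (your first attempt with cops on $y,z$ via Lemma \ref{chasinglemma3}, or your second with a strong stable set $\{x,y',z'\}$ via Lemma \ref{chasinglemma1}), at least one $B_u$-neighbour of $x$ is always uncovered and the robber slips away; in the second variant you even uncover two of $y,z,t$ while chasing $t$ and $m$. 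The paper's proof gets around this by \emph{using} the contradiction hypothesis in the opposite direction from you: because $x$ has no neighbour in $N(u)$, the cop on $u$ may safely step off $u$ onto a vertex $w\in N(u)$ adjacent to the third neighbour $v$ of $x$, so that the three cops on $y,z,w$ together dominate $N(x)\cap B_u$. This requires first knowing that some neighbour $v\neq m'$ of $x$ in $B_u$ actually has a neighbour $w$ in $N(u)$, which is why the paper proves a restricted statement under that extra assumption and then propagates it through $\langle B_u\rangle - m'$ by connectivity.

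You are also missing the base case of that propagation: if \emph{no} vertex of $B_u\setminus\{m'\}$ has a neighbour in $N(u)$, the ``move the cop from $u$ to $w$'' device is unavailable, and a separate argument is needed (for $i=0$ the graph would be disconnected; for $i\geq 1$ the paper posts a stationary cop on a vertex of $N[m]\cap N[m']\cap B_u$ to seal off $m$ and $m'$ and finishes with two cops on a piece of at most $6$ vertices). Finally, the escape to $m$, which you flag but do not resolve, genuinely needs the case analysis in the paper (cop on $w$ retreating to $u$, a cop occupying $m'$, and the separate treatment of $x=m'$ versus $m'\in\{y,z\}$); it cannot be dismissed by ``$m$ is cornered by $m'$'' alone, since no cop is sitting on $m'$ at that moment in your setup.
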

\begin{proof}
	Let $x\in B_u$. We prove a restricted version of the statement before
	proving it generally: we first suppose that there exists a neighbour $v\neq
	m'$ of $x$ in $B_u$ such that $|N(v)\cap N(u)|\geq 1$. In the case of
	$\mathcal P_0$, then $m'$ does not exist so $v$ could be any vertex of
	$B_u$. Suppose that $|N(x)\cap N(u)| = 0$, with the goal of reaching a
	contradiction. We note that in the case with $x=m'$, then by our choice of
	$m'$ we also know that $m$ also has no neighbours in $N(u)$. We show this
	situation yields a winning strategy for 3 cops.

	Let $y,z$ be the other neighbours of $x$ in $B_u$ and let $w\in N(v)\cap
	N(u)$. The situation is portrayed in Figure \ref{fig:mindeg4}. We start by
	placing a cop on $u$, which will only move if the robber enters $N[u]$. This cop
	is commonly referred to as a stationary cop. As long as this cop stays on
	$u$, the robber is stuck in $G-N[u]\simeq \peter_i$. Thus, the two other
	cops may apply the strategy from Lemma \ref{chasinglemma3} on $G-N[u]$ to
	place the robber on $x$, a cop on $y$ and a cop on $z$. In the special case
	where $i \in \{5,6\}$ and $x=m'$, the robber might actually be on $m$.

	\begin{figure}[h]
		\begin{tikzpicture}[scale=3, dot/.style = {circle, fill, minimum
						size=#1, inner sep=0pt, outer sep=0pt}, dot/.default =
						5pt  % size of the circle diameter 
			]
			%   Outside
			\node [dot] (0) at (0, 1) [label={above left: $z$}]{}; \node [dot]
			(2) at (-0.951057, 0.309017)  {}; \node [dot] (8) at
			(-0.587785,-0.809017)  {}; \node [dot] (9) at (0.587785,-0.809017)
			[label={below right: $v$}] {}; \node [dot] (3) at
			(0.951057,0.309017)  [label={above right: $x$}] {};

			%   Inside
			\node [dot] (1) at (0,0.5*1)   {}; \node [dot] (4) at
			(0.5*-0.951057, 0.5*0.309017)   {}; \node [dot] (6) at
			(0.5*-0.587785,0.5*-0.809017)  {}; \node [dot] (7) at
			(0.5*0.587785,0.5*-0.809017)  {}; \node [dot] (5) at (0.5*0.951057,
			0.5*0.309017) [label={above left: $y$}]  {};

			% Mutant
			\node [dot] (10) at (1, 1) [label={above: $m$}]{};

			\node [dot] (11) at (3, 0) [label={above: $u$}]{}; \node [dot] (12)
			at (2, 0) [label={above: $w$}]{};

			\draw (2,0) ellipse (0.4 and 0.75);

			\draw (11) to (12); \draw (9) to (12);

			\draw (0) to (1); \draw (0) to (2); \draw (0) to (3);

			\draw (1) to (6); \draw (1) to (7);

			\draw (2) to (8); \draw (2) to (4);

			\draw (4) to (5); \draw (4) to (7);

			\draw (5) to (6);

			\draw (8) to (6); \draw (8) to (9);

			\draw (9) to (7);

			\draw (9) to (3); \draw (3) to (5);

			% Mutant    
			\draw (10) to (2); \draw (10) to (3);

			\node at (2,0.9) {$N(u)$};

		\end{tikzpicture}
		\caption{Example situation during the proof of Lemma \ref{mindeg4}.
			Unused or unknown vertices and edges are omitted.}
		\label{fig:mindeg4}
	\end{figure}

	During the last turn of this strategy, the cop on $u$ moves to $w$. It is
	now the robber's turn. All of the robber's neighbours in $B_u$ are protected
	by cops, there is a cop adjacent to the robber, and the robber has no
	neighbour in $N(u)$. Note that this statement is also true if the cop is on
	$m$, in the special case with $i=5,6$ and $x=m'$, as $m$ and $m'$ have the
	same neighbours $v,y,z,$ in $B_u$ (and both don't have neighbours in $N(u)$
	as stated above).

	The robber is caught unless it can move to an unprotected vertex outside
	$B_u$, which must necessarily be $m$. Note that there are some cases in which
	$m$ is still protected, for instance the special case discussed above. The
	robber was on $x$ and is now on $m$, so $x$ is adjacent to $m$. If one of
	the cops is adjacent to $m$, the robber is caught, so we assume otherwise,
	that $m$ is adjacent to neither $y$ nor $z$.

	If $x=m'$, the cop on $w$ moves back to $u$ and the cop on $y$ moves to
	$m'$: as $m'$ covers all neighbours of $m$ in $B_u$ and $u$ covers all
	neighbours in $N(u)$, the robber is trapped and will be caught one turn
	later.

	Now, suppose that $x \neq m'$. As $x$ is adjacent to $m$, $x$ is
	also adjacent to $m'$. Recall that we supposed that $v \neq m'$. Thus, $m'$
	is either $y$ or $z$. The cop on $m'$ stays put and the cop on $w$ moves
	back to $u$, trapping the robber on $m$. The last cop may then capture the
	robber within a few turns.

	In all cases, a contradiction is reached with the hypothesis that $c(G) >
	3$. This proves the specific case in which $x$ has a neighbour $v\in B_u$
	(respecting $v\neq m'$) which has at least one neighbour in $N(u)$. We now
	wish to prove the full statement without this condition.

	With what we have proved so far, we know that as soon as a vertex of $B_u$
	(other than $m'$) has a neighbour in $N(u)$, we can say the same for its 3
	neighbours in $B_u$ (and then we can repeat this argument to their
	neighbours, etc.). Thus, in order to prove the lemma, it suffices to show
	that there exists at least one vertex of $B_u\setminus \{m'\}$ that has a
	neighbour in $N(u)$ (because $\langle B_u\rangle-m'$ is necessarily
	connected).

	Suppose the contrary: that no vertex of $B_u\setminus \{m'\}$ has a
	neighbour in $N(u)$. If we are in the case of $G-N[u]\simeq \peter_0$, then
	$G$ would be disconnected (as $B_u\setminus \{m'\}=B_u$), which is a
	contradiction. In the remaining cases, we give a winning strategy for 3
	cops. Place a stationary cop on a vertex $t\in N[m]\cap N[m']\cap B_u$,
	place one cop on $u$, and place the third cop anywhere. The robber must
	choose an initial position in $B_u\setminus N[t]$. As any exit from $B_u$
	will go through $m$ or $m'$ (by our hypothesis), the stationary cop on $t$
	guarantees that the robber will never leave $B_u$. The two other cops then
	have a winning strategy on the component of $\langle B_u\rangle- N[t]$
	containing the robber, which has at most 6 vertices.
\end{proof}

We will frequently use this idea of applying the chasing strategy on $G-N[u]$
while we leave a cop on $u$. As long as there is a cop on $u$, it is as if we
were playing on $G-N[u]$. Moving the cop from $u$ during the last move of this
strategy does not affect it, as it happens after the last robber move which is
part of that strategy.

Similarly to in the first part of the last proof, many of the arguments in this
section will end with the only remaining possible situation being that there is
a cop on $u$, a cop on $m'$, and the robber on $m$. In this situation, the
robber cannot move, as $m'$ corners $m$ in $G-N[u]$, and a third cop may, within
a few turns, capture the robber.

We now characterize the neighbourhoods in $B_u$ of vertices in $N(u)$.

\begin{lemma}\label{neighbourhoodproperty} Consider Hypothesis \ref{labelhypo}.
	If $w \in N(u)$, then $N(w) \cap B_u$ does not contain a subset $\{a,b,c\}$
	of distinct vertices such that :
	\begin{enumerate}
		\item $ab \notin E(G)$;
		\item $c \notin N(a) \cap N(b)$ ($c$ is not the common neighbour of $a$
		      and $b$ in $\langle B_u\rangle $);
		\item $c \notin N(x)$ where $\{x\}=N(a) \cap N(b)\cap B_u$ ($c$ is not
		      adjacent to the common neighbour of $a$ and $b$ in $\langle
		      B_u\rangle$).
	\end{enumerate}
\end{lemma}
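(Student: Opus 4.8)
The plan is to argue by contradiction: assuming some $w\in N(u)$ admits a subset $\{a,b,c\}\subseteq N(w)\cap B_u$ satisfying (1)--(3), I would build a winning strategy for $3$ cops on $G$, contradicting Hypothesis~\ref{labelhypo}.

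The first step is to pin down the shape of such a triple inside $\langle B_u\rangle\simeq\peter_0$. Let $x$ be the common neighbour of $a$ and $b$ in $\langle B_u\rangle$; it is unique because the Petersen graph has girth $5$ and diameter $2$, so that conditions (2)--(3) say precisely that $c\notin N[x]$. Since any vertex adjacent to both $a$ and $b$ must be $x$, the vertex $c$ is adjacent to at most one of $a,b$, and hence, up to relabelling $B_u$ by an automorphism of $\langle B_u\rangle$ (Lemma~\ref{petersenautomorphisms}), there are only two cases: (i) $\{a,b,c\}$ is a strong stable set (Definition~\ref{definition:strong_stable_set}), and (ii) $\{a,b,c\}$ spans exactly one edge. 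In both cases one checks that $N(a)\cap N(b)\cap N(c)=\emptyset$ and that no closed neighbourhood of a vertex of $\langle B_u\rangle$ contains $\{a,b,c\}$; in particular $w$ is (essentially) the unique vertex of $G$ dominating $\{a,b,c\}$, and, crucially, the stationary cop on $u$ can reach it in one step since $w\in N(u)$. I also record that $N_{\langle B_u\rangle}(x)=\{a,b,s\}$ with $s\notin\{a,b,c\}$, and that $x\notin\{a,b,c\}$.

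For the strategy I would use the device repeated throughout this section: a stationary cop on $u$ confines the robber to $G-N[u]\simeq\peter_i$ (as in Lemma~\ref{mindeg4}), and the other two cops use the chasing strategies of Lemmas~\ref{chasinglemma1}--\ref{chasinglemma3}. Case (i) is clean: park a second cop permanently on $w$, so that $N[w]\supseteq\{a,b,c\}$ is forever forbidden to the robber; then the robber is confined to an induced subgraph of $\peter_i$ omitting the strong stable set $\{a,b,c\}$, and since $\langle B_u\rangle$ minus a strong stable set is a forest (hence so is every induced subgraph of it) and $m$ is a corner, the last, mobile cop catches the robber, peeling off $m$ by Corollary~\ref{removecorner} and then the forest. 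The same ``park on $w$'' argument in fact also settles case (ii), \emph{except} in the narrow situation where $N(w)\cap B_u=\{a,b,c\}$ exactly (and $m$ provides no extra help): there $\langle B_u\rangle-\{a,b,c\}$ contains an induced $5$-cycle, so two cops no longer suffice for the confined region and one genuinely has to move the $u$-cop. In that remaining situation I would instead use the chase to drive the robber onto $x$, position the two chasing cops so that $s$ and the edges of $\langle B_u\rangle$ incident to $x$ are guarded, and on the decisive turn send the cop from $u$ to $w$: since $w$ then covers $a$ and $b$, the robber on $x$ has no move left inside $B_u$, so its only recourse is to $m$ --- dispatched by the standard ``cop on $u$, cop on $m'$, robber on $m$'' endgame recalled after Lemma~\ref{mindeg4} --- or into $N(u)$.

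The main obstacle is exactly this last escape into $N(u)$ at the instant the cop vacates $u$: by Lemma~\ref{mindeg4} the robber's current vertex always has a neighbour in $N(u)$, and if it has one other than $w$ the robber threatens to slip away; this is why case (ii) cannot be reduced to the easy argument. Resolving it is where the bulk of the work lies, and the plan would be to choose the vertex onto which the robber is cornered, and the positions of the chasing cops, so that one of them always sits on a neighbour in $\langle B_u\rangle$ of whichever vertex the robber is forced off; then, when the robber jumps to some $w'\in N(u)$, the cop on $w$ steps back to $u$ while that chasing cop steps onto the vacated $B_u$-vertex, re-confining and re-cornering the robber, and one argues this terminates by bounding the robber's available territory. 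Carrying this bookkeeping through the handful of sub-configurations of case (ii) is the delicate part; case (i) and the bulk of case (ii) are routine.
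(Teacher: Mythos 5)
There is a genuine gap, and it is exactly at the point you flag as ``where the bulk of the work lies.'' Your plan drives the robber to $x$ (the common neighbour of $a$ and $b$ in $\langle B_u\rangle$) and then sends the cop \emph{from $u$} to $w$ to cover $a$ and $b$; this opens the escape into $N(u)\setminus\{w\}$, which by Lemma~\ref{mindeg4} the robber always has available, and your proposal to repair this by ``re-confining and re-cornering'' bookkeeping is left entirely unspecified --- it is not a proof. The idea you are missing is the one piece of the hypothesis you never use: $c\in N(w)$. Conditions (2)--(3) say precisely that $c\notin N[x]$, so $\{x,c\}$ extends to a strong stable set $\{x,c,z\}$ of $\langle B_u\rangle$, and Lemma~\ref{chasinglemma1} lets the two mobile cops end a chase with the robber on $x$, one cop on $c$ and one on $z$, with the cops to move. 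The cop on $c$ then steps onto $w$ (legal because $cw\in E(G)$), covering $a$ and $b$; the cop on $z$ steps into $N[d]$, where $d$ is the third $B_u$-neighbour of $x$ (possible since $\langle B_u\rangle$ has diameter $2$); and the cop on $u$ \emph{never moves}, so $N(u)$ is never open. The robber on $x$ is then reduced to staying put (fatal one move later) or fleeing to $m$, which is handled by the standard cop-on-$u$, cop-on-$m'$ endgame. This is the paper's argument, and it makes your entire case split and the difficult sub-case of your case (ii) unnecessary.

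Two secondary problems with the ``park a cop permanently on $w$'' reduction. First, in your case (ii) it does not settle the situation where $N(w)\cap B_u$ properly contains $\{a,b,c\}$: the fourth neighbour $e$ of $w$ in $B_u$ is unconstrained, and $\langle B_u\rangle-\{a,b,c,e\}$ can still contain an induced $5$-cycle (e.g.\ $\{a,b,c\}=\{\alpha_2,\alpha_5,\alpha_3\}$ leaves the inner pentagram $\beta_1\beta_3\beta_5\beta_2\beta_4$ intact, and $e=\alpha_1$ does not break it), so one mobile cop does not suffice there. Second, even in case (i), the confined region is not just the tree $\langle B_u\rangle-\{a,b,c\}$: it may also contain $m$, and when $m'\in N(w)$ the vertex $m$ need not be a corner of the confined region and can close an induced cycle of length $\geq 5$ through the tree; ``peeling off $m$ by Corollary~\ref{removecorner}'' does not apply as stated. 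Both issues evaporate in the paper's argument because it never relies on the residual region being cop-win.
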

\begin{proof}
	Suppose that $N(w)\cap B_u$ does contains a subset $\set{a,b,c}$ respecting
	these conditions. We give a winning strategy for 3 cops on $G$, which
	will lead to a contradiction. We denote by $x$ the common neighbour of $a$
	and $b$ in $B_u$, which exists by condition $(1)$ as vertices of distance 2
	in a Petersen graph have exactly one common neighbour. Furthermore by
	condition $(2)$, $c\neq x$. Denote by $d$ the neighbour of $x$ in $B_u$ that
	is neither $a$ or $b$. Then by condition $(3)$, $c\neq d$.

	Let $z$ be a vertex of $B_u$ such that $\{x,c,z\}$ is a strong stable set of
	$\langle B_u\rangle$ (it is easily seen that any stable set of size 2 in the
	Petersen graph can be expanded into a strong stable set). The situation is
	portrayed in Figure \ref{fig:neighbourhoodproperty}. We place a cop on $u$
	at the start of the game, and then use Lemma \ref{chasinglemma1} to place
	the other cops on $c$ and $z$, and the robber on $x$ (or if $x=m'$ and $i\in
	\{5,6\}$ possibly on $m$). As stated in Lemma \ref{chasinglemma1}, it is now
	the cops' turn.

	\begin{figure}[h]
		\begin{tikzpicture}[scale=3, dot/.style = {circle, fill, minimum
						size=#1, inner sep=0pt, outer sep=0pt}, dot/.default =
						5pt  % size of the circle diameter 
			]
			%   Outside
			\node [dot] (0) at (0, 1) [label={above left: $d$}]{}; \node [dot]
			(2) at (-0.951057, 0.309017) [label={above left: $z$}] {}; \node
			[dot] (8) at (-0.587785,-0.809017)  {}; \node [dot] (9) at
			(0.587785,-0.809017) [label={below right: $a$}] {}; \node [dot] (3)
			at (0.951057,0.309017)  [label={above right: $x$}] {};

			%   Inside
			\node [dot] (1) at (0,0.5*1)  {}; \node [dot] (4) at (0.5*-0.951057,
			0.5*0.309017)  {}; \node [dot] (6) at (0.5*-0.587785,0.5*-0.809017)
			[label={above left: $c$}] {}; \node [dot] (7) at
			(0.5*0.587785,0.5*-0.809017) {}; \node [dot] (5) at (0.5*0.951057,
			0.5*0.309017) [label={above left: $b$}] {};

			% Mutant
			\node [dot] (10) at (1, 1) [label={above: $m$}]{};

			\node [dot] (11) at (3, 0) [label={above: $u$}]{}; \node [dot] (12)
			at (2, 0) [label={above: $w$}]{};

			\draw (2,0) ellipse (0.4 and 0.75);

			\draw (6) to (12); \draw (5) to (12); \draw (11) to (12); \draw (9)
			to (12);

			\draw (0) to (1); \draw (0) to (2); \draw (0) to (3);

			\draw (1) to (6); \draw (1) to (7);

			\draw (2) to (8); \draw (2) to (4);

			\draw (4) to (5); \draw (4) to (7);

			\draw (5) to (6);

			\draw (8) to (6); \draw (8) to (9);

			\draw (9) to (7);

			\draw (9) to (3); \draw (3) to (5);

			% Mutant    
			\draw (10) to (0); \draw (10) to (3);

			\node at (2,0.9) {$N(u)$};

		\end{tikzpicture}
		\caption{Example situation during the proof of Lemma
			\ref{neighbourhoodproperty}. Unused or unknown vertices and edges
			are omitted.}
		\label{fig:neighbourhoodproperty}
	\end{figure}

	The cop on $c$ moves to $w$, the cop on $z$ moves to either $d$ or a
	neighbour of $d$ (this is possible because the Petersen graph has diameter
	2), and the cop on $u$ stays still. All neighbours of $x$ in $N(u)$ are
	covered by the cop on $u$, the vertices $a$ and $b$ are covered by the cop
	on $w$, and $d$ is covered by the 3rd cop, which is either on $d$ or on a
	neighbour of $d$. Hence, after the robber's move the robber must either be
	on $x$ or on $m$ (if $m$ exists, that is $1\leq i\leq 6$, and if moving to
	$m$ from $x$ is possible). We note that in the special case with $x=m'$ and
	$i\in \{5,6\}$ the robber might have already been on $m$, but then the same
	argument as above shows that the robber could not have moved outside of
	either $x=m'$ or $m$.

	If the robber is still on $x$, but cannot be immediately captured, the cop
	which is adjacent to $d$ moves to $d$. Now, the robber cannot stay put
	without being captured. Hence, for the rest of the proof we can assume the
	robber has now moved to $m$ or was already on $m$ (in the latter case, there
	is not necessarily a cop now on $d$, as we only do this move if the robber
	was still on $x$). This in particular excludes the case $i=0$.

	If $m'$ is $a$ or $b$, the cop on $w$ moves to $m'$. If $m'=d$, the cop that
	is adjacent to $d$ or on $d$ moves to (or stays on) $d$. In both cases,
	there is now a cop on $m'$, which, together with the cop on $u$, guarantees
	that the robber is now stuck on $m$. The third cop may capture the robber
	within a few turns.

	If $m'=x$, then, by definition, $N(m)\cap B_u\subseteq \{a,b,d,x\}$. As
	previously done, move a cop to $d$ (if it is not already there). The pair of cops
	on $d$ and $w$ cover this set, hence the robber cannot move. At the next
	cops' turn, the cop on $d$ moves to $m'$. Then, we are in the same situation
	as above.

	In all cases, there is a contradiction as $c(G)>3$.
\end{proof}

Although the statement of the previous lemma appears somewhat convoluted, it can
essentially be reformulated as the following lemma.

\begin{lemma}\label{dominatingneighbourhood} Consider Hypothesis
	\ref{labelhypo}. If $w \in N(u)$, then there exists a vertex of $B_u$
	dominating $N(w) \cap B_u$.
\end{lemma}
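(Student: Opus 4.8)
The plan is to read this off directly from Lemma \ref{neighbourhoodproperty}, using only two elementary facts about the Petersen graph: it is triangle-free (girth $5$), and any two of its non-adjacent vertices have exactly one common neighbour (it is strongly regular with parameters $(10,3,0,1)$). Write $S=N(w)\cap B_u$ and recall that $\langle B_u\rangle\simeq\peter_0$; the goal is to exhibit a vertex $p\in B_u$ with $S\subseteq N[p]$, which is what ``$p$ dominates $N(w)\cap B_u$'' means.

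First I would dispose of the small cases. If $|S|\leq 1$, any vertex of $B_u$ (for instance the unique element of $S$, if $S$ is nonempty) works. If $|S|=2$, say $S=\{a,b\}$: when $ab\in E(G)$, the vertex $a$ dominates $S$; when $ab\notin E(G)$, their unique common neighbour $x$ in $\langle B_u\rangle$ satisfies $\{a,b\}\subseteq N(x)\subseteq N[x]$, so $x$ dominates $S$.

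Now suppose $|S|\geq 3$. Since $\langle B_u\rangle$ is triangle-free, $S$ cannot be a clique, so it contains two non-adjacent vertices $a,b$; let $x$ be their unique common neighbour in $\langle B_u\rangle$, so $a,b\in N(x)$. I claim $S\subseteq N[x]$. Take any $c\in S\setminus\{a,b\}$. Then $\{a,b,c\}$ is a set of three distinct vertices of $N(w)\cap B_u$ with $ab\notin E(G)$, so condition (1) of Lemma \ref{neighbourhoodproperty} is satisfied; by that lemma the triple must fail condition (2) or (3). If (2) fails, then $c\in N(a)\cap N(b)$, and since $a,b,c\in B_u$ this makes $c$ a common neighbour of $a$ and $b$ in $\langle B_u\rangle$, forcing $c=x$ by uniqueness. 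If (3) fails, then $c\in N(x)$. In either case $c\in N[x]$, so $S=\{a,b\}\cup(S\setminus\{a,b\})\subseteq N[x]$, and $x\in B_u$ is the desired dominating vertex.

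I do not expect any genuine obstacle: the substantive work is already contained in Lemma \ref{neighbourhoodproperty}, and the only care needed is the bookkeeping of the degenerate cases $|S|\leq 2$ and a clean invocation of the uniqueness of common neighbours in the Petersen graph, the same structural fact that underlies the earlier lemma.
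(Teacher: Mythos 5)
Your proof is correct and is essentially the paper's argument: the paper also reduces to Lemma \ref{neighbourhoodproperty} by picking non-adjacent $a,b\in N(w)\cap B_u$ (possible by triangle-freeness), taking their unique common neighbour $x$ in $\langle B_u\rangle$, and concluding that every further $c$ lies in $N[x]$ — it just phrases this as a contradiction (choose $c\notin N[x]$ and violate the lemma) rather than directly, and dismisses $|S|\le 2$ via the diameter-2 property instead of your explicit case split.
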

\begin{proof}
	Recall that $\langle B_u\rangle$ is a Petersen graph. The result is trivial
		if $|N(w) \cap B_u|\leq 2$ as the diameter of $\langle B_u$ is 2. If
		$|N(w)\cap B_u|\geq 3$, suppose the statement is false. As $\langle
		B_u\rangle$ does not contain a triangle, not all vertices of $N(w)\cap
		B_u$ can be pairwise adjacent, so we can choose $a,b\in N(w)\cap B_u$
		such that $a,b$ are not adjacent. Denote $x$ the common neighbour of
		$a,b$ in $B_u$. By our supposition, $x$ does not dominate $N(w)\cap
		B_u$, thus we can choose $c\in N(w)\cap B_u$ not in $N[x]$. The subset
		$\{a,b,c\}$ then contradicts Lemma \ref{neighbourhoodproperty}.
\end{proof}

In particular, every vertex of $N(u)$ can have at most 4 neighbours in $B_u$
because $\langle B_u \rangle$ is 3-regular. We also note that in some of the
cases there is a unique choice for this dominating vertex, in particular when
$N(w)\cap B_u$ has 3 or 4 vertices. We are now ready to strengthen Lemma
\ref{mindeg4}.

\begin{lemma}\label{mindeg6} Consider Hypothesis \ref{labelhypo}. For all $x\in
		B_u$, the following holds.
	\begin{enumerate}
		\item If $x\notin N[m']$, then $|N(x)\cap N(u)|\geq 3$.
		\item If $x\in N[m']$, then $|N(x)\cap N(u)|\geq 2$.
	\end{enumerate}
\end{lemma}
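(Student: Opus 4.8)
The plan is to follow the same pattern as the proofs of Lemmas \ref{mindeg4}, \ref{neighbourhoodproperty} and \ref{dominatingneighbourhood}: assume a vertex $x\in B_u$ violates the claimed lower bound on $|N(x)\cap N(u)|$, and construct a winning strategy for $3$ cops, contradicting $c(G)>3$. The basic setup is always the same: place a stationary cop on $u$ (so the robber is confined to $G-N[u]\simeq\peter_i$, i.e.\ effectively to $B_u$ together with $m$), and use Lemma \ref{chasinglemma1} or \ref{chasinglemma3} to manoeuvre the other two cops onto a well-chosen pair of vertices of $B_u$ with the robber pinned at $x$. The extra leverage over Lemma \ref{mindeg4} is Lemma \ref{dominatingneighbourhood}: each $w\in N(u)$ has all of $N(w)\cap B_u$ dominated by a single vertex of $B_u$, so a single cop moving from $u$ to $w$ only ``threatens'' a closed neighbourhood in $\langle B_u\rangle$, not an arbitrary set.

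For part (1), suppose $x\notin N[m']$ but $|N(x)\cap N(u)|\le 2$. Write $N(x)\cap B_u=\{p,q,r\}$ and $N(x)\cap N(u)\subseteq\{w_1,w_2\}$ (possibly fewer, or empty — the case $|N(x)\cap N(u)|\le 1$ is already covered by Lemma \ref{mindeg4} once we note $x\notin N[m']$). I would first get $2$ cops onto a strong stable set $\{p,q,r\}$ is a stable set of $\langle B_u\rangle$ — actually the three neighbours of a vertex in the Petersen graph do form a stable set, and one checks they are a strong stable set — so by Lemma \ref{chasinglemma1} we can reach the position: robber on $x$, cops on two of $\{p,q,r\}$, say $q$ and $r$, cops' turn. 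Now the cop on $u$ moves to $w_1$, which dominates one vertex of $N(x)\cap B_u$ by Lemma \ref{dominatingneighbourhood} — if $w_1$'s dominating vertex in $B_u$ can be arranged to cover $p$ (adjusting which two of $p,q,r$ we parked cops on, using the freedom in Lemma \ref{chasinglemma1}), then after the cops move ($u\to w_1$, and the two cops staying put or shuffling) every neighbour of $x$ is covered: $q,r$ by cops, $p$ via $w_1$, the at-most-one remaining vertex $w_2$ of $N(x)\cap N(u)$ — this is the sticking point, since $w_2$ need not be covered. To handle $w_2$ I would instead use Lemma \ref{chasinglemma3} to place cops on $q,r$ with the robber on $x$ and \emph{robber's} turn, forcing the robber to commit first; then, depending on whether the robber moves to $w_1$, $w_2$, $p$, or $m$, respond accordingly. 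When $|N(x)\cap N(u)|=2$ we genuinely have two escape routes $w_1,w_2$ plus $p$ plus possibly $m$, which is too many for two mobile cops in one move; so the real argument must be that we can bring a cop onto $w_1$ or into $N[w_1]\cap N[w_2]$ \emph{before} the robber is forced to move, exploiting that the cop on $u$ is one step from every $w_i$ and that $w_1,w_2\in N(u)$ so they are at distance $2$ via $u$. I would set this up as: stationary cop on $u$; the two chasing cops bring the robber to $x$ with a cop already on, say, $q$ and the other free to sit on a vertex dominating $\{p, r\}$ — possible since $p,r$ are at distance $\le 2$ in $\langle B_u\rangle$ — leaving the robber only $w_1,w_2,m$; then the cop on $u$ covers $w_1,w_2$ (they are neighbours of $u$!) so in fact the robber is immediately caught or can only go to $m$, and the standard ``$u$/$m'$/$m$'' endgame (as in the remark after Lemma \ref{mindeg4}) finishes it, using $x\notin N[m']$ to know $m'$ is one of the cop-controlled vertices or can be reached.

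For part (2), $x\in N[m']$, we are only claiming $|N(x)\cap N(u)|\ge 2$, i.e.\ we rule out $|N(x)\cap N(u)|\le 1$. Here Lemma \ref{mindeg4} already gives $\ge 1$ when $x\ne m'$ and the refinement handles $x=m'$ and the $=1$ case; the strategy is the same but now when the robber flees to $m$ we use that $m'$ is adjacent to $x$ (or equals $x$) so a chasing cop can step to $m'$ and, with the cop returning $w\to u$, trap the robber on $m$, then finish. The main obstacle throughout is the bookkeeping of which vertex of $B_u$ a given $w\in N(u)$ dominates, and arranging — via the choice of strong stable set and the flexibility in Lemmas \ref{chasinglemma1}/\ref{chasinglemma3} — that the two parked cops plus the cop moving $u\to w$ together dominate $N[x]\cap B_u$, leaving the robber only the exit to $m$, after which the already-standardised endgame applies. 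I expect the proof to split into a modest number of subcases according to (a) whether $x=m'$, (b) the value of $i$ (with $i\in\{5,6\}$ needing the ``robber may be on $m$ not $m'$'' caveat as in the earlier lemmas), and (c) how many vertices of $N(u)$ are adjacent to $x$; each subcase should be a short variant of the template above.
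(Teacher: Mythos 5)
Your plan for part (1) — keep the stationary cop on $u$ so the robber is confined to $B_u\cup\{m\}$, and cover $N(x)\cap B_u=\{p,q,r\}$ with the two mobile cops — cannot be carried out. In the Petersen graph $\langle B_u\rangle$ the three neighbours $p,q,r$ of $x$ are pairwise non-adjacent and any two of them have $x$ as their \emph{unique} common neighbour; hence the only vertex of $B_u$ dominating $\{p,r\}$ is $x$ itself, which the robber occupies. So two cops placed in $B_u\setminus\{x\}$ can never dominate all of $\{p,q,r\}$, and the robber always retains an escape back into $B_u$; the step ``the other free to sit on a vertex dominating $\{p,r\}$'' has no valid target. (Relatedly, $\{p,q,r\}$ is \emph{not} a strong stable set — $x$ is a common neighbour of all three — so Lemma \ref{chasinglemma1} does not apply as you invoke it; and Lemma \ref{mindeg4} only covers $|N(x)\cap N(u)|=0$, so both the $=1$ and $=2$ cases remain to be handled.)

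The paper's proof does the opposite of what you propose: it deliberately vacates $u$. Writing $y_1,y_2,y_3$ for the neighbours of $x$ in $B_u$, one first observes via Lemma \ref{dominatingneighbourhood} that the vertices dominating $N(w_1)\cap B_u$ and $N(w_2)\cap B_u$ both lie in $N[x]\cap B_u$, so after relabelling they lie in $\{y_1,y_2,x\}$. Cops are placed on $y_1,y_2$ (Lemma \ref{chasinglemma3}) while the cop on $u$ moves to a vertex $t\in N(u)$ adjacent to $y_3$ (which exists by Lemma \ref{mindeg4}). Now all of $y_1,y_2,y_3$ are covered and $x\notin N[m']$ rules out $m$, so the robber is \emph{forced out} to $w_1$ or $w_2$. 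Only then does the trap close: the cop on $t$ returns to $u$, covering every vertex of $N(u)$, and one of the cops on $y_1,y_2$ steps onto the vertex $a$ dominating $N(w_1)\cap B_u$, sealing every return into $B_u$ with a single cop, while the third cop heads for $m'$ to handle a flight to $m$. You correctly identify that ``$w_2$ need not be covered'' is the sticking point, but the resolution is to let the robber enter $N(u)$ and exploit Lemma \ref{dominatingneighbourhood} on the far side of that move, not to seal $B_u$. Part (2) is a genuine variant of this argument (with $m$ playing the role of a second $w_i$, and extra care when $x=m'$ and $i\in\{5,6\}$), not ``the same strategy''; as written, your sketch of it inherits the same gap.
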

\begin{proof}\item
	\begin{enumerate}
		\item Suppose the contrary: there exists $x\in B_u\setminus N[m']$ such
		      that $|N(x)\cap N(u)| \in \{1,2\}$ (as by Lemma \ref{mindeg4},
		      $|N(x)\cap N(u)|\geq 1$). We give a winning strategy for 3
		      cops on $G$. Denote by $w_1,w_2$ the neighbours of $x$ in $N(u)$
		      (if there is only one neighbour, set $w_1=w_2$) and by
		      $y_1,y_2,y_3$ the neighbours of $x$ in $B_u$.

		      By Lemma \ref{dominatingneighbourhood}, there exists a vertex of
		      $B_u$ dominating the neighbourhood of $w_1$ in $B_u$. As $x$ is in
		      this neighbourhood, we know this dominating vertex (there might be
		      more than one possible choice) is in $\{y_1,y_2, y_3,x\}$. This is
		      also true for $w_2$. Thus, we can pick at most 2 elements of
		      $\{y_1,y_2,y_3,x\}$ that dominate all neighbours of $w_1,w_2$ in
		      $B_u$.

		      Without loss of generality (by symmetry of $y_1,y_2,y_3$ in the
		      Petersen graph), we assume the vertices dominating the
		      neighbourhoods of $w_1$ and of $w_2$ in $B_u$ are in
		      $\{y_1,y_2,x\}$. By Lemma \ref{mindeg4}, $y_3$ must have a
		      neighbour $t$ in $N(u)$. The situation is portrayed in Figure
		      \ref{fig:mindeg6}.

		      \begin{figure}[h]
			      \begin{tikzpicture}[scale=3, dot/.style = {circle, fill,
							      minimum size=#1, inner sep=0pt, outer
							      sep=0pt}, dot/.default = 5pt
							      % size of the circle diameter 
				      ]
				      %   Outside
				      \node [dot] (0) at (0, 1) [label={above left: $y_3$}]{};
				      \node [dot] (2) at (-0.951057, 0.309017)  {}; \node [dot]
				      (8) at (-0.587785,-0.809017)  {}; \node [dot] (9) at
				      (0.587785,-0.809017) [label={below right: $y_1$}] {};
				      \node [dot] (3) at (0.951057,0.309017)  [label={above
				      right: $x$}] {};

				      %   Inside
				      \node [dot] (1) at (0,0.5*1)  {}; \node [dot] (4) at
				      (0.5*-0.951057, 0.5*0.309017)  {}; \node [dot] (6) at
				      (0.5*-0.587785,0.5*-0.809017) {}; \node [dot] (7) at
				      (0.5*0.587785,0.5*-0.809017)  {}; \node [dot] (5) at
				      (0.5*0.951057, 0.5*0.309017) [label={above left: $y_2$}]
				      {};

				      % Mutant
				      \node [dot] (10) at (1, 1) [label={above: $m$}]{};

				      \node [dot] (11) at (3, 0) [label={above: $u$}]{}; \node
				      [dot] (12) at (2, -0.4) [label={above: $w_1$}]{}; \node
				      [dot] (13) at (2, 0) [label={above: $w_2$}]{}; \node [dot]
				      (14) at (2, 0.4) [label={above: $t$}]{};

				      \draw (2,0) ellipse (0.4 and 0.75);

				      \draw (8) to (12); \draw (9) to (12);

				      \draw (5) to (13);

				      \draw (0) to (14); \draw (11) to (12); \draw (11) to (13);
				      \draw (11) to (14);

				      \draw (3) to (13); \draw (3) to (12);

				      \draw (0) to (1); \draw (0) to (2); \draw (0) to (3);

				      \draw (1) to (6); \draw (1) to (7);

				      \draw (2) to (8); \draw (2) to (4);

				      \draw (4) to (5); \draw (4) to (7);

				      \draw (5) to (6);

				      \draw (8) to (6); \draw (8) to (9);

				      \draw (9) to (7);

				      \draw (9) to (3); \draw (3) to (5);

				      % Mutant    
				      \draw (10) to (2); \draw (10) to (13);

				      \node at (2,0.9) {$N(u)$};

			      \end{tikzpicture}
			      \caption{Example situation during the proof of Lemma
				      \ref{mindeg6} (1). Unused or unknown vertices and edges
				      are omitted.}
			      \label{fig:mindeg6}
		      \end{figure}

		      We place one cop on $u$. We use Lemma \ref{chasinglemma3} to place
		      the robber on $x$ and the two other cops on $y_1, y_2$. During the
		      last move of this strategy, the cop on $u$ moves to $t$. It is now
		      the robber's turn. The robber on $x$ cannot move to a neighbour
		      inside of $B_u$ (there are cops on $y_1$ and $y_2$, and $y_3$ is
		      covered by the cop on $t$) and there are cops adjacent to the
		      robber. As $x \notin N[m']$, we know that $m \notin N(x)$. Thus,
		      the robber has no choice but to move to either $w_1$ or $w_2$. Let
		      us say the robber moves to $w_1$ (the strategy for $w_2$ is
		      analogous).

		      Denote by $a$ the vertex dominating the neighbours of $w_1$ in
		      $B_u$. We recall that $a$ is either $y_1$, $y_2$ or $x$. We now
		      move the cop on $t$ back to $u$. Of the two cops on $y_1$ and
		      $y_2$, one must be able to move to $a$, and does so. If $m'\in
		      B_u$ (that is, if we are not in the case of $\peter_0$), the third
		      cop moves to either $m'$ or a neighbour of $m'$. After this move,
		      all escapes in $N(u)$ are covered by the cop on $u$, all escapes
		      in $B_u$ are covered by the cop on $a$, and the robber cannot stay
		      still as $u$ is adjacent to $w_1$. Thus, the robber is caught one
		      move later unless the robber can move to $m$. In this case, the
		      third cop can now move to $m'$ and trap the robber. Leaving the
		      cops on $u$ and $m'$ fixed, the cop on $a$ can then go capture the
		      robber. This is a contradiction as $c(G) > 3$.

		\item As the proof will be very similar to the previous case, we outline
		      the main differences. We suppose the statement is false, so that
		      $m'$ has at most one neighbour in $N(u)$ (and so exactly 1, by
		      Lemma \ref{mindeg4}).

		      If $x=m'$ and $i \in \{5,6\}$, recall that $m'$ is chosen to be of
		      higher degree (or equal) than $m$, so $m'$ has exactly one
		      neighbour in $N(u)$, and $m$ has at most one neighbour in $N(u)$.
		      Then, we denote by $w_1$ and $w_2$ these neighbours and apply the
		      same strategy as above. Even though the robber will have chosen to
		      go to either $m'$ or $m$, both cases for its subsequent move will
		      be covered using the strategy above. In particular, we note that
		      the robber will not be able to stay on either $x=m'$ or $m$, as
		      both are adjacent to $y_1,y_2$, so the robber will indeed have to
		      move to either $w_1$ or $w_2$. Thus, either $m$ or $m'$ must have
		      2 or more neighbours in $N(u)$. As we have selected $m'$ to have
		      the greatest degree of the two, the statement follows for this
		      case.

		      Consider now that $x \in N[m']$ but $x \neq m'$ or $i \notin
		      \set{5,6}$ (as we covered that case above). Our goal is to prove
		      that $x$ cannot have a unique neighbour in $N(u)$. Since we are
		      supposing the contrary, let $w_1$ be a unique neighbour of $x$ in
		      $N(u)$. As in the above strategy, one cop's role will be to cover
		      the vertex dominating the neighbourhood of $w_1$ in $B_u$, or if
		      this is $x$, then to be on an adjacent vertex. We will also want a
		      cop to be on $m'$, or on a neighbour of $m'$ if $x=m'$. In the
		      notation of the original case, this could informally be seen as
		      considering $w_2$ to be $m$. If the robber moves to $w_1$, we
		      follow a similar strategy to above. If the robber moves to $m$,
		      then one of the cops will move to $m'$. Recalling that another
		      robber will return to $u$, the last cop will be able to go capture
		      the robber.
	\end{enumerate}
\end{proof}

We are now ready to prove the desired results.

\begin{proposition}\label{propn11}\label{propn12} If $G$ is a connected graph
	such that $\Delta\in \{n-12,n-11\}$ and $n \leq 18$, then $c(G)\leq 3$.
\end{proposition}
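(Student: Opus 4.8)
The plan is to argue by contradiction: assume $c(G)>3$ and show that $G$ must then have at least $19$ vertices, contradicting $n\le 18$. The first step is to reduce to the setting of Hypothesis~\ref{labelhypo}. Fix a vertex $u$ of maximum degree, so that $|V(G-N[u])|=n-1-\Delta\in\{10,11\}$, and let $K$ be the union of those connected components of $G-N[u]$ whose cop number is at most $2$; set $H=G-K$. By Corollary~\ref{retractcomponents} (whose proof exhibits the retraction collapsing $K$ onto $u$), $H$ is a retract of $G$, hence connected, and $c(H)\le c(G)\le\max\{c(H),c(K)+1\}$; since $c(K)\le 2<c(G)$, this forces $c(H)=c(G)>3$. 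Because $K$ avoids $N_G[u]$ we have $N_H[u]=N_G[u]$, so $H-N[u]$ is exactly the union of the components of $G-N[u]$ of cop number at least $3$, and it is nonempty (otherwise $H=\langle N[u]\rangle$ would be cop-win). By Theorem~\ref{min3} each such component has at least $10$ vertices, so since $|V(G-N[u])|\le 11$ there is precisely one, on $10$ or $11$ vertices; by Theorem~\ref{min3} and Proposition~\ref{classification11} it is isomorphic to $\peter_i$ for some $0\le i\le 6$. Thus $H$ together with the vertex $u$ satisfies Hypothesis~\ref{labelhypo}, and Lemmas~\ref{mindeg4}--\ref{mindeg6} apply to $H$.

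The second step is a double count of the edges joining $B_u=V(H-N[u]-m)$ to $N_H(u)$. Since $\langle B_u\rangle\simeq\peter_0$ is $3$-regular, Lemma~\ref{dominatingneighbourhood} gives $|N(w)\cap B_u|\le 4$ for every $w\in N_H(u)$, so there are at most $4\,d_H(u)$ such edges. Conversely $B_u$ has $10$ vertices, and Lemma~\ref{mindeg6} bounds $\sum_{x\in B_u}|N(x)\cap N(u)|$ from below: when $i=0$ (no $m'$) each vertex contributes at least $3$, for a total of at least $30$; when $1\le i\le 6$ the four vertices of $N[m']\cap B_u$ contribute at least $2$ each and the remaining six at least $3$ each, for a total of at least $4\cdot2+6\cdot3=26$. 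Comparing the two estimates gives $d_H(u)\ge 8$ if $i=0$ and $d_H(u)\ge 7$ if $1\le i\le 6$.

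The last step converts this into a vertex count: $|V(H)|=1+d_H(u)+|V(H-N[u])|$ equals $11+d_H(u)\ge 19$ when $i=0$ and $12+d_H(u)\ge 19$ when $1\le i\le 6$. Since $H$ is an induced subgraph of $G$ we conclude $n=|V(G)|\ge|V(H)|\ge 19$, contradicting $n\le 18$. I expect no genuinely hard step to remain at this stage, since all of the strategic content is already packaged in Lemmas~\ref{mindeg4}--\ref{mindeg6}; the points that need care are the reduction to Hypothesis~\ref{labelhypo} --- in particular checking that after collapsing the low-cop-number components exactly one Petersen-type graph survives as $H-N[u]$ --- and keeping both counting inequalities exact.
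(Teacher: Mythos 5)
Your proof is correct and follows essentially the same route as the paper: reduce to Hypothesis \ref{labelhypo} and double-count the edges between $N(u)$ and $B_u$ using Lemmas \ref{dominatingneighbourhood} and \ref{mindeg6}. The only differences are cosmetic --- you collapse all low-cop-number components of $G-N[u]$ at once via Corollary \ref{retractcomponents} instead of the paper's two-step case analysis for the disconnected case, and you phrase the contradiction as $n\geq 19$ rather than as incompatible edge counts.
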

\begin{proof}\item
	\begin{enumerate}
		\item We consider $\Delta=n-11$. Let $u$ be a vertex of maximum degree.
		      We know that $|V(G-N[u])|=10$. If $G-N[u]$ is disconnected, each
		      one of its connected components has cop number at most 2, as no
		      connected component can contain at least 10 vertices. Applying
		      Corollary \ref{retractcomponents} yields the desired result.
		      Otherwise, $G-N[u]$ must be connected. Suppose that $c(G)>3$.
		      Then, $c(G-N[u])>2$, and by Theorem \ref{min3}, $G-N[u]$ must be
		      isomorphic to $\peter_0$. Then, $G$ and $u$ satisfy the conditions
		      of Hypothesis \ref{labelhypo}.

		      By Lemma \ref{mindeg6}, every vertex of $B_u$ has at least 3
		      neighbours in $N(u)$ (recall that $m'$ does not exist in $\mathcal
		      P_0$). As $B_u$ has 10 vertices, this means there are at least 30
		      edges between $N(u)$ and $B_u$.

		      As $n\leq 18$, we have that $\Delta\leq 7$. By Lemma
		      \ref{dominatingneighbourhood}, every vertex of $N(u)$ has at most
		      $4$ neighbours in $B_u$. Thus, there are at most $4\Delta\leq 28$
		      edges between $N(u)$ and $B_u$. This is a contradiction, as we
		      have claimed there are at least 30 but at most 28 edges between
		      $N(u)$ and $B_u$. Thus, $c(G)\leq 3$.

		\item We consider $\Delta=n-12$. Let $u$ be a vertex of maximum degree.
		      We know that $|V(G-N[u])|=11$. Let us first consider the case
		      where $G-N[u]$ is disconnected. If every connected component has
		      cop number at most 2, then, as in the previous case, we are done.
		      By Theorem \ref{min3}, the only other case is if one component is
		      isomorphic to $\peter_0$ and the other is an isolated vertex $x$.
		      By applying Corollary \ref{retractcomponents} (or Corollary
		      \ref{removecorner}), $c(G)\leq 3$ if and only if $c(G-x)\leq 3$.
		      As $G-x$ satisfies the conditions of the previous case of this
		      proposition, we conclude that $c(G-x)\leq 3$.

		      We may now consider that $G-N[u]$ is connected. Suppose $c(G)>3$.
		      By Proposition \ref{classification11}, $G-N[u]\simeq \peter_i$,
		      for some $1\leq i\leq 6$. Then, $G$ and $u$ satisfy the condition
		      of Hypothesis \ref{labelhypo}.

		      By Lemma \ref{mindeg6}, every vertex of $B_u$ has at least 2
		      neighbours in $N(u)$, and in fact those not in $N[m']$ (of which
		      there are at least 6) have at least 3 neighbours in $N(u)$. In
		      total, there are at least 26 edges between $N(u)$ and $B_u$.

		      By Lemma \ref{dominatingneighbourhood}, each vertex of $N(u)$ has
		      at most $4$ neighbours in $B_u$. As $n \leq 18$, we have that
		      $\Delta\leq 6$. Thus, there are at most $4\Delta\leq 24$ edges
		      between $N(u)$ and $B_u$. This is a contradiction, as we have
		      claimed there are at least 26 but at most 24 edges between $N(u)$
		      and $B_u$, hence $c(G)\leq 3$.
	\end{enumerate}
\end{proof}

These results will be used to prove that $M_4=19$ in Section \ref{mainsection}.
Readers only interested in the main result may skip the remainder of this
section, as it concerns reducing the number of possible 4-cop-win graphs on 19
vertices. We first need the following definition.

\begin{definition}\label{projectionclaw} Consider Hypothesis \ref{labelhypo}.
	Let $w\in N(u)$ such that $|B_u\cap N(w)|=4$. The vertex $x\in B_u$
	dominating $B_u\cap N(w)$ will be called the \emph{projection} of $w$. If
	$x$ is the projection of $k$ vertices of $N(u)$, we will call $p(x)=k$ the
	\emph{projection multiplicity} of $x$.
\end{definition}

By Lemma \ref{dominatingneighbourhood}, this is well defined and the projection
of a vertex is unique.

\begin{observation}
	\label{observation:on_projective_multiplicity}
	Consider Hypothesis \ref{labelhypo}. If $x\in B_u$, $| N(x) \cap N(u)|\geq
	\sum_{y\in N[x]\cap B(u)} p(y)$. In particular, if $y \in B_u$ has
	projection multiplicity $k$, then each vertex in $N(y) \cap B_u$ has at
	least $k$ neighbours in $N(u)$.
\end{observation}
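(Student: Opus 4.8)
The plan is to prove the displayed inequality by double-counting the vertices of $N(u)$ adjacent to $x$, sorted according to their projection, and then to read off the ``in particular'' clause as a one-term special case.

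First I would pin down what a projection looks like. Since $\langle B_u\rangle\simeq\peter_0$ is $3$-regular, each $y\in B_u$ has $|N[y]\cap B_u|=4$. Hence, if $w\in N(u)$ satisfies $|N(w)\cap B_u|=4$ with projection $x$, then $N(w)\cap B_u$ is a $4$-element subset of the $4$-element set $N[x]\cap B_u$, so in fact $N(w)\cap B_u=N[x]\cap B_u$. Thus a vertex $w\in N(u)$ has a projection precisely when $N(w)\cap B_u=N[y]\cap B_u$ for some $y\in B_u$, and this $y$ is unique (by Lemma~\ref{dominatingneighbourhood}, as noted after Definition~\ref{projectionclaw}); so every $w\in N(u)$ is the projection-preimage of at most one vertex of $B_u$.

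Now fix $x\in B_u$. For any $y\in N[x]\cap B_u$ we also have $x\in N[y]\cap B_u$, since membership in a closed neighbourhood is symmetric and $x,y\in B_u$. Let $w$ be one of the $p(y)$ vertices of $N(u)$ with projection $y$; by the previous paragraph $N(w)\cap B_u=N[y]\cap B_u\ni x$, so $w\in N(x)\cap N(u)$. As $y$ ranges over $N[x]\cap B_u$ these preimage sets are pairwise disjoint (a vertex of $N(u)$ has at most one projection) and each sits inside $N(x)\cap N(u)$, so $|N(x)\cap N(u)|\geq\sum_{y\in N[x]\cap B_u}p(y)$. For the last sentence of the statement, if $p(y)=k$ and $x'\in N(y)\cap B_u$, then $y\in N[x']\cap B_u$, and keeping only the term indexed by $y$ in the inequality applied to $x'$ gives $|N(x')\cap N(u)|\geq k$.

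This is a bookkeeping argument rather than a genuinely hard one; the only two points needing care are that projections are unique (so that the preimage sets are disjoint), which is exactly Lemma~\ref{dominatingneighbourhood}, and the symmetry step $y\in N[x]\iff x\in N[y]$, including the harmless boundary case $y=x$, which contributes the term $p(x)$ counting the vertices of $N(u)$ that project onto $x$ itself; by the $4$-element neighbourhood equality these too are adjacent to $x$, so the inequality is not disturbed.
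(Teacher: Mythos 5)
Your proof is correct and follows essentially the same route as the paper: both arguments observe that a vertex $w\in N(u)$ with a projection $y$ satisfies $N(w)\cap B_u=N[y]\cap B_u$, use the uniqueness of projections to see that the preimage sets over $y\in N[x]\cap B_u$ are pairwise disjoint subsets of $N(x)\cap N(u)$, and sum. You simply spell out the set-equality and disjointness steps that the paper leaves implicit.
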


\begin{proof}
	We recall that when a vertex $y$ has projection multiplicity $k$, this means
	that $k$ vertices of $N(u)$ have for neighbours in $B_u$ exactly $N[y]\cap
	B_u$, giving each vertex in this set at least $k$ neighbours in $N(u)$.

	Noting that the projection of a vertex is unique, we see that the neighbours
	that $x$ inherits from each projection on $x$ or on its neighbours in $B_u$
	are pairwise distinct. The lower bound follows immediately by summing the
	projection multiplicity for each vertex in $N[x]$.
\end{proof}

We now see an interesting property of projections.

\begin{lemma}\label{lemmaprojection} Consider Hypothesis \ref{labelhypo}. Let
	$x\in B_u\setminus N[m']$.
	\begin{enumerate}
		\item If $|N(x)\cap N(u)|=3$, then $p(x)=0$.
		\item More generally, $p(x)\leq |N(x)\cap N(u)|-2$.
	\end{enumerate}
\end{lemma}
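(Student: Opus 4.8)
Note that part (1) is the special case $|N(x)\cap N(u)|=3$ of part (2), since then the bound $p(x)\le 1$ must be improved to $p(x)=0$; so the real work is showing that $x$ cannot be the projection of $|N(x)\cap N(u)|-1$ or more vertices of $N(u)$ when $x\notin N[m']$. The plan is to suppose toward a contradiction that $p(x)\ge |N(x)\cap N(u)|-1$ and build an explicit winning strategy for $3$ cops, following the now-standard template of this section: park one cop on $u$ so the robber is confined to $G-N[u]\simeq\peter_i$, use the chasing lemma (Lemma \ref{chasinglemma3} or \ref{chasinglemma1}) to manoeuvre the robber onto $x$ with the other two cops on chosen vertices of $B_u$, then release the cop on $u$ at the final move to plug the robber's remaining escapes in $N(u)$.

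Here is the structure I would carry out. Write $|N(x)\cap N(u)|=k$ and let $y_1,y_2,y_3$ be the neighbours of $x$ in $B_u$. If $p(x)\ge k-1$, then at least $k-1$ vertices of $N(u)$ have neighbourhood in $B_u$ exactly equal to $N[x]\cap B_u=\{x,y_1,y_2,y_3\}$; call these $w_1,\dots,w_{k-1}$, and they are distinct from any further neighbour $w_k$ of $x$ in $N(u)$ (there is at most one such, since $p(x)\le k$ would force a contradiction with Lemma \ref{dominatingneighbourhood}'s uniqueness only in the extreme case). The key observation is that each $w_j$ ($j\le k-1$) is dominated in $B_u$ by $x$ itself, so a single cop sitting on $x$ simultaneously guards $w_1,\dots,w_{k-1}$ — this is where the hypothesis $p(x)\ge k-1$ is used. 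Using Lemma \ref{chasinglemma3}, place a cop on $u$, bring the robber to $x$, and place the two free cops on $y_1$ and $y_2$ with it being the robber's turn; because $x\notin N[m']$ we have $m\notin N(x)$, so the robber's only moves are to $y_3$ (guarded after a cop steps there, using that $\langle B_u\rangle$ has diameter $2$) or into $\{w_1,\dots,w_k\}\cup(N(x)\cap N(u))$. Since $y_3$ needs a guard I would instead bring the free cops to $y_1,y_3$ (or all three of $y_1,y_2,y_3$ are handled by noting that after the $u$-cop moves to $w_k$ and one free cop covers the last $y_i$, and one free cop goes to $x$), the robber is forced onto some $w_j$; but $x$ already dominates $N(w_j)\cap B_u$ and $u$ dominates $N(w_j)\cap N(u)\ni u$, so after the cop on $u$ retreats and a cop moves to $x$, the robber on $w_j$ is trapped and caught within a few moves — in the $\peter_i$, $1\le i\le 6$ cases, using the third cop to cover $m'$ as in Lemmas \ref{mindeg4} and \ref{mindeg6} handles any residual escape through $m$.

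The bookkeeping obstacle, and the part I expect to require the most care, is matching cop count to escape routes: the robber on $x$ has three neighbours $y_1,y_2,y_3$ in $B_u$ plus up to $k$ neighbours in $N(u)$, and we have only three cops total, one of which is temporarily on $u$. The resolution is exactly the leverage provided by $p(x)\ge k-1$: the $k-1$ "pure" escapes $w_1,\dots,w_{k-1}$ collapse to a single vertex $x$ to guard, so effectively the robber has at most $3+1$ relevant escape vertices ($y_1,y_2,y_3$ and "the $w$'s"), manageable by leaving one cop on $u$ (covering the $N(u)$-side of any $w_j$), using the chasing lemma to pre-position two cops on two of the $y_i$, and freeing the $u$-cop at the last instant to cover $y_3$ or $w_k$; then the endgame on $w_j$ needs only that $x$ dominates $N(w_j)\cap B_u$. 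One must also separately dispose of the borderline possibility that $x$ has a fourth projected vertex making the counting tight, and the special subcases $i\in\{5,6\}$ where the robber may sit on $m$ rather than $m'$; both are handled verbatim as in the proof of Lemma \ref{mindeg6}(2), moving the third cop onto $m'$ (the higher-degree twin) to seal $m$. Once the strategy is exhibited in each case, $c(G)\le 3$ contradicts Hypothesis \ref{labelhypo}, proving both parts.
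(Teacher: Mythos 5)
There is a genuine gap: your argument proves part (2) but not part (1). You open by asserting that part (1) is "the special case $|N(x)\cap N(u)|=3$ of part (2)" and that the real work is ruling out $p(x)\ge |N(x)\cap N(u)|-1$; but ruling that out only yields $p(x)\le |N(x)\cap N(u)|-2$, which for $|N(x)\cap N(u)|=3$ gives $p(x)\le 1$, not $p(x)=0$. Part (1) requires separately excluding the configuration $p(x)=1$ with $k=3$, and there the combinatorics of your strategy changes: $x$ now has \emph{two} non-projecting neighbours $t_1,t_2$ in $N(u)$, so the "collapse all the $w_j$'s to a single guard" leverage you rely on (which presupposes at most one non-projecting neighbour) is unavailable. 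The paper's proof of part (1) handles exactly this case by sending the cop from $u$ to the unique projecting vertex $w$ — which by itself dominates all of $N[x]\cap B_u$, sealing every escape inside $B_u$ — and dedicating the two free cops to $t_1$ and $t_2$ individually, placing cop $r_i$ on a $B_u$-neighbour of $t_i$ other than $x$ when one exists, and otherwise on a neighbour of $x$ so that it can step onto $x$ (the sole $B_u$-neighbour of $t_i$) after the robber flees to $t_i$ and the $w$-cop retreats to $u$. Your writeup, having assumed away the second non-projecting neighbour, never confronts this case.

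There are also tactical slips in your part (2) sketch worth flagging, though they are repairable. You propose that the cop leaving $u$ go to the non-projecting vertex $w_k$ and that "one free cop goes to $x$"; but the robber is sitting on $x$ at that moment, and a cop on $w_k$ does not cover the third neighbour $y_3$ of $x$ in $B_u$ (only the projecting vertices are adjacent to all of $N[x]\cap B_u$). The working version, as in the paper, sends the $u$-cop to a \emph{projecting} vertex (covering $x,y_1,y_2,y_3$ at once), keeps one free cop on a neighbour of $x$ in $B_u$ (which is automatically adjacent to every vertex projecting onto $x$, blocking all of them), and reserves the other free cop for the at most one non-projecting neighbour $t_1$, with the same $r_1$ fallback as in part (1). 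With those corrections, and your (correct) use of the $m'$-guard for the residual escape through $m$, part (2) goes through; part (1) still needs its own argument.
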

\begin{proof}\item
	\begin{enumerate}
		\item Supposing the contrary, we give a winning strategy for 3 cops.
		      Let us say that $x$ is the projection of a vertex $w$ of $N(u)$:
		      $w$ is adjacent to $x$ and to each neighbour of $x$ in $B_u$. As
		      $|N(x)\cap N(u)|=3$, $x$ has two other neighbours in $N(u)$, which
		      we will denote by $t_1$ and $t_2$. If $t_1$ has a neighbour in
		      $B_u$ other than $x$, choose one and denote it $r_1$. If not, then
		      choose $r_1$ to be any neighbour of $x$ in $B_u$. We choose $r_2$
		      similarly. The situation is portrayed in Figure
		      \ref{fig:lemmaprojection}.

		      \begin{figure}[h]
			      \begin{tikzpicture}[scale=3, dot/.style = {circle, fill,
							      minimum size=#1, inner sep=0pt, outer
							      sep=0pt}, dot/.default = 5pt
							      % size of the circle diameter 
				      ]
				      %   Outside
				      \node [dot] (0) at (0, 1) [label={above left: $r_2$}]{};
				      \node [dot] (2) at (-0.951057, 0.309017)  {}; \node [dot]
				      (8) at (-0.587785,-0.809017)  {}; \node [dot] (9) at
				      (0.587785,-0.809017) [label={below right: $r_1$}] {};
				      \node [dot] (3) at (0.951057,0.309017)  [label={above
				      right: $x$}] {};

				      %   Inside
				      \node [dot] (1) at (0,0.5*1)  {}; \node [dot] (4) at
				      (0.5*-0.951057, 0.5*0.309017)  {}; \node [dot] (6) at
				      (0.5*-0.587785,0.5*-0.809017) {}; \node [dot] (7) at
				      (0.5*0.587785,0.5*-0.809017)  {}; \node [dot] (5) at
				      (0.5*0.951057, 0.5*0.309017) {};

				      \node [dot] (11) at (3, 0) [label={above: $u$}]{}; \node
				      [dot] (12) at (2, -0.4) [label={above: $t_1$}]{}; \node
				      [dot] (13) at (2, 0) [label={above: $w$}]{}; \node [dot]
				      (14) at (2, 0.4) [label={above: $t_2$}]{};

				      \draw (2,0) ellipse (0.4 and 0.75);

				      \draw (8) to (12); \draw (9) to (12);

				      \draw (5) to (13);

				      \draw (3) to (14);

				      \draw (0) to (14); \draw (11) to (12);

				      \draw (11) to (13); \draw (11) to (14);

				      \draw (9) to (13); \draw (0) to (13);

				      \draw (3) to (13); \draw (3) to (12);

				      \draw (0) to (1); \draw (0) to (2); \draw (0) to (3);

				      \draw (1) to (6); \draw (1) to (7);

				      \draw (2) to (8); \draw (2) to (4);

				      \draw (4) to (5); \draw (4) to (7);

				      \draw (5) to (6);

				      \draw (8) to (6); \draw (8) to (9);

				      \draw (9) to (7);

				      \draw (9) to (3); \draw (3) to (5);

				      \node at (2,0.9) {$N(u)$};

			      \end{tikzpicture}
			      \caption{Example situation during the proof of Lemma
				      \ref{lemmaprojection} (1). Unused or unknown vertices and
				      edges are omitted.}
			      \label{fig:lemmaprojection}
		      \end{figure}
		      We start by placing one cop on $u$. Using Lemma
		      \ref{chasinglemma3} (recall that $x\neq m'$, which avoids the
		      exceptional case), we place the robber on $x$ and the two other
		      cops on $r_1$ and $r_2$ (if $r_1=r_2$, then place one cop there
		      and another cop on any vertex in $B_u\setminus\{x,r_1\}$). During
		      the last move of that strategy, move the cop from $u$ to $w$. It
		      is now the robber's turn. As there is a cop on $w$, the robber
		      cannot stay in $B_u$. As $x\notin N[m']$, $x$ is not adjacent to
		      $m$. If $t_1$ had a neighbour in $B_u$ other than $x$, then the
		      cop on $r_1$ blocks the robber from moving to $t_1$, and similarly
		      for $t_2$.

		      Thus, the only scenario in which the robber does not get captured
		      immediately after moving is if (without loss of generality), $t_1$
		      only has one neighbour in $B_u$, and the robber moves to $t_1$. In
		      this case, we chose $r_1$ to be some neighbour of $x$. The cop on
		      $w$ moves back to to $u$, and the cop on $r_1$ moves to $x$. The
		      third cop moves to $m'$ or a neighbour of $m'$ (if $m'$ is in the
		      graph). The robber is caught one turn later, as $x$ is the only
		      neighbour of $t_1$ in $B_u$ and $u$ dominates $N(u)$, unless the
		      robber can move to $m$. In this case, the third cop can move to
		      $m'$ and trap the robber. The cop on $x$ can capture the robber
		      within a few turns. This contradicts that $c(G)>3$.

		\item The strategy is similar to the previous case. Suppose, to the
		      contrary, that $x$ has projection multiplicity at least $|N(x)\cap
		      N(u)|-1$. Then, there is at most 1 neighbour of $x$ in $N(u)$
		      which does not project onto $x$. Choose $t_1$ to be this vertex
		      (if there is any) and select the corresponding $r_1$ as above.
		      Choose $r_2$ to be any other neighbour of $x$ in $B_u$: $r_2$
		      covers all vertices projecting onto $x$. The rest of the strategy
		      is identical.
	\end{enumerate}
\end{proof}

We are now ready for the desired result.

\begin{proposition}\label{propositionn19D78} If $G$ is a connected graph such
	that $n=19$ and $\Delta\in \{7,8\}$, then $c(G)\leq 3$.
\end{proposition}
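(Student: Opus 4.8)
The plan is to argue by contradiction: suppose $c(G)>3$ and let $u$ be a vertex of maximum degree. Since $n=19$, the hypotheses $\Delta=8$ and $\Delta=7$ are precisely $\Delta=n-11$ and $\Delta=n-12$, so $|V(G-N[u])|$ equals $10$ and $11$ respectively, and the argument will parallel that of Proposition~\ref{propn11}. The new feature is that the crude count of the edges between $N(u)$ and $B_u$ used there (which here only yields $30\le E\le 32$ when $\Delta=8$, and $26\le E\le 28$ when $\Delta=7$) is no longer enough, so I will refine it using the projection machinery of Lemma~\ref{lemmaprojection} and Observation~\ref{observation:on_projective_multiplicity}.

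I would first reduce to Hypothesis~\ref{labelhypo}. If $G-N[u]$ is disconnected, then, exactly as in Proposition~\ref{propn11}, either every component has at most $9$ vertices (hence cop number at most $2$ by Theorem~\ref{min3}), or---only possible for $\Delta=7$---one component is $\peter_0$ and the other is a single vertex $x$, which is a corner, so Corollary~\ref{removecorner} reduces the claim to $c(G-x)\le 3$ where $G-x$ has $18$ vertices and maximum degree $7=18-11$, to which Proposition~\ref{propn11} applies; either way Corollary~\ref{retractcomponents} gives $c(G)\le 3$, a contradiction. So $G-N[u]$ is connected, and since $c(G-N[u])\le 2$ would again force $c(G)\le 3$, we have $c(G-N[u])\ge 3$. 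By Theorem~\ref{min3} (when $\Delta=8$) and Proposition~\ref{classification11} (when $\Delta=7$) this forces $G-N[u]\simeq\peter_0$, resp.\ $G-N[u]\simeq\peter_i$ for some $1\le i\le 6$. In both cases Hypothesis~\ref{labelhypo} holds, $|B_u|=10$, and $\langle B_u\rangle\simeq\peter_0$ is $3$-regular.

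Write $d(x)=|N(x)\cap N(u)|$ for $x\in B_u$ and let $E$ be the number of edges between $N(u)$ and $B_u$. Three inputs drive the refinement: (i) each $x\in B_u$ already has three neighbours inside $B_u$, so $\Delta\le 7$ forces $d(x)\le 4$ (and $\Delta\le 8$ forces $d(x)\le 5$); (ii) by Lemma~\ref{lemmaprojection}, for $x\notin N[m']$ one has $p(x)=0$ when $d(x)=3$ and $p(x)\le d(x)-2$ in general; (iii) writing $P=\sum_{x\in B_u}p(x)$ for the number of vertices of $N(u)$ with the maximal $4$ neighbours in $B_u$ (Lemma~\ref{dominatingneighbourhood}), one has $E\le 4P+3(\Delta-P)$, while $E=\sum_{x\in B_u}d(x)$ is bounded below by Lemma~\ref{mindeg6}. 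When $\Delta=8$ there is no $m'$, so (ii) and $E=\sum_x d(x)$ give $P\le E-30+\ell$ where $\ell=|\{x\in B_u:d(x)\ge 4\}|$; together with $P\ge E-24$ this forces $\ell\ge 6$, whence $E\ge 4\ell+3(10-\ell)\ge 36$, contradicting $E\le 32$. When $\Delta=7$, split $B_u=A\sqcup B$ with $A=N[m']\cap B_u$---which is exactly the closed neighbourhood of $m'$ inside $\langle B_u\rangle$, so $|A|=4$---and put $\beta=|\{x\in B:d(x)=4\}|$. Then $\sum_{x\in A}p(x)\le d(m')$ by Observation~\ref{observation:on_projective_multiplicity} applied at $m'$, while $\sum_{x\in B}p(x)\le 2\beta$, $\sum_{x\in B}d(x)=18+\beta$, $\sum_{x\in A}d(x)\ge d(m')+6$ by Lemma~\ref{mindeg6}(2), and $E\le 28$; feeding these into $E\le 4P+3(7-P)$ rearranges to $d(m')+2\beta\ge d(m')+\beta+3$, i.e.\ $\beta\ge 3$, whereas $E\le 28$ and $E\ge d(m')+\beta+24$ give $d(m')+\beta\le 4$, hence $\beta\le 4-d(m')\le 2$. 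Both cases yield a contradiction, so $c(G)\le 3$.

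The main obstacle I anticipate is the $\Delta=7$ case: the four vertices of $N[m']\cap B_u$ carry only the weak lower bound $d(x)\ge 2$ (Lemma~\ref{mindeg6}(2)) and fall outside the scope of Lemma~\ref{lemmaprojection}, so their projection multiplicities cannot be bounded one at a time. The trick that makes the count close is to recognize $N[m']\cap B_u$ as the closed neighbourhood of $m'$ in $\langle B_u\rangle$ and to bound the sum of their projection multiplicities all at once by $d(m')$ through Observation~\ref{observation:on_projective_multiplicity}; after that, carefully balancing the sums over $A$ and over $B$ of degrees and of projection multiplicities against $E\le 4|N(u)|$ is what produces the contradiction.
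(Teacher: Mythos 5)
Your proposal is correct and takes essentially the same approach as the paper: reduce to Hypothesis~\ref{labelhypo} exactly as in Proposition~\ref{propn11}, then double-count the edges between $N(u)$ and $B_u$ using Lemmas~\ref{dominatingneighbourhood}, \ref{mindeg6}, \ref{lemmaprojection} and Observation~\ref{observation:on_projective_multiplicity}. The only difference is organizational: you extract the contradiction from a single chain of inequalities (in $\ell$, resp.\ in $\beta$ and $|N(m')\cap N(u)|$), whereas the paper runs a short case analysis on the number of vertices of $B_u$ with an extra neighbour in $N(u)$; both are valid and rest on the same counting.
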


\begin{proof}
	Suppose $c(G)>3$. Let $u$ be a vertex of maximal degree in $G$.
	\begin{enumerate}

		\item We consider $\Delta=8$. Recall the arguments of the proof of
		      Proposition \ref{propn11}. In particular, we can consider that
		      $G-N[u]\simeq \peter_0$.

		      There are at most $4\Delta=32$ edges between $N(u)$ and $B_u$, by
		      Lemma \ref{dominatingneighbourhood}. By Lemma \ref{mindeg6}, each
		      vertex in $B_u$ has at least 3 neighbours in $N(u)$: there are at
		      least 30 edges between $B_u$ and $N(u)$. Thus, there are at most 2
		      extra edges. By extra edges, we mean that these are edges which,
		      if removed, would leave each vertex in $B_u$ with exactly the
		      lower bound number of neighbours in $B_u$, as specified in Lemma
		      \ref{mindeg6}. Then, there are at least 8 vertices in $B_u$ which
		      have no extra edges, having exactly 3 neighbours in $N(u)$.

		      Furthermore, if there are fewer than $6$ vertices of $N(u)$ that
		      each have exactly 4 neighbours in $B_u$, then there cannot be at
		      least 30 edges between $N(u)$ and $B_u$. Thus, $\sum_{x\in B_u}
		      p(x)\geq 6$.

		      Recall that Lemma \ref{lemmaprojection} states that no vertex in
		      $N(u)$ with exactly 3 neighbours in $B_u=B_u\setminus N[m']$ can
		      be a projection. Hence, only the vertices with extra edges may
		      have non-zero projection multiplicity. The first consequence of
		      this is that it is impossible for all vertices of $B_u$ to have
		      exactly 3 neighbours in $N(u)$. Furthermore, we have seen that
		      there are at most 2 vertices which have extra edges (these are the
		      vertices of $B_u$ that have either $4$ or $5$ neighbours in
		      $N(u)$). Denote them by $a_1,a_2$ (if there is only one vertex,
		      let $a_1=a_2$). Then, $p(a_1)+p(a_2)\geq 6$ (if $a_1=a_2$, then simply
		      say $p(a_1)\geq 6$). Let $x\in N[a_1]\cap N[a_2]\cap B_u$ (which
		      exists as $\peter_0$ has diameter 2). As $x$ is adjacent to all
		      projections, $x$ must be adjacent at least 6 vertices of $N(u)$
		      (Observation \ref{observation:on_projective_multiplicity}). As $x$
		      also has 3 neighbours in $B_u$, the degree of $x$ is at least $9$,
		      which is a contradiction.

		\item We consider $\Delta=7$. Recall the arguments of the proof of
		      Proposition \ref{propn11}. In particular, we can say that
		      $G-N[u]\simeq \peter_i$, for some $1\leq i \leq 6$.

		      There are at most $4\Delta=28$ edges between $N(u)$ and $B_u$, by
		      Lemma \ref{dominatingneighbourhood}. By Lemma \ref{mindeg6}, each
		      vertex in $B_u\setminus N[m']$ has at least 3 neighbours in $N(u)$
		      and each vertex in $B_u\cap N[m']$ has at least 2 neighbours in
		      $N(u)$: in total, there are at least 26 edges between $B_u$ and
		      $N(u)$. Using the same argument as above, depending on the number
		      of edges between $B_u$ and $N(u)$, we can find between 5 and 7
		      vertices in $N(u)$ which have 4 neighbours each in $B_u$, and thus
		      the total projection multiplicity of $B_u$ is as follows.
		      \begin{enumerate}
			      \item 26 edges: $\sum_{x\in B_u} p(x)\geq 5$;
			      \item 27 edges: $\sum_{x\in B_u} p(x)\geq 6$;
			      \item 28 edges: $\sum_{x\in B_u} p(x)= 7$ (as there are
			      exactly $7$ vertices in $N(u)$ there cannot be more than $7$
			      projections).
		      \end{enumerate}

		      Recall that Lemma \ref{lemmaprojection} states that no vertex in
			      $B_u \setminus N[m']$ with 3 neighbours in $N(u)$ can be a
			      projection. Also, if $x \in B_u \setminus N[m']$, $p(x) \leq
			      |N(x) \cap N(u)|-2$: if $x$ has 4 neighbours in $N(u)$ it can
			      be the projection of at most 2 vertices.

		      If all vertices in $B_u \setminus N[m']$ have exactly 3 neighbours
		      in $N(u)$, then this implies all projections will be vertices in
		      $N[m']$: at least 5 vertices project on $m'$ or on a neighbour.
		      Thus, $m'$ will have at least 5 neighbours in $N(u)$. As $m'$ also
		      has at least 3 neighbours in $B_u$, $d(m')\geq 8$, which is
		      impossible as $\Delta=7$. This situation includes the case in
		      which there are exactly 26 edges between $B_u$ and $N(u)$.

		      Suppose there is exactly one vertex $x$ of $B_u\setminus N[m']$
		      with exactly 4 neighbours in $N(u)$, with all others having
		      exactly 3. This vertex will have projection multiplicity at most
		      2, so the total projection multiplicity of vertices of $N[m']$ is
		      at least 4. Thus, $m'$ will have at least 4 neighbours in $N(u)$,
		      which is impossible as this would imply there are 29 edges between
		      $B_u$ and $N(u)$ ($x$ has 4, the other 5 vertices in $B_u\setminus
		      N[m']$ have 3, $m'$ has at least 4, and each of the 3 vertices of
		      $B_u\cap N(m')$ has at least 2).

		      Suppose now there are 2 vertices $x_1,x_2$ of $B_u \setminus
		      N[m']$ with 4 neighbours in $N(u)$. These two additional edges
		      bring the total to 28. Thus, the total projection multiplicity is
		      7. There are at most 2 vertices projecting onto $x_1$ and 2
		      vertices projecting on $x_2$ (and none on the other vertices in
		      $B_u\setminus N[m']$). Thus, at least 3 vertices project onto
		      vertices in $N[m']$. This a contradiction, as $m'$ must have
		      exactly 2 neighbours in $N(u)$, otherwise there would be more than
		      28 edges between $B_u$ and $N(u)$. Considering that with
		      $\Delta=7$, no vertex of $B_u$ can have 5 or more neighbours in
		      $N(u)$, there are no cases left.
	\end{enumerate}

	In all possible cases, a contradiction was found. Thus, $c(G) \leq 3$.
\end{proof}

%%%%%%%%%%%%%%%%%%%%%%%%%%%%%%%%%%%%%%%%%%%%%%%%%%%%%%%%%%%%%%%%%%%
%%%%%%%%%%%%%%%%%%%%%%%%%%%%%%%%%%%%%%%%%%%%%%%%%%%%%%%%%%%%%%%%%%%
%%%%%%%%%%%%%%%%%%%%%%%%%%%%%%%%%%%%%%%%%%%%%%%%%%%%%%%%%%%%%%%%%%%
%%%%%%%%%%%%%%%%%%%%%%%%%%%%%%%%%%%%%%%%%%%%%%%%%%%%%%%%%%%%%%%%%%%
%%%%%%%%%%%%%%%%%%%%%%%%%%%%%%%%%%%%%%%%%%%%%%%%%%%%%%%%%%%%%%%%%%%

\section{Graphs with maximum degree 3}\label{maxdeg3section} In this section, we
consider the cop number of graphs with maximum degree 3. We start with the main
result of this section.

\begin{proposition}\label{propdegree3} If $G$ is a connected graph such that
	$\Delta \leq 3$ and $n \leq 20$, then $c(G)\leq 3$.
\end{proposition}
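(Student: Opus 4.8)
The plan is to reduce this to a finite computation about subcubic graphs. We want to show that any connected graph $G$ with $\Delta(G)\le 3$ and $n(G)\le 20$ satisfies $c(G)\le 3$. The natural approach is exhaustive search: generate all connected graphs with maximum degree at most $3$ and at most $20$ vertices (using \texttt{geng} with the appropriate degree and connectivity constraints, exactly as described in Section~\ref{smallgraphssections}), and run the cop-number algorithm for $k=3$ on each of them, checking that the algorithm reports $c(G)\le 3$ in every case. The output of this search should be recorded in a table (the excerpt already refers to ``Table~\ref{table:deg3}''), and the code and intermediate data placed in the online repository \cite{turcotte_code_2020}.

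To make such a search feasible, I would first apply the structural reductions already available in the paper to prune the search space drastically. By Corollary~\ref{removecorner}, we may delete corners: if $x$ is a corner of $G$ then $c(G)=c(G-x)$ whenever $c(G-x)\ge 2$, and otherwise $c(G)\le 2$. In particular it suffices to treat graphs with minimum degree $\delta(G)\ge 2$ (a vertex of degree $1$ is always a corner), and more aggressively one can restrict to graphs with no corner at all (``irreducible'' or ``dismantling-free cores''), since every graph on at most $20$ vertices dismantles down to such a core on at most $20$ vertices with the same cop number (as long as the core has at least $2$ vertices, i.e.\ $c\ge 2$; the cop-win case is immediate). This cuts the number of graphs one must actually run the cop algorithm on by orders of magnitude. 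One should also exploit Corollary~\ref{retractcomponents} / Corollary~\ref{maxdegnlarger11} to handle disconnected or low-complexity pieces, though with $\Delta\le 3$ the main lever is corner removal.

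The key steps, in order, are: (1) note that all graphs on at most $14$ vertices already have cop number at most $3$ by Corollary~\ref{maxdegnlarger11} (since $\Delta\le 3 < n-11$ once $n\ge 15$, but for $n\le 14$ this is Corollary~\ref{maxdegnlarger11} directly), so only the range $15\le n\le 20$ requires genuinely new computation; (2) for each such $n$, generate all connected subcubic graphs with $\delta\ge 2$, optionally further restricting to corner-free graphs; (3) run the $k=3$ cop algorithm on each and verify $c(G)\le 3$; (4) tabulate the counts. A complementary sanity check is to cross-reference with the known fact $M_4\ge 16$ from \cite{hosseini_game_2018}, and with the observation that the smallest $4$-regular girth-$5$ graph (Robertson) has $19$ vertices — so in the subcubic range there is no competitor and the computation should come back clean.

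The main obstacle is computational scale: the number of connected subcubic graphs on $20$ vertices is large (millions), and the cop-number decision procedure for $k=3$ is itself expensive (it works on the product configuration space, roughly $n^4$ states with a fixed-point/backward-reachability computation). The mitigation is precisely the corner-removal preprocessing described above — restricting to dismantling-free graphs collapses the candidate set enormously — together with splitting the job into many small independent batches (by $n$, and within each $n$ by \texttt{geng}'s residue-class partitioning) so that the computation is parallelizable and independently checkable, as is done elsewhere in the paper. A secondary concern is correctness of the implementation; this is addressed by the validation against \cite{baird_minimum_2014} and the dismantling-based double-check on small orders already discussed in Section~\ref{smallgraphssections}, plus verifying that the algorithm correctly reports $c\ge 4$ on the Robertson graph.
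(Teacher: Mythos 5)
Your proposal matches the paper's proof: an exhaustive computation over all connected subcubic graphs with $\delta\ge 2$ on $10\le n\le 20$ vertices, with degree-$1$ vertices handled by iterated corner removal (Corollary~\ref{removecorner}) and the residual graphs on at most $9$ vertices dispatched by Theorem~\ref{min3}. One small correction: Corollary~\ref{maxdegnlarger11} requires $\Delta>n-11$, so it does not cover $n=14$ with $\Delta=3$; that case must be included in the computation (as the paper does) or covered by $M_4\ge 16$ from \cite{hosseini_game_2018}, which you do mention as a cross-check.
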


\begin{proof}
	We first prove the statement for $\delta \geq 2$. For $10 \leq n \leq 20$,
	we generate all graphs such that $\delta \geq 2$ and $\Delta\leq 3$, and
	then classify each graph according to its cop number. Both steps use the
	same software and scripts as in Section \ref{smallgraphssections}. We present
	the results in Table \ref{table:deg3}, which shows that no such graph with
	cop number at least 4 exists. We have also saved the precise 3-cop-win
	graphs.

	\begin{table}[h]
		\begin{tabular}{|c|c|c|c|c|c|}
			\cline{3-6}
			\multicolumn{2}{c}{} & \multicolumn{4}{|c|}{Cop number}
			\\
			\hline
			\rowcolor{gray!60}
			$n$                  & $G : \delta\geq 2, \Delta\leq 3$ & $1$  & $2$
			& $3$   & $\geq 4$ \\ \hline
			$10$                 & 458                              & 7    & 450
			& 1     & 0        \\ \hline
			$11$                 & 1353                             & 12   &
			1341      & 0     & 0        \\ \hline
			$12$                 & 4566                             & 21   &
			4543      & 2     & 0        \\ \hline
			$13$                 & 15530                            & 35   &
			15495     & 0     & 0        \\ \hline
			$14$                 & 56973                            & 63   &
			56901     & 9     & 0        \\ \hline
			$15$                 & 214763                           & 114  &
			214642    & 7     & 0        \\ \hline
			$16$                 & 848895                           & 211  &
			848622    & 62    & 0        \\ \hline
			$17$                 & 3454642                          & 388  &
			3454093   & 161   & 0        \\ \hline
			$18$                 & 14542574                         & 735  &
			14540858  & 981   & 0        \\ \hline
			$19$                 & 62871075                         & 1389 &
			62865352  & 4334  & 0        \\ \hline
			$20$                 & 279175376                        & 2664 &
			279147564 & 25148 & 0        \\ \hline
		\end{tabular}
		\caption{Cop number breakdown for connected subcubic graphs.}
		\label{table:deg3}
	\end{table}

	We now considers graphs which contain vertices of degree 1. We know that
	removing a vertex of degree 1 from a graph does not change the cop number
	nor the fact that it is connected (as the vertex of degree 1 is cornered by
	its neighbour, see Corollary \ref{removecorner}). We successively remove
	vertices of degree 1 from the graph. We eventually either get to a graph
	such that $\delta\geq 2$ and $n\geq 10$ (in which case the above results can
	now be applied) or we eventually get to a graph of order at most 9 (in which
	case we apply Theorem \ref{min3}).
\end{proof}

We will use this proposition to prove our main result in Section
\ref{mainsection}. The remainder of this section is not essential and mostly
concerns possible improvements and applications of this proposition.

Notwithstanding the slight improvement of considering $\delta\geq 2$, the
approach here is clearly far from optimal. The algorithm described in the
Section \ref{remainingcasessection} is an example of a possibly better strategy.
However, as we will see, this algorithm would not be the most efficient for
maximum degree 3 : to compute potential 4-cop-win graphs on 19 vertices, one
would still need to compute subcubic 3-cop-win graphs on 15 vertices. A
potentially more interesting algorithm for building possible 4-cop-win subcubic
graphs would consist in building graphs around long shortest paths (see
\cite[Lemma 4]{aigner_game_1984}, which describes how a cop can protect a
shortest path) by adding the desired number of other vertices and considering
all possible ways to add edges. Nonetheless, our exhaustive testing approach is
not without its advantages, as we can use it to gain further knowledge on the
cop number of small graphs.

In fact, Hosseini, Mohar and González Hermosillo de la Maza
\cite{hosseini_meyniels_2021} have recently shown that studying the cop number
of graphs with $\Delta\leq 3$ is of interest for the study of the cop number at
large. More precisely, they show that if it were true that the cop number of
subcubic graphs is in $O(\sqrt{n})$, then the cop number of general graphs is in
$O(n^\alpha)$ for all $\alpha>\frac{3}{4}$. In other words, proving Meyniel's
conjecture for subcubic graphs would substantially improve the best known upper
bound on the cop number. Hence, we consider that getting a distribution of the
cop-number of small subcubic graphs might be interesting, even if it is somewhat
skewed by adding the condition $\delta\geq 2$. Our computations show that not
only there are no 4-cop-win subcubic graphs on at most 20 vertices, but that
subcubic 3-cop-win graphs are overwhelmingly rare for these orders.

The exhaustive search approach also gives us progress on a related problem.
Arguably the best-known result on the game of cops and robbers is Aigner and
Fromme's proof that the cop number of any planar graph is at most 3, see
\cite{aigner_game_1984}. This yields the analogous question of finding the
minimum order of a 3-cop-win planar graph, and an enumeration of such graphs.
The smallest known planar 3-cop-win graph is the dodecahedral graph (see Figure
\ref{fig:dodecahedral_graph}), which has 20 vertices. It is easy to see that
this graph requires 3 cops, as it has girth 5 and is 3-regular. It has been
asked, first in \cite{andreae_pursuit_1986}, as well as in
\cite{bonato_topological_2017}, whether the dodecahedral graph is the unique
smallest 3-cop-win planar graph.

\begin{figure}[h]
	\begin{tikzpicture}[scale=1.4, dot/.style = {circle, fill, minimum size=#1,
					inner sep=0pt, outer sep=0pt}, dot/.default = 5pt
			% size of the circle diameter 
		]

		%   Outside
		\node [dot] (1) at (1.548, 0.503) {}; \node [dot] (2) at (-1.134,
		-0.368) {}; \node [dot] (3) at (0., 1.628) {}; \node [dot] (4) at
		(0.957, -1.317) {}; \node [dot] (5) at (-1.548, 0.503) {}; \node [dot]
		(6) at (-0.957, -1.317) {}; \node [dot] (7) at (0., 2.466) {}; \node
		[dot] (8) at (1.449, -1.995) {}; \node [dot] (9) at (0.302, 0.416) {};
		\node [dot] (10) at (0.489, -0.159) {}; \node [dot] (11) at (-2.345,
		0.762) {}; \node [dot] (12) at (-1.45, -1.995) {}; \node [dot] (13) at
		(-0.489, -0.159) {}; \node [dot] (14) at (0.7, 0.965) {}; \node [dot]
		(15) at (1.133, -0.369) {}; \node [dot] (16) at (2.345, 0.762) {}; \node
		[dot] (17) at (-0.302, 0.416) {}; \node [dot] (18) at (0., -0.514) {};
		\node [dot] (19) at (-0.7, 0.965) {}; \node [dot] (20) at (0., -1.192)
		{};

		\draw (1) to (14); \draw (1) to (15); \draw (1) to (16); \draw (2) to
		(5); \draw (2) to (6); \draw (2) to (13); \draw (3) to (7); \draw (3) to
		(14); \draw (3) to (19); \draw (4) to (8); \draw (4) to (15); \draw (4)
		to (20); \draw (5) to (11); \draw (5) to (19); \draw (6) to (12); \draw
		(6) to (20); \draw (7) to (11); \draw (7) to (16); \draw (8) to (12);
		\draw (8) to (16); \draw (9) to (10); \draw (9) to (14); \draw (9) to
		(17); \draw (10) to (15); \draw (10) to (18); \draw (11)to (12); \draw
		(13) to (17); \draw (13) to (18); \draw (17) to (19); \draw (18) to
		(20);
	\end{tikzpicture}
	\caption[The dodecahedral graph]{The dodecahedral graph\footnotemark}
	\label{fig:dodecahedral_graph}
\end{figure}

There are some partial results for this problem. In \cite{hosseini_game_2018},
Hosseini proves that a 3-cop-win planar graph of minimum order must be
2-connected. Furthermore, Pisantechakool and Tan have shown in
\cite{pisantechakool_conjecture_2017} that any planar graph on 19 or fewer
vertices must contain a winning position for 2 cops, although it has not been
proved that the cops can bring the game to this winning state. Using the
computations in the proof of Proposition \ref{propdegree3}, we are able to get
more evidence supporting the conjecture.

\begin{corollary}
	If $G$ is a connected planar graph such that $\Delta\leq 3$ and $n\leq 20$,
	then $c(G)\leq 2$, unless $G$ is the dodecahedral graph.
\end{corollary}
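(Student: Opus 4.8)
The plan is to piggyback on the exhaustive computation already carried out for Proposition \ref{propdegree3}. Recall that in the proof of that proposition we generated, up to isomorphism, all connected graphs with $\delta\geq 2$ and $\Delta\leq 3$ on $10\leq n\leq 20$ vertices and recorded the cop number of each; in particular we saved the list of all such graphs with $c(G)=3$. So first I would restrict attention to those saved 3-cop-win subcubic graphs: by Table \ref{table:deg3} there are only finitely many of them (a few tens of thousands in total over all relevant orders), and the claim to verify is simply that none of them is planar, except for the dodecahedral graph on $20$ vertices. A planarity test (for instance using the planarity routine in the same graph library, or nauty's \texttt{planarg}) run over this stored list settles exactly this.

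Second, I would handle the reduction to the $\delta\geq 2$ case, exactly as in the proof of Proposition \ref{propdegree3}. If $G$ is connected, planar, subcubic and has a vertex of degree $1$, that vertex is a corner (cornered by its neighbour), so by Corollary \ref{removecorner} removing it changes neither connectedness nor the cop number, and of course the smaller graph is still planar and subcubic. Iterating, we either reach a connected planar subcubic graph on at most $9$ vertices — which has cop number at most $2$ by Theorem \ref{min3} — or we reach one with $\delta\geq 2$ and $10\leq n\leq 20$, which is covered by the computation above. In either case $c(G)\leq 2$ unless at some stage we met the dodecahedral graph; but the dodecahedral graph is $3$-regular, so no vertex of degree $1$ is ever deleted when $G$ itself is the dodecahedral graph, and conversely if $G$ properly contains a degree-$1$ vertex it cannot be the dodecahedral graph. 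Hence the only exception is $G$ itself being the dodecahedral graph.

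Third, for completeness I would note why the dodecahedral graph genuinely is an exception and is $3$-cop-win: it is $3$-regular with girth $5$, so by the Aigner--Fromme bound (girth $\geq 5$ forces cop number at least the minimum degree, see \cite{aigner_game_1984}) its cop number is at least $3$; and it is planar, so by Aigner--Fromme its cop number is at most $3$, giving $c=3$. This part is essentially a remark since the genuinely new content of the corollary is the nonexistence of \emph{other} small planar subcubic $3$-cop-win graphs.

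The only real obstacle is that this is a computer-assisted claim: the proof rests on the correctness of the graph enumeration and cop-number computation already used for Proposition \ref{propdegree3}, together with the correctness of whatever planarity test is applied to the saved $3$-cop-win graphs. No new mathematical difficulty arises — everything reduces to filtering the previously computed list by planarity — so the write-up is short, and the supporting data and code are available at \cite{turcotte_code_2020}.
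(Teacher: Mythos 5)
Your proposal is correct and matches the paper's proof essentially verbatim: the paper likewise filters the stored $3$-cop-win subcubic graphs from Proposition \ref{propdegree3} through a planarity test, finds only the dodecahedral graph, and notes that the degree-$1$/corner reduction handling $\delta=1$ carries over. Your added remarks (the $20$-vertex count ruling out a proper reduction to the dodecahedral graph, and the girth-$5$/Aigner--Fromme justification that it is genuinely $3$-cop-win) are correct elaborations of details the paper leaves implicit.
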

\begin{proof}
	We simply test the 3-cop-win graphs found in the proof of Proposition
	\ref{propdegree3} for planarity
	\cite{wolfram_research_inc_mathematica_nodate}. The only graph which was
	planar was the dodecahedral graph. The arguments relating to the fact that
	we only computed the graphs such that $\delta\geq 2$ still apply here.
\end{proof}

\footnotetext{Computer-generated drawing
	\cite{wolfram_research_inc_mathematica_nodate}.}

%%%%%%%%%%%%%%%%%%%%%%%%%%%%%%%%%%%%%%%%%%%%%%%%%%%%%%%%%%%%%%%%%%%
%%%%%%%%%%%%%%%%%%%%%%%%%%%%%%%%%%%%%%%%%%%%%%%%%%%%%%%%%%%%%%%%%%%
%%%%%%%%%%%%%%%%%%%%%%%%%%%%%%%%%%%%%%%%%%%%%%%%%%%%%%%%%%%%%%%%%%%
%%%%%%%%%%%%%%%%%%%%%%%%%%%%%%%%%%%%%%%%%%%%%%%%%%%%%%%%%%%%%%%%%%%
%%%%%%%%%%%%%%%%%%%%%%%%%%%%%%%%%%%%%%%%%%%%%%%%%%%%%%%%%%%%%%%%%%%
%%%%%%%%%%%%%%%%%%%%%%%%%%%%%%%%%%%%%%%%%%%%%%%%%%%%%%%%%%%%%%%%%%%
%%%%%%%%%%%%%%%%%%%%%%%%%%%%%%%%%%%%%%%%%%%%%%%%%%%%%%%%%%%%%%%%%%%

\section{Remaining cases}\label{remainingcasessection} In this section, we
consider the few remaining cases needed to prove that $M_4=19$, and also work
towards reducing the possible 4-cop-win graphs on 19 vertices. More precisely,
we consider graphs such that $n=17$ with $\Delta=4$, $n=18$ with $\Delta=4,5$
and $n=19$ with $\Delta=4$.

As in Section \ref{highdegreesection}, our main tool will be knowing that if a
graph $G$ is 4-cop-win, then for each vertex $u$, $c(G-N[u]) \geq 3$. We know
there are relatively few such graphs. In the cases of $\Delta=n-11$ or
$\Delta=n-12$, if $u$ is a vertex of maximum degree, we know that these $G-N[u]$
can only be the Petersen graph and cornered Petersen graphs (at least in the
connected case). As these are very few and very similar, we were able to build
structural properties that allowed us to show that $G$ had cop number at most 3.
Having somewhat a large maximum degree, a computational approach would have been
difficult due to the fact that there are too many possible edges to consider to
effectively construct all possible graphs.

For the cases we will now consider, a computational approach is possible as the
maximum degree is not too high. On the other hand, a proof similar to the one of
Section \ref{highdegreesection} for these cases would be difficult, although
certainly not impossible given a large amount of time. Indeed, there are too
many possible choices for $G-N[u]$, and they are not similar enough. As most
graphs found in Lemma \ref{computationn1213} contain the Petersen graph as an
induced subgraph, modifying the strategy to take these vertices into account
could seem reasonable. However, we saw that adding even a single corner to the Petersen
graphs yields significant complications for the proof. Adding multiple vertices
to the Petersen graph would likely be much more complex. Furthermore, some of
the graphs found in Lemma \ref{computationn1213} do not contain the Petersen
graph as an induced subgraph at all, and would need to be considered separately.
For these reasons, we have mostly investigated the computational approach.

Our goal is to build graphs which are possibly 4-cop-win: graphs $G$ for which
we cannot say that $c(G)\leq 3$ simply by looking at $G-N[u]$ for the vertices
$u$ of maximum degree, which we will call \emph{candidate 4-cop-win graphs}.

The simplest idea, which we have briefly discussed in Section
\ref{smallgraphssections}, would be simply to consider a 3-cop-win graph $G'$ on
12 or 13 vertices, add a vertex $u$ of chosen maximum degree and its
neighbourhood, and then look at every possible ways of joining $N(u)$ to $G'$ by
respecting the maximum degree condition. Then we could compute the cop number of
each of these graphs. Even by reducing the number of cases by isomorphism, the
number of graphs to consider is massive, especially in the case $\Delta=5$, so
we must be a tad smarter. We present the Merging Algorithm as a way to generate
candidate 4-cop-win graphs, which we then test using a standard cop-number
algorithm.

We briefly introduce some notation. In general, when considering a graph $G$ and
a vertex $u$, the degree of $u$ will always refer to the degree of $G$ in $u$.
If we want to discuss the degree of $u$ in some induced subgraph $H$, we will
refer to it as the $H$-degree of $u$. In general, if we say "there exists a
vertex of $H$-degree $r$", then we are also implicitly stating that this vertex
is in $H$. Given some function $\phi$ defined on some set containing $T$, we
will use the notation $\phi(T)=\{\phi(t):t\in T\}$.

\subsection{Presentation of the Merging Algorithm}

\subsubsection{Quick Overview}

Our approach to build candidate 4-cop-win graphs $G$ will be the following. Let
$v_1$ and $v_2$ be non-adjacent vertices, which we will in general choose to be
a pair with the highest possible degrees. Denoting $G_1=G-N[v_2]$ and
$G_2=G-N[v_1]$, we know that if $G$ is to be a candidate 4-cop-win graph, the
cop number of $G_1$ and $G_2$ must have cop number at least 3 (we will see later
that we can consider $G_1$ and $G_2$ to be connected).

In Section \ref{smallgraphssections}, we computed the 3-cop-win graphs on
specific numbers of vertices and with some maximum degree conditions. As we have
a few different cases to consider, the possible choices of $G_1$ and $G_2$ will
vary. For now, we can simply denote by $\mathcal L_1$ and $\mathcal L_2$ some
sets of 3-cop-win graphs. Our goal is to determine every possible graph $G$,
with maximum degree $\Delta$, for which $G_1\in \mathcal L_1$ and $G_2\in
\mathcal L_2$. We will call the process the Merging Algorithm, which we will now
detail.

\subsubsection{Input of the Algorithm}\label{algoinput}

Integers $n,D_1,D_2$ and sets of (isomorphism classes of) graphs $\mathcal L_1$
and $\mathcal L_2$, such that
\begin{enumerate}
	\item the graphs in $\mathcal L_1$ and $\mathcal L_2$ are connected,
	      3-cop-win and have maximum degree at most $\Delta$, and
	\item the graphs in $\mathcal L_1$ have $n-D_2-1$ vertices and the graphs in
	      $\mathcal L_2$ have $n-D_1-1$ vertices.
\end{enumerate}

\subsubsection{Output of the Algorithm}\label{algooutput}

The algorithm returns all connected graphs $G$ on $n$ vertices and maximum
degree $\Delta=D_2$ which contain a pair of non-adjacent (and distinct) vertices
$v_1$ and $v_2$, with the following 4 properties. Denote $G_1=G-N[v_2]$ and
$G_2=G-N[v_1]$. Then,
\begin{enumerate}
	\item $v_1$ and $v_2$ have degree respectively $D_1$ and $D_2$,
	\item $G_1\in \mathcal L_1$ and $G_2\in \mathcal L_2$,
	\item for all other vertices $u$ of maximum degree ($d(u)=\Delta$),
	$G-N[u]\in \mathcal L_1$, and
	\item if $D_1<D_2$, then the set of vertices of $G$ of maximum degree forms
	      a clique and $v_1$ and $v_2$ have at least 1 common neighbour.
\end{enumerate}

Isomorphic graphs may be omitted from the results, as we are not interested in
the precise labellings of the graphs.

\subsubsection{Required definitions}
We define a few concepts which will be useful in the description of the
algorithm.

A \emph{partially-constructed graph} is a triple $(\widehat G,\mathcal F,R)$
where $\widehat G$ is a graph, $\mathcal F$ is a partition of $V(\widehat G)$,
and $R\subseteq V(\widehat G)$. The graph $\widehat G$ is a graph to which we
might still add edges in later steps of the algorithm. The set $\mathcal F$ will
be used to contain some information on how $\widehat G$ was constructed. We will
use $R$ to denote some set of vertices which must have degree strictly smaller
than $D_2=\Delta$, which will be useful to reduce the number of cases we need to
verify.

We say $\phi$ is a \emph{strong isomorphism} between $(\widehat G,\mathcal F,R)$
and $(\widehat G',\mathcal F,R)$ (we suppose both graphs have the same extra
structure given to them, in particular $V(\widehat G)=V(\widehat G')$) if $\phi$
is an isomorphism between $\widehat G$ and $\widehat G'$ such that $\phi(S)=S$
for each $S\in \mathcal F$ and such that $\phi(R)=R$, i.e. $\phi$ is an
isomorphism that preserves the extra structure we attach to our graphs.

Let $G_1\in \mathcal L_1$. We define an equivalence relation on $V(G_1)$ as
follows: $u\sim_{G_1}v$ if there is some automorphism $\psi$ of $G_1$ such that
$\psi(u)=v$. Then, we can define a total ordering $\leq_{G_1}$ on
$V(G_1)/\sim_{G_1}$ by comparing the vertices themselves as
follows: if $u,v$ are two distinct (equivalence classes of) vertices, we set 
$u<_{G_1} v$ if $d_{G_1}(u)< d_{G_1}(v)$, and if $u,v$ have the same degree, 
then we choose the order arbitrarily.

\subsubsection{Phase 1 of the Algorithm}\label{subsec:phase_1_merging}

First choose some $G_1$ and $G_2$ from $\mathcal L_1$ and $\mathcal L_2$
respectively. The rest of the algorithm will be repeated for each possible
choice of $G_1$ and $G_2$. Also choose strictly positive integers $d_1$ and
$d_2$ such that $D_2-d_2=D_1-d_1$, $d_1\leq D_1$ and $d_2\leq D_2$. Again, every
possible choice will be considered. When $D_1<D_2$, it will suffice to
pick $d_1$ such that $d_1<D_1$ (and thus $d_2<D_2$).

Then consider every possible choice of $v_1\in V(G_1)$ and $v_2\in V(G_2)$ such
that $v_1$ has $G_1$-degree $d_1$ and $v_2$ has $G_2$-degree $d_2$ (of course,
$v_1$ and $v_2$ can be considered up to automorphism in $G_1$ and in $G_2$). For
each choice of vertices, consider every possible way of identifying $G_1-N[v_1]$
and $G_2-N[v_2]$ by computing every isomorphism between these graphs. If there
are none, this branch of the algorithm simply doesn't yield a graph. For each
identification, the graphs may be merged by union, keeping the closed
neighbourhoods of $v_1$ and $v_2$ distinct.

In the case $D_1=D_2$, let $R=\{u\in V(G_1) : u>_{G_1}v_1\}$ be the set of
vertices of $G_1$ strictly greater than $v_1$ in the ordering defined above. In
the second main case of the algorithm, when $D_1<D_2$, set $R=V(G_1)$. Note that as $R$ 
is a subset of the vertices of $V(G_1)$, it is also a subset of the vertices in the graph 
resulting from the identification above. If the
merging process above has created vertices of degree greater than $D_2=\Delta$,
the graph is thrown out: as the rest of the algorithm can only raise the degree
again, it would yield graphs we do not want to consider. Similarly, throw out the
graphs in which any vertex in $R$ has maximum degree. Finally, add
$D_2-d_2=D_1-d_1$ common neighbours to $v_1$ and $v_2$ (these are new vertices).
Now $v_1$ has degree $D_1$ and $v_2$ has degree $D_2$. Note that in the case
$D_1<D_2$, we limited $d_1$ to $d_1<D_1$, so $v_1$ and $v_2$ have at least 1
common neighbour.

Denote by $\widehat G$ some graph resulting from these operations. It is easily
seen in Figure \ref{fig:mergingpart1} that the construction implicitly
partitions the vertices into 6 sets (or "types" of vertices) : $$\mathcal
F=\{\{v_1\},N(v_1)\setminus N(v_2),(N[v_1]\cup N[v_2])^c,N(v_1)\cap
N(v_2),N(v_2)\setminus N(v_1),\{v_2\}\}$$ where the complement is taken relative to
$V(\widehat G)$. We call the partially-constructed graph $(\widehat
G,\mathcal F,R)$ a \emph{base graph}. We will sometimes call $\widehat G$ itself
a base graph, as the remaining structure is usually clear.

Suppose $\widehat G$ and $\widehat G'$ are generated in Phase 1 of the algorithm
with the same properties, that is, they are produced from same algorithm inputs and
the same choices of $G_1,G_2,d_1,d_2,v_1,v_2$ but by choosing a different identification 
in the merging step. In this case, it is clear that $\mathcal F,R$ are identical for both.
If $(\widehat G,\mathcal F,R)$ and $(\widehat G',\mathcal F,R)$ are strongly
isomorphic, we can consider these graphs to be duplicates: only one needs to be
sent to Phase 2 of the algorithm. We will see later that, up to isomorphism,
these strongly isomorphic base graphs necessarily result in isomorphic candidate
4-cop-win graphs.

The result of Phase 1 should then be some (usually large) collection of base
graphs, as we have to consider every choice of $G_1,G_2,d_1,d_2,v_1,v_2$ and
identification of $G_1-N[v_1]$ and $G_2-N[v_2]$.

\begin{figure}[h]
	\begin{tikzpicture}[scale=1.75, dot/.style = {circle, fill, minimum size=#1,
					inner sep=0pt, outer sep=0pt}, dot/.default = 5pt
			% size of the circle diameter 
		]

		%   Outside

		\draw (-4,1.5) ellipse (1.2 and 0.75);

		\draw (0,1.5) ellipse (1.5 and 0.75);

		\draw (-3.5,5) ellipse (0.75 and 1.5);

		\draw (0,5) ellipse (2 and 2);

		\draw[rounded corners=35pt, fill=blue,fill opacity=0.15] (-2.4, -0.5)
		rectangle (2.4, 7.4) {}; \draw[rounded corners=35pt, fill=red,fill
		opacity=0.15] (-5.6, 2.6) rectangle (2.4, 7.4) {};

		\node [dot, label=below:$v_2$] (1) at (0, 0) {}; \node [dot,
		label=left:$v_1$] (2) at (-5, 5) {};

		\node [dot] (3) at (-3.5, 1.5) {}; \node [dot] (4) at (-4.5, 1.5) {};

		\node [dot] (5) at (-0.75, 1.5) {}; \node [dot] (6) at (0, 1.5) {};
		\node [dot] (7) at (0.75, 1.5) {};

		\node [dot] (8) at (-3.5, 5) {}; \node [dot] (9) at (-3.5, 5.75) {};
		\node [dot] (10) at (-3.5, 4.25) {};

		\node [dot] (11) at (-1, 6.4) {}; \node [dot] (12) at (1, 6.25) {};
		\node [dot] (13) at (0, 5.8) {}; \node [dot] (14) at (1.5, 5.2) {};
		\node [dot] (15) at (1.3, 3.9) {}; \node [dot] (16) at (0.1, 3.5) {};
		\node [dot] (17) at (-1.3, 3.9) {}; \node [dot] (18) at (-0.9, 4.7) {};

		\draw (1) to (3); \draw (1) to (4);

		\draw (2) to (3); \draw (2) to (4);

		\draw (1) to (5); \draw (1) to (6); \draw (1) to (7);

		\draw (2) to (8); \draw (2) to (9); \draw (2) to (10);

		\draw (5) to (6); \draw (8) to (10);

		\draw (5) to (17); \draw (5) to (13); \draw (6) to (18); \draw (6) to
		(14); \draw (7) to (16); \draw (7) to (15); \draw (9) to (11); \draw (8)
		to (14); \draw (8) to (18); \draw (10) to (17); \draw (10) to (13);

		\draw (11) to (12); \draw (11) to (13); \draw (11) to (18);

		\draw (12) to (14); \draw (12) to (17);

		\draw (14) to (15);

		\draw (15) to (13); \draw (15) to (16);

		\draw (16) to (18); \draw (16) to (17);

		\node at (0,-0.75) {$G_2$}; \node at (-5.86,5) {$G_1$};

		\node at (-3.5,6.75) {$N(v_1)\setminus N(v_2)$};

		\node at (1.5,0.55) {$N(v_2)\setminus N(v_1)$};

		\node at (-4,0.5) {$N(v_1)\cap N(v_2)$};

		\node at (0,7.18) {$(N[v_1]\cup N[v_2])^c$};
	\end{tikzpicture}
	\caption{Example from Phase 1 of the Merging Algorithm. Here, the base graph was generated using parameters $n=18$, $D_1=D_2=\Delta=5$ and $d_1=d_2=3$.}
	\label{fig:mergingpart1}
\end{figure}

\subsubsection{Phase 2 of the Merging Algorithm}\label{subsec:phase_2_merging}

The goal of this phase is to complete the construction of the 4-cop-win
candidates graphs by adding edges to the base graphs. We will run Phase 2 for
every base graph from Phase 1. Suppose we have some base graph $(\widehat
G,\mathcal F,R)$. We know that we do not want to add any edge such that both
ends are in $G_1$ or both ends in $G_2$, as we want these to be induced subgraphs
of $G$. Furthermore, we have already created all incident edges to either $v_1$
or $v_2$, by giving them the desired number of neighbours. Thus, we only need to
consider adding edges which are either

\begin{enumerate}
	\item between $N(v_1)\cap N(v_2)$ and $\{v_1,v_2\}^c$, including edges with
	      both ends in $N(v_1)\cap N(v_2)$, or
	\item between $N(v_2)\setminus N(v_1)$ and $N(v_1)\setminus N(v_2)$.
\end{enumerate}

We fix an order on the vertices of $N(v_2)$ as follows : first the vertices in
$N(v_1)\cap N(v_2)$, then those in $N(v_2)\setminus N(v_1)$. Denote $a_i$ to be
$i$-th vertex in this order, which will be considered at the $i$-th step of
Phase 2.

At the $i$-th step of Phase 2, a new partially-constructed graphs is created for
each way of adding edges incident to $a_i$ of the form described above, with the 
following restrictions:
\begin{enumerate}
	\item not to add an edge if one of its end vertices already has degree
	$\Delta$,
	\item not to add an edge if one of its end vertices is in $R$ and adding
	this edge would bring that vertex to degree $\Delta$, and
	\item not to add an edge between $a_ia_j$ if $i>j$.
\end{enumerate}
Then, for each of these newly created graphs, repeat the process by moving on to
$a_{i+1}$, and so on. Hence restriction (3) comes from the fact that such an
edge $a_ia_j$ has already been considered at the $j$-th step.

In this phase, we will also reduce some cases by strong isomorphism. Let
	$A_i=\{a_1,\dots,a_i\}$. If $a_i\in N(v_1)\cap N(v_2)$, let $$\mathcal
	T_i=\{N(v_1)\cap N(v_2)\cap A_i,(N(v_1)\cap N(v_2))\setminus
	A_i,N(v_2)\setminus N(v_1)\}.$$ Otherwise, if $a_i\in N(v_2)\setminus
	N(v_1)$, then let $$\mathcal T_i=\{N(v_1)\cap N(v_2),(N(v_2)\setminus
	N(v_1))\cap A_i,(N(v_2)\setminus N(v_1))\setminus A_i\}.$$Finally, define
	$$\mathcal F_i=\{\{v_1\},N(v_1)\setminus N(v_2),(N[v_1]\cup
	N[v_2])^c,\{v_2\}\}\cup \mathcal T_i.$$ In other words, we refine $\mathcal
	F$ by distinguishing the vertices for which we have already considered
	adding extra edges.

Hence, we will attach to the graphs produced in $i$-th step of Phase 2 this
extra structure, meaning the $i$-th step produces partially-constructed graphs
of the form $(\widehat G', \mathcal F_i, R)$. Note that $\mathcal F_i$ only
depends on the base graph and at the step number in the edge adding procedure,
and not the choices of edges added in steps $1,\dots,i$. Hence, we may compare
partially-constructed graphs and remove strongly isomorphic repetitions (we
remind the reader that this is only for those constructed from the same base graph).
As this procedure is often lengthy, we only remove graphs by
strong-isomorphism in some cases.

In the case $D_1<D_2$, if at any point the (partially-constructed) graph
contains non-adjacent vertices both of degree $\Delta$, we can throw out this
graph, since these vertices will necessarily remain non-adjacent in final
graphs.

After considering every possible way of adding edges (after the $|N(v_2)|$-th
step), all graphs $G$ such that there is some vertex $u$ of maximum degree where
$G-N[u]$ is not in $\mathcal L_1$ are thrown out (by construction, we do not need
to verify this for $v_1$ and $v_2$). From the remaining graphs, we also remove
isomorphic graphs. Note that since we have split up the computations in many
pieces, we only compare for isomorphism graphs which were generated from the
same base graph.

\subsubsection{Comments on the Merging Algorithm}
We note that in practice, in order to verify strong isomorphism, we do not need to 
test whether $\phi(R)=R$ holds. Indeed, as $\phi$ preserves in particular
$\{v_1\}$, $N(v_1)\setminus N(v_2)$ and $V(G)\setminus(N[v_1]\cup N[v_2])$, we
know that $\phi$ restricted to the vertices of $G_1$ is an automorphism of
$G_1$. Recall that $R$ was constructed using the ordering $\leq_{G_1}$ on
$V(G_1)/\sim_{G_1}$ (when $D_1=D_2$) or was chosen to be $V(G_1)$ (when
$D_1<D_2$). Hence $R$ is always closed under under automorphism of $G_1$, so in
particular $\phi(R)=R$.

In Phase 1, one particular consequences of having a set $R$ is the following.
Suppose $D_1=D_2$ and $G_1$ already (before merging) contains multiple vertices
of degree $D_1=D_2=\Delta$. Let $u$ be some vertex maximal relative to
$\leq_{G_1}$ (it is unique up to automorphisms). Then, in Phase 1 when $v_1$ is 
not $u$, no graph will be generated. Indeed, in this case $u \in R$
but $u$ necessarily has degree $\Delta$ after merging, a case we have excluded.
For this reason, when $G_1$ contains a vertex of degree $\Delta$, we do not need
to try multiple choices of $v_1$. In particular, when $\Delta(G_1)=\Delta$, we
can only consider the case $d_1=D_1=\Delta$.

\subsubsection{Validity of the Merging Algorithm}

Considering that the algorithm itself is relatively straightforward, we do not
present a complete proof of the validity of the algorithm. We however present a
few key points.

Consider a graph $G$ respecting the conditions described in the Section
\ref{algooutput}. Our goal is to show that $G$, or a graph isomorphic to $G$, is
constructed by the Merging Algorithm. Choose $v_2$ to be any vertex of degree
$D_2$ in $G$ such that $V(G)\setminus N[v_2]$ contains at least one vertex
respecting the conditions for $v_1$ in Section \ref{algooutput}. Denote by $S$
this non-empty set of possible choices for $v_1$. In the case $D_1<D_2$, we
choose $v_1$ to be any vertex of $S$. If $D_1=D_2$, then choose $v_1$ to be
maximal in $S$ relative to the ordering $\leq_{G_1}$. Then, write
$G_1=G-N[v_2]$, $G_2=G-N[v_1]$, $D_1=d_G(v_1)$, $D_2=d_G(v_2)$,
$d_1=d_{G_1}(v_1)$ and $d_2=d_{G_2}(v_2)$. It is easy to verify that
\begin{align*}
	d_2-d_1 & =d_{G_2}(v_2)-d_{G_1}(v_1)                                                            \\
	        & =|N_{G-N[v_1]}(v_2)|-|N_{G-N[v_2]}(v_1)|                                              \\
	        & =|N(v_2)\setminus (N[v_1]\cap N(v_2))|-|N(v_1)\setminus (N(v_1)\cap N[v_2])|          \\
	        & =|N(v_2)\setminus (N(v_1)\cap N(v_2))|-|N(v_1)\setminus (N(v_1)\cap N(v_2))|          \\
	        & =\left(|N(v_2)|-|N(v_1)\cap N(v_2)|\right)-\left(|N(v_1)|- |N(v_1)\cap N(v_2)|\right) \\
	        & =|N(v_2)|-|N(v_1)|                                                                    \\
	        & =D_2-D_1.
\end{align*}

Thus, the pair of degrees $d_1$ and $d_2$ is indeed considered in the Merging
Algorithm. It is then easy to see that the Phase 1 of the algorithm will have
some branch considering this exact choice of $G_1,G_2,d_1,d_2,v_1,v_2$ and the
actual isomorphism between $G_1-N[v_1]$ and $G_2-N[v_2]$. Note that in the case
$D_1=D_2$, choosing $v_1$ as maximal in $S$ relative to $\leq_{G_1}$ guarantees
that all vertices of $G-N[v_2]$ which are strictly greater to $v_1$ in this
order do not have maximal degree in $G$, which is consistent with how we use $R$
in the algorithm.

Then, as in Phase 2 of the algorithm we consider adding every possible edge not
totally contained in $G_1$ or $G_2$ (precisely the edges between $N(v_1)\cap
N(v_2)$ and $\{v_1,v_2\}^c$, or between $N(v_2)\setminus N(v_1)$ and
$N(v_1)\setminus N(v_2)$) while still respecting some degree conditions, $G$
must have been constructed.

Other steps in the algorithm, such as requiring that $G-N[u]\in \mathcal L_1$
for $u$ of maximum degree, or in the case $D_1<D_2$ removing graphs with
non-adjacent vertices of maximum degree, are consistent with Section
\ref{algooutput}.

In fact, what we have stated so far is only true up to the removal of
partially-constructed graphs using strong isomorphism. Indeed, although we claim
a graph isomorphic to $G$ has necessarily been constructed, the graph $G$ might
not have been constructed itself. To see this, suppose that $(\widehat
G,\mathcal F,R)$ was thrown out at some step because it was strongly isomorphic
to $(\widehat G',\mathcal F,R)$ (where the strong isomorphism is $\phi:
V(\widehat G)\rightarrow V(\widehat G')$). We wish to prove that every graph
that would be constructed from $(\widehat G,\mathcal F,R)$ (had it not been
pruned) can be constructed from $(\widehat G',\mathcal F,R)$, up to
isomorphism.

Suppose that the edges we want to add are $E_1=\{w_1z_1,w_2z_2,\dots,w_kz_k\}$,
which we can note as $G=\widehat G+E_1$. Then, it is simple to see that $\phi$
is also an isomorphism between $G=\widehat G+E_1$ and $G'=\widehat G'+E_2$,
where $E_2=\{\phi(w_1)\phi(z_1),\phi(w_2)\phi(z_2),\dots,\phi(w_k)\phi(z_k)\}$.
It remains to see that Phase 2 algorithm allows us to add the edges in $E_2$ to
$\widehat G'$, that is that $\widehat G'+E_2$ will actually be constructed. For
now, suppose that none of the partially-constructed graph in the intermediate
steps between $(\widehat G',\mathcal F,R)$ and $G'$ are throw out because of
some other strong isomorphism.

As $\phi$ is a strong isomorphism, the sets $\{v_1\},N(v_1)\setminus
	N(v_2),(N[v_1]\cup N[v_2])^c,N(v_1)\cap N(v_2),N(v_2)\setminus
	N(v_1),\{v_2\}$ are preserved by $\phi$ (the permutations between vertices
	are inside these sets). Hence, if $w_iz_i$ is between $N(v_1)\cap N(v_2)$
	and $\{v_1,v_2\}^c$, then $\phi(w_i)\phi(z_i)$ is between $N(v_1)\cap
	N(v_2)$ and $\{v_1,v_2\}^c$ as well. The same goes for the other type of
	edge we add.  In Phase 2, we also required that $\phi$ preserves the
	vertices in $N(v_1)\cap N(v_2)$ and in $N(v_2)\setminus N(v_1)$ to which we
	have already added (possibly zero) edges (recall the refinement $\mathcal
	F_i$ or $\mathcal F$). Hence, if some vertex $a\in N(v_2)$ has not yet been
	considered in Phase 2 by the time we throw out $(\widehat G,\mathcal F,R)$,
	neither has $\phi(a)$ in $(\widehat G,\mathcal F,R)$. This excludes the
	possibility that because of the strong isomorphism some vertex which had to
	eventually be considered never is. When adding edges, some degree conditions
	also had to be respected. In particular, there is a fixed maximum degree,
	and the vertices in $R$ must have degree strictly smaller than the maximum
	degree. Let $x$ be a vertex in $V(\widehat G)$. As $\phi$ is an isomorphism,
	the degrees of $x$ and $\phi(x)$ are identical. As $\phi$ is a strong
	isomorphism, $x\in R$ if and only if $\phi(x)\in R=\phi(R)$. Hence, the
	maximum number of edges that we can add to a vertex $x$ is the same as
	$\phi(x)$. Therefore, as the edge $w_iz_i$ was authorized to be added to $\widehat
	G$, there will be no degree restrictions forbidding the edge
	$\phi(w_i)\phi(z_i)$ to be added to $\widehat G'$.

This shows that for every $G$ that would be constructed from $(\widehat
G,\mathcal F,R)$, there is some $G'$ constructed from $(\widehat G',\mathcal
F,R)$ which is isomorphic to $G$, proving that throwing away $(\widehat
G,\mathcal F,R)$ is valid. However, this supposed that during the
steps between $(\widehat G',\mathcal F,R)$ and $G'$, none of the intermediate
partially-constructed graphs were thrown out by some other strong
isomorphism. We may now suppose the contrary. Consider that at some point,
the intermediary graph $(\widehat G'+E_2',\mathcal F,R)$ (in particular $E_2'\subset
E_2$) was thrown out in favour of some other graph $(\widehat G'',\mathcal F,R)$. While
neither $G$ nor $G'$ was constructed, we wish to show there is some $G''=G'+E_3$
constructed from $(\widehat G'',\mathcal F,R)$ which is isomorphic to $G'$ (and
to $G$). We can apply the same argument as above to show that one such $G''$
exists, assuming that between $\widehat G''$ and $G''$ no intermediate graph is
removed by strong isomorphism. If it is, then by repeating this argument (at
most $|N(v_2)|$ times, the number of iterations), we
will eventually find some graph constructed that was isomorphic to $G$.

We note that even if $(\widehat G,\mathcal F,R)$ and $(\widehat G',\mathcal
F,R)$ are strongly isomorphic graphs, the partially-constructed graphs they lead to one
step later are not necessarily strongly isomorphic, in particular as the
ordering of the vertices $a_i$ is not preserved by $\phi$ (not completely, although the
information on whether $a_i$ has been considered or not by Phase 2 yet is
preserved). Hence, the proof above requires finding a correspondance between the
final graphs they produce, and not tracking the constructions step by step.

\subsection{Results}

We will now use the Merging Algorithm to build all possible 4-cop-win graphs.
Our implementation of the algorithm is done in the Wolfram language
\cite{wolfram_research_inc_mathematica_nodate}.

To lighten the proof, we will use the following notation. Define a property $P$
with the usual definition: a property $P$ is a function from a set to a Boolean
value. For instance, if $C_3$ is the graph property of being 3-cop-win, then
$C_3(\peter_0)$ is whether the Petersen graph is 3-cop-win (which is true).

\begin{proposition}\label{propremainingcases} Let $G$ be a connected graph such
	that either
	\begin{enumerate}
		\item $n=17$ and $\Delta=4$,
		\item $n=18$ and $\Delta\in \{4,5\}$, or
		\item $n=19$ and $\Delta=4$.
	\end{enumerate}
	If every proper induced connected subgraph $H$ of $G$ respects $c(H)\leq 3$,
	then $c(G)\leq 3$, unless $G$ is the Robertson graph (see Figure
	\ref{fig:petersenrobertson_graph}).
\end{proposition}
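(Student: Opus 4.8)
The plan is to argue by contradiction: assume $c(G)\geq 4$ and $G$ is not the Robertson graph, use the hypothesis to pin down two ``escape'' subgraphs $G-N[v_1],G-N[v_2]$ inside the small, already-classified families of Section~\ref{smallgraphssections}, feed these to the Merging Algorithm to obtain a finite list of possibilities for $G$, and then check computationally that none of them except the Robertson graph has cop number $\geq 4$.

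First I would record the structural consequences of the contradiction hypothesis. By Corollary~\ref{removecorner}, if $x$ is a corner of $G$ then $c(G-x)=c(G)\geq 4$ while $G-x$ is a proper connected induced subgraph, contradicting the hypothesis; so $G$ is corner-free, in particular $\delta(G)\geq 2$. For each $u\in V(G)$, Corollary~\ref{retractcomponents} with $K=V(G)\setminus N[u]$ forces some connected component $C$ of $G-N[u]$ to satisfy $c(C)\geq 3$; being a proper connected induced subgraph, $c(C)=3$, hence $|V(C)|\geq 10$ by Theorem~\ref{min3}. Since $|V(G-N[u])|=n-d(u)-1\leq n-3\leq 16<20$, $C$ is the only component of cop number $\geq 3$, so the union $K'$ of the remaining components has $c(K')\leq 2$; if $K'\neq\emptyset$ then Corollary~\ref{retractcomponents} gives $c(G-K')\geq 4$ even though $G-K'$ is connected (an edge joins $C$ to $N(u)$ because $G$ is connected) and is a proper induced subgraph, a contradiction. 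Thus for every $u$, $G-N[u]$ is connected with $c(G-N[u])=3$; consequently $d(u)\leq n-11$, $\Delta(G-N[u])\leq\Delta$, and $G-N[u]$ is not a disjoint union of paths and cycles, hence contains a vertex of degree $\geq 3$.

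Next I would set up the Merging Algorithm. As $\Delta\leq 5<n-1$, choose a non-adjacent pair $v_1,v_2$ with $(d(v_2),d(v_1))$ lexicographically maximal; then $d(v_2)=\Delta=:D_2$ and $D_1:=d(v_1)\leq D_2$. Applying the previous paragraph to $v_2$ yields a vertex of degree $\geq 3$ in $G-N[v_2]$, which is a non-neighbour of $v_2$, so $D_1\geq 3$; and if $D_1<D_2$ then the maximum-degree vertices of $G$ form a clique (two non-adjacent ones would give a lexicographically larger pair). Put $G_1=G-N[v_2]$ and $G_2=G-N[v_1]$: these are connected $3$-cop-win graphs on $n-D_2-1$ and $n-D_1-1$ vertices with maximum degree $\leq\Delta$. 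One then checks that $n-D_i-1\in\{10,\dots,14\}$ with the matching degree bound in all cases of the proposition: for $n\leq 18$ this follows from $D_1\geq 3$, and for $n=19,\Delta=4$ it needs the extra fact that $G$ has two non-adjacent degree-$4$ vertices (so $D_1=D_2=4$), which I would establish separately by ruling out the clique case — there some degree-$4$ vertex $v_2$ would have $G-N[v_2]$ a connected subcubic $14$-vertex $3$-cop-win graph, of which there are very few (Table~\ref{table:deg3}), and a short argument or a small auxiliary merging run finishes it. Hence $G_1\in\mathcal L_1$ and $G_2\in\mathcal L_2$ for the explicit finite lists $\mathcal L_1,\mathcal L_2$ furnished by Theorem~\ref{min3}, Proposition~\ref{classification11} and Lemma~\ref{computationn1213}; by the validity of the Merging Algorithm, $G$ is isomorphic to one of the finitely many graphs output when the algorithm is run on $(n,D_1,D_2,\mathcal L_1,\mathcal L_2)$ over every admissible pair $(D_1,D_2)$, including the $D_1<D_2$ branch (used for $n=18,\Delta=5$, with the clique and common-neighbour conditions on the max-degree vertices).

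Finally, I would run the cop-number procedure of Section~\ref{smallgraphssections} (for $k=1,2,3$) on every graph produced by these runs: the claim is that all of them satisfy $c\leq 3$, the single exception being the Robertson graph, which appears in the $n=19$, $D_1=D_2=4$ run (for each vertex $v$ of the Robertson graph, $G-N[v]$ is a $3$-cop-win $14$-vertex graph, so it lies in the relevant list). This contradicts $c(G)\geq 4$ with $G$ not the Robertson graph, completing the proof. The main difficulty is not a single deep step but the bookkeeping: guaranteeing that each residual graph $G-N[v_i]$ has at most $14$ vertices and the correct maximum-degree bound (the $n=19,\Delta=4$ non-adjacent-degree-$4$-pair claim and the $D_1<D_2$ clique/common-neighbour branch are the delicate points), splitting the Merging Algorithm's sizeable output into independently verifiable chunks, and certifying the final cop-number computations — all of which is carried out computationally.
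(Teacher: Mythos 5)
Your overall architecture is the same as the paper's: reduce to corner-free graphs in which $G-N[u]$ is connected and $3$-cop-win for every $u$, feed the resulting pairs $(G-N[v_2],G-N[v_1])$ to the Merging Algorithm over the lists of Lemma \ref{computationn1213}, and finish by computing cop numbers. The reduction step and the fact that $\Delta=4$ forces two non-adjacent vertices of maximum degree are handled essentially as in the paper (the paper's argument for the latter is uniform over $n=17,18,19$ and rests on the computed fact that the $3$-cop-win graphs on $12$--$14$ vertices with $\Delta\le 3$ are $3$-regular).

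The genuine gap is in the case $n=18$, $\Delta=5$ when the maximum-degree vertices form a clique, i.e.\ exactly when you must invoke the $D_1<D_2$ branch. For $G$ to appear in that branch's output you need a non-adjacent pair $v_1,v_2$ with $d(v_2)=5$, $d(v_1)\in\{3,4\}$, such that (i) $v_1$ and $v_2$ have a common neighbour --- Phase 1 with $D_1<D_2$ only ever constructs graphs with $d_1<D_1$, so a pair at distance $\ge 3$ is simply never built --- and (ii) $\Delta(G-N[v_1])\le 4$, since the $13$- and $14$-vertex lists $\mathcal L_2$ were only computed under the bound $\Delta\le 4$. Your lexicographically maximal choice of $(v_1,v_2)$ guarantees $d(v_1)\ge 3$ and the clique property, but neither (i) nor (ii): nothing prevents every non-neighbour of $v_2$ of degree $3$ or $4$ from lying at distance $\ge 3$ from $v_2$, or from having a degree-$5$ vertex survive with all five neighbours in $G-N[v_1]$. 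Establishing that a suitable $v_1$ always exists is precisely the paper's property $P_2$ and its proof that every admissible $G$ satisfies $P_1$ or $P_2$; this is a several-paragraph counting argument using corner-freeness (each vertex of $N(v_2)$ needs a neighbour outside $N[v_2]$), the fact that a $3$-cop-win $G-N[v_2]$ has at most two vertices of degree $1$, and a case split on the number of degree-$5$ vertices in $N(v_2)$. Without that argument the $\Delta=5$ case of the proposition is not proved; the phrase ``with the clique and common-neighbour conditions on the max-degree vertices'' asserts the needed conclusion rather than deriving it.
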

\begin{proof}
	Let $u$ be any vertex of $G$. We know that $G-N[u]$ has at most $n-2 \leq
	17$ vertices. If $G-N[u]$ is disconnected, it must contain at least one
	component $K$ which has at most 8 vertices. Then, Theorem \ref{min3} implies
	that $c(K)\leq 2$. Furthermore, our hypothesis implies that $c(G-K)\leq 3$,
	as $G-K$ is necessarily a connected induced subgraph of $G$. By Corollary
	\ref{retractcomponents}, we get that $c(G)\leq \max \{c(G-K),c(K)+1\} \leq
	3$. Thus, for the remainder of the proof, we assume that for every vertex
	$u$, $G-N[u]$ is connected. Likewise, we can assume that $G$ does not
	contain a corner $x$. Indeed, $G-x$ is necessarily connected and has cop
	number at most 3, therefore Corollary \ref{removecorner} implies that $G$
	also has cop number at most 3. We may also assume that $c(G-N[u])=3$: if
	$c(G-N[u]) \leq 2$, placing a stationary cop on $u$ implies that $c(G)\leq
	3$.

	We can bring together these assumptions by defining the property $M(G)$ as
	follows : $G$ is a graph respecting the hypotheses of the proposition and
	such that $G-N[u]$ is a connected 3-cop-win graph for every vertex $u$ of
	$G$ and such that $G$ does not contain a corner. By the previous discussion,
	it suffices to show the proposition for graphs respecting $M$.

	We now define property $P_1$. A graph $G$ is said to have property $P_1$ if
	$G$ contains two non-adjacent vertices of maximum degree $\Delta$. We use
	the Merging Algorithm to generate all graphs $G$ such that $M(G)$ holds and
	that respect property $P_1$, and then compute their cop numbers. More
	precisely, we choose $n$ and $\Delta$ according to the case we are
	considering, we set $D_1=D_2=\Delta$, and choose $\mathcal L_1=\mathcal L_2$
	to be the set of 3-cop-win graphs on $n-\Delta-1$ vertices with maximum
	degree at most $\Delta$, as computed in Lemma \ref{computationn1213}. We
	note that the Merging Algorithm computes a somewhat larger class of graphs
	than we want. In particular, the Merging Algorithm does not exclude graphs
	which contain corners and in its last step only tests whether $G-N[u]$ is
	3-cop-win (if $G-N[u]\in \mathcal L_1$) for vertices of maximum degree.

	\begin{table}[h]
		\centering
		\begin{tabular}{|c|c|c|c|c|c|c|c|c|c|c|}
			\cline{8-11}
			\multicolumn{7}{c}{} & \multicolumn{4}{|c|}{Cop number}
			\\
			\hline
			\rowcolor{gray!60}
			$\Delta$             & $n$                              & $\Delta_1$
			                     & $G_1$ & $d_1$                            &
			                     Base graphs & Final graphs & $1$   & $2$ & $3$
			                     & $\geq 4$                                  \\
			                     \hline
			\multirow{5}{*}{4}   & \multirow{2}{*}{17}              & $4$
			                     & $78$ & $4$                              & 123
			                     & 0            & 0     & 0 & 0
			                     & 0                                         \\
			                     \cline{3-11} &                        & $3$ &
			                     $2$                              & $3$
			                     & 10           & 0     & 0 & 0
			                     & 0           & 0                           \\
			                     \cline{2-11} & 18 & $4$
			                     & $1105$      & $4$          & 1668  & 0 & 0
			                     & 0           & 0            & 0            \\
			\cline{2-11}         & \multirow{2}{*}{19}              & $4$
			                     & $16514$ & $4$                              &
			                     33785       & 3            & 0     & 0 & 0
			                     & 3                                         \\
			                     \cline{3-11} &                        & $3$ &
			                     $9$                              & $3$
			                     & 911          & 0     & 0 & 0
			                     & 0           & 0                           \\
			                     \hline
			\multirow{6}{*}{5}   & \multirow{6}{*}{18}              & $5$
			                     & $93$ & $5$                              &
			                     14232       & 24416        & 0     & 5484 &
			                     18932                            & 0
			                     \\ \cline{3-11} &                        &
			                     \multirow{4}{*}{4}   & \multirow{4}{*}{78}
			                     & $4$         & 10062        & 39318
			                     & 0                                & 7410
			                     & 31908        & 0 \\ \cline{5-11} &
			                     &                       & & $3$
			                     & 534         & 18645        & 0     & 3455 &
			                     15190                            & 0
			                     \\ \cline{5-11} &                        & &
			                     & $2$         & 111          & 24238 & 0 & 1494
			                     & 22744       & 0                           \\
			                     \cline{5-11} & &
			                     &             & $1$          & 88    & 698809
			                     & 0                                & 82882
			                     & 615927       & 0 \\ \cline{3-11} &
			                     & $3$                   &  $2$ & $3$
			                     & 22          & 12778        & 0     & 4960 &
			                     7818                             & 0
			                     \\ \hline
		\end{tabular}
		\caption{Results of the first wave of computations using the Merging Algorithm. It presents the counts for the graphs built with the property that they contain 2 non-adjacent vertices of maximum degree. In particular, $d_1=d_2$ and $\Delta=D_1=D_2$. Furthermore, $G_1$ is chosen with maximum degree $\Delta_1$.}
		\label{tab:algoresultspart1}
	\end{table}

	The summary results are presented in Table \ref{tab:algoresultspart1}. For
	more detail, we also split up the graphs relative to the various possible
	maximum degrees of $G_1$, although we of course always merge with all of the
	possible graphs $G_2$, not only the $G_2$ with the same maximum degree. We
	note that there are no 3-cop-win graphs with maximum degree 3 on 13 vertices
	(which can also be seen in Table \ref{table:deg3}), and that the 3-cop-win
	graphs with maximum degree 3 on 12 and 14 vertices are 3-regular (and thus
	the only possible value of $d_1$ is 3).

	We note that the 4-cop-win graphs found on 19 vertices are actually all
	copies of the Robertson graph. In fact, the 3 copies correspond to 3
	different choices of $G_1$ which can yield the Robertson graph.

	With these results, we will then only consider graphs which do not have
	property $P_1$. In other words, the graphs left to consider are those such
	that the set of vertices of maximum degree of $G$ forms a clique. This is a
	very restrictive property, and will be very useful.

	We see that graphs $G$ such that $M(G)$ holds and for which $\Delta=4$
	necessarily respect property $P_1$. Indeed, let $u$ be a vertex of maximum
	degree in $G$. Consider $G' = G - N[u]$. If $G'$ contains a vertex of degree
	4, $P_1(G)$ is satisfied. Otherwise, we must have $\Delta(G') = 3$. If $G'$
	is not 3-regular, it is at most 2 cop-win (this holds for orders 12 to 14 as
	mentioned above) and therefore $M(G)$ does not hold. Therefore, any vertex
	in $G'$ that was adjacent to a vertex of $N(u)$ is also of degree 4 in $G$
	and not adjacent to $u$. Hence $P_1(G)$ holds. We can therefore suppose
	$\Delta(G) = 5$ for the remainder of the proof. Furthermore, since $P_1(G)$
	is false, we can assume that if there exists two vertices of maximum degree,
	they must be adjacent (hence they must form a clique).

	We now define property $P_2$. We say a graph $G$ has property $P_2$ if $G$
	contains two non-adjacent vertices $v_1$ and $v_2$ such that $v_1$ has
	degree either 3 or 4, $v_2$ has degree $5$, $v_1$ and $v_2$ have a common
	neighbour, and $G-N[v_1]$ has maximum degree at most $4$. Then, we compute
	the graphs $G$ such that $M(G)$ holds and $P_2(G)$ holds, but not $P_1(G)$.
	Precisely, run the Merging Algorithm by setting $n=18$, $D_2=\Delta=5$,
	$D_1$ to respectively either $3$ or $4$, $\mathcal L_1$ to be the 3-cop-win
	graphs on $12$ vertices with maximum degree at most $4$ (if $u$ is a vertex
	of degree $5$ and $G-N[u]$ has maximum degree 5, then $G$ necessarily
	respect property $P_1$), and $\mathcal L_2$ to be the 3-cop-win graphs on
	respectively either $14$ or $13$ vertices with maximum degree at most $4$.
	We have computed these lists $\mathcal L_1$ and $\mathcal L_2$ in Lemma
	\ref{computationn1213}. The results of this computation are presented in
	Table \ref{tab:algoresultspart2}.

	We note that as the number of possible vertices of maximum degree is
	generally smaller than before, there are fewer graphs thrown out because
	of some vertex $u$ of maximum degree such that $G-N[u]$ is not a 3-cop-win graph.
	Furthermore, we note that as the graphs on 14 vertices with maximum degree 3
	are 3-regular, when choosing any of these graphs as $G_1$ it is impossible
	for $d_1$ to be anything other than $3$.

	\begin{table}[h]
		\centering
		\begin{tabular}{|c|c|c|c|c|c|c|c|c|c|c|}
			\cline{8-11}
			\multicolumn{7}{c}{} & \multicolumn{4}{|c|}{Cop number}
			\\
			\hline
			\rowcolor{gray!60}
			$D_1$                & $G_2$                            & $\Delta_1$
			                     & $G_1$ & $d_1$                            &
			                     Base graphs        & Final graphs & $1$     &
			                     $2$ & $3$                              & $\geq
			                     4$
			                     \\ \hline
			\multirow{4}{*}{4}   & \multirow{4}{*}{1105}            &
			\multirow{3}{*}{4} & \multirow{3}{*}{78}  & $3$
			& $993$              & $41872$      & $0$ & $9299$
			& $32573$            & $0$                               \\
			\cline{5-11} & &                                  &
			& $2$          & $504$   & $70224$              & $0$
			& $4278$             & $65946$      & $0$ \\ \cline{5-11} &
			&                      & & $1$                              & $1138$
			& $3350712$    & $0$     & $417144$ & $2933568$
			& $0$                                                    \\
			\cline{3-11} & & 3                                & $2$
			& $3$          & $153$   & $41006$              & $0$
			& $15440$            & $25566$      & $0$ \\ \hline
			\multirow{2}{*}{3}   & \multirow{2}{*}{$16523$}         &
			\multirow{2}{*}{4} & \multirow{2}{*}{78}  & $2$
			& $2419$             & $83509$      & $0$ & $4187$
			& $79322$            & $0$                               \\
			\cline{5-11} & &                                  &
			& $1$          & $10582$ & $6293171$            & $0$
			& $786173$           & $5506998$    & $0$ \\ \hline
		\end{tabular}
		\caption{Results of the second wave of computations with the Merging Algorithm. It presents the counts for the graphs $G$ built with the property that $G-N[v_1]$ has maximum degree 4, $v_1$ and $v_2$ always have a common neighbour (in particular $d_1<D_1$) and the vertices of maximum degree form a clique. Here, we always have $n=18$ and $D_2=\Delta=5$, and $G_1$ is chosen with maximum degree $\Delta_1$.}
		\label{tab:algoresultspart2}
	\end{table}

	We see that none of the graphs are 4-cop-win. We claim that for all graphs
	$G$, if $M(G)$ holds then either $P_1(G)$ holds or $P_2(G)$ holds. Supposing
	the contrary, let $G$ be as graph such that $M(G)$ holds, but for which
	neither $P_1(G)$ nor $P_2(G)$ holds. We attempt to find a contradiction. As
	discussed earlier, we may only consider the case where $\Delta=5$ as $P_1$
	is always respected for $\Delta=4$.

	Let $v_2$ be a vertex of maximum degree, and denote $G_1=G-N[v_2]$. As $G_1$
	is 3-cop-win (in order for $M(G)$ to hold), $G_1$ contains at most 2
	vertices of degree 1. Otherwise, removing these vertices of degree 1, which
	does not change the cop number, would yield a graph with at most 9 vertices
	and having cop number at most 2 (by Theorem \ref{min3}). By our supposition, we
	know there cannot be a vertex $v_1\in N[v_2]^c$ of degree either 3 or 4 such
	that $v_1$ and $v_2$ have a common neighbour and $G-N[v_1]$ has maximum
	degree at most 4. A sufficient condition for this last requirement is that
	$v_1$ has a neighbour $a$ which is adjacent (or equal) to all vertices of
	maximum degree.

	We first consider the case that $G$ contains a unique vertex of degree 5.
	For any choice of $v_1$ in $N[v_2]^c$, if $v_1$ and $v_2$ have a common
	neighbour, $v_1$ trivially has a common neighbour with all vertices of
	degree 5. So, there are no vertices of $N[v_2]^c$ of degree 3 or 4. Then, no
	vertex of $G_1$-degree $2$ or $3$ has a neighbour in $N(v_2)$, and no vertex
	of $G_1$-degree 1 has more than 1 neighbour in $N(v_2)$. Therefore, only
	vertices of $G_1$-degree 1 can have a neighbour in $N(v_2)$, and even then
	they can only receive 1 each, implying there are at most 2 edges between $N(v_2)$
	and $N[v_2]^c$. On the other hand, as $G$ does not contain a corner, each
	vertex in $N(v_2)$ must have at least 1 neighbour in $N[v_2]^c$. Thus, there
	are at least 5 edges between $N(v_2)$ and the vertices of $N[v_2]^c$. Hence,
	there is a contradiction in this case.

	The other main case is if $N(v_2)$ contains at least one other vertex of
	degree 5. We recall that these vertices of degree 5 must form a clique as
	$G$ does not respect property $P_1$. Hence, if there is a vertex $v_1$ in
	$N[v_2]^c$ which has a neighbour $x\in N(v_2)$ of degree 5 (necessarily in
	$N(v_2)$) and such that $v_1$ has degree 3 or 4, then we have a
	contradiction as $x$ is adjacent (or equal) to all vertices of degree 5.
	Hence, no vertex of degree 5 in $N(v_2)$ can be adjacent to a vertex of
	$G_1$-degree 2 or 3, and its neighbours (of $G_1$-degree 1) can have no
	other neighbours in $N(v_2)$. As $G$ does not contain a corner, each vertex
	in $N(v_2)$ must have at least 1 neighbour in $N[v_2]^c$. Hence, as there are
	at most 2 vertices of $G_1$-degree 1, there can be only be at most 2
	vertices of degree 5 in $N(v_2)$.

	We first consider the case with 2 vertices $x_1,x_2\in N(v_2)$ of degree 5.
	Therefore, $x_1$ and $x_2$ each have exactly one neighbour in $N[v_2]^c$ (which
	are different). This means that there are 2 vertices of $G_1$-degree 1, one of
	which is adjacent to $x_1$ and the other is adjacent to $x_2$. Hence both
	$x_1$ and $x_2$, having degree 5, must have 3 neighbours in $N(v_2)$
	(possibly including each other). In particular, $x_1$ and $x_2$ must have at
	least one common neighbour in $N(v_2)$, as $N(v_2)$ has 5 vertices. Denote
	this common neighbour $y$. We know that $y$ must have a neighbour in
	$N[v_2]^c$ (otherwise $y$ is cornered by $v_2$), which will be our choice of
	$v_1$. As both vertices of $G_1$-degree 1 cannot have more edges with
	vertices in $N(v_2)$, $v_1$ necessarily has degree either $3$ or $4$. As $y$
	is a common neighbour to $v_2$, $x_1$ and $x_2$, this choice of $v_1$ yields
	a contradiction.

	We can similarly consider the case with exactly 1 vertex $x$ of degree 5 in
	$N(v_2)$. Then, $x$ has at most 2 neighbours (of $G_1$-degree 1) in
	$N[v_2]^c$. Hence, to have degree 5, $x$ has at least 2 neighbours $y_1,y_2$
	in $N(v_2)$. We have already seen that they cannot be adjacent to the
	neighbours of $x$ in $N[v_2]^c$. If one of them (say $y_1$) is adjacent to a
	vertex of $G_1$-degree 2 or 3, let $v_1$ be this last vertex. Then, $v_1$ is
	a vertex of degree 3 or 4, and its neighbour $y_1$ is adjacent to both
	vertices of maximum degree ($v_2$ and $x$), which is a contradiction.
	Otherwise, $y_1$ and $y_2$ are both adjacent to some vertex of $G_1$-degree
	1 (to which $x$ was necessarily not adjacent). Choose $v_1$ to be this last
	vertex of $G_1$-degree 1. It has degree at least 3 (one neighbour in $G_1$
	as well as $y_1,y_2$), and its neighbour $y_1$ is adjacent to both vertices
	of maximum degree ($v_2$ and $x$). This is a contradiction.

	Thus, our claim is verified: all graphs $G$ that satisfy $M$ respect either
	$P_1$ or $P_2$. We have computed the cop number of graphs such that $M(G)$
	and $P_1(G)$ hold, and graphs such that $M(G)$ and $P_2(G)$ hold but not
	$P_1(G)$. Hence, we have computed the cop number of all graphs such that
	$M(G)$ holds. This proves the current proposition.
\end{proof}

It is interesting that in the first part of computations, for all cases with
$\Delta=4$, not only is the Robertson graph the only 4-cop-win graph, but there
are no other candidate 4-cop-win graphs. A reasonable explanation for this might
be that when merging, too many vertices of high degree are created: either a
vertex of degree 5 or more is created (in which case the graph is immediately
thrown out) or there are "too many" vertices of degree 4, such that there is
always some $u$ of maximum degree for which $G-N[u]$ not 3-cop-win.

\subsection{Possible improvements}

This is only one of many possible computational approaches to solving the
problem. We now discuss a few improvements and alternatives that the interested
reader may want to apply.

Our approach was based on merging 3-cop-win graphs by looking at non-adjacent
vertices $v_1,v_2$. It is easy to see that one could also choose $v_1$ and $v_2$
to be adjacent. Even if the construction would be somewhat different, the ideas
are similar. In particular, after proving that $G$ does not contain non-adjacent
vertices of maximum degree, we could have proved that $G$ does not contain any
adjacent vertices of maximum degree, instead of considering $v_1$ of smaller
degree. This would then leave only the case with a single vertex of maximum
degree to be treated. With some additional heuristics or with a simplification
of the methods we used, this case could be dealt with more specifically.

We decided against this approach for few reasons. Although our approach required
us to compute more 3-cop-win graphs than otherwise needed, it allowed us to
implement only one Merging Algorithm. Furthermore, computing the 3-cop-win
graphs on 14 vertices with maximum degree (at most) 4 allowed us to
simultaneously handle on the case on $n=18$ with $d(v_1)=3$, and build the
candidate 4-cop-win graphs on 19 vertices with maximum degree 4.

Another method would be to not only merge graphs relative to pairs of vertices,
but varying sizes of subsets. This approach would certainly reduce the number of
intermediate graphs generated by the algorithm: instead of pruning graphs
after adding edges, we could build up a larger part of the graph. The difficulty
lies in implementing this approach. In particular, the structure of the
partially-constructed graphs would not be as simple, as there will be much more
than 6 or 7 types of vertices.

Although at the expense of some computation time, we have chosen not to
implement these improvements in order to keep the code as simple as possible.
Indeed, the simplicity of the code reduces the chances of it being erroneous, as
well as making it easier to verify. As the proof is completely dependent on the
results of the algorithm, we felt this compromise was justified.

A last idea essentially combines the processes of generating the graphs and
testing their cop number. Let $G$ be a connected graph and $e$ be some edge of
$G$. In general, it is unclear whether removing $e$ will help the robber or help
the cops, as this depends on many other factors. If we consider a slightly
modified ruleset so that the robber can use the edge $e$ but not the cops, we
might achieve some results. Denote $c'$ the cop number of this modified game.
With these rules, $c(G)\leq c'(G)$, as the new restrictions on $e$ can only
benefit the robber. Furthermore, $c'(G-e)\leq c'(G)$ because removing $e$ can
only help the cops, as they were not allowed to use it anyway. Thus, both
$c(G)$ and $c(G-e)$ are bounded above by $c'(G)$. If we modify the algorithm
which calculates the cop number to take into account the robber-only edge $e$ (a
fairly easy modification), we could then determine simultaneously whether both
$G$ and $G-e$ have cop number at most 3. This generalizes to larger subsets of
edges. Hence, in theory, we can reduce by a significant amount the number of
cases to consider by not distinguishing $G$ and $G-e$. If done for many edges,
this idea could decrease the number of graphs to consider exponentially. It is
not clear how many such "special edges" we can take in $G$ before the cop
number with the special edges diverges from the cop number without those edges.
We leave implementing and studying this approach as a problem. Note that modifying slightly the rules of
the game to study the cop number has been done many times before. For instance,
cop-only edges are an important tool in \cite{frankl_pursuit_1987} and allowing
the cops to teleport is used in \cite{lehner_cop_2021}.

%%%%%%%%%%%%%%%%%%%%%%%%%%%%%%%%%%%%%%%%%%%%%%%%%%%%%%%%%%%%%%%%%%%
%%%%%%%%%%%%%%%%%%%%%%%%%%%%%%%%%%%%%%%%%%%%%%%%%%%%%%%%%%%%%%%%%%%
%%%%%%%%%%%%%%%%%%%%%%%%%%%%%%%%%%%%%%%%%%%%%%%%%%%%%%%%%%%%%%%%%%%
%%%%%%%%%%%%%%%%%%%%%%%%%%%%%%%%%%%%%%%%%%%%%%%%%%%%%%%%%%%%%%%%%%%
%%%%%%%%%%%%%%%%%%%%%%%%%%%%%%%%%%%%%%%%%%%%%%%%%%%%%%%%%%%%%%%%%%%

%%%%%%%%%%%%%%%%%%%%%%%%%%%%%%%%%%%%%%%%%%%%%%%%%%%%%%%%%%%%%%%%%%%
%%%%%%%%%%%%%%%%%%%%%%%%%%%%%%%%%%%%%%%%%%%%%%%%%%%%%%%%%%%%%%%%%%%
%%%%%%%%%%%%%%%%%%%%%%%%%%%%%%%%%%%%%%%%%%%%%%%%%%%%%%%%%%%%%%%%%%%
%%%%%%%%%%%%%%%%%%%%%%%%%%%%%%%%%%%%%%%%%%%%%%%%%%%%%%%%%%%%%%%%%%%
%%%%%%%%%%%%%%%%%%%%%%%%%%%%%%%%%%%%%%%%%%%%%%%%%%%%%%%%%%%%%%%%%%%

\section{Main results}\label{mainsection} We are now ready to prove the desired
results.

\begin{theorem}\label{mainthm1} If $G$ is a connected graph such that $n\leq
		18$, then $c(G)\leq 3$.
\end{theorem}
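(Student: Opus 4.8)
The plan is to reduce $c(G)\le 3$ for all connected graphs on $n\le 18$ vertices to the results already established in the previous sections, by a downward induction on $n$ and a case split on the maximum degree $\Delta=\Delta(G)$. The base cases $n\le 10$ are exactly Theorem \ref{min3}, so assume $11\le n\le 18$ and, as an inductive hypothesis, that every connected graph on fewer than $n$ vertices has cop number at most $3$. First I would dispose of the low-degree regime: if $\Delta\le 3$ then Proposition \ref{propdegree3} applies directly (since $n\le 18\le 20$). Next, the high-degree regime: if $\Delta\ge n-11$, i.e. $\Delta>n-12$, then Corollary \ref{maxdegnlarger11} gives $c(G)\le 3$ immediately; and the two boundary values $\Delta=n-11$ and $\Delta=n-12$ are covered by Proposition \ref{propn11}\,(= Proposition \ref{propn12}), which asserts precisely $c(G)\le 3$ when $\Delta\in\{n-11,n-12\}$ and $n\le 18$. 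So the only surviving range is $4\le \Delta\le n-13$, which forces $n\ge 17$; thus the remaining cases are exactly $(n,\Delta)\in\{(17,4),(18,4),(18,5)\}$.

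For these three remaining cases the key tool is Proposition \ref{propremainingcases}, whose hypothesis is that every proper induced connected subgraph $H$ of $G$ satisfies $c(H)\le 3$. This is where the induction is used: any proper induced connected subgraph $H$ of $G$ has $|V(H)|<n$, so by the inductive hypothesis $c(H)\le 3$; hence the hypothesis of Proposition \ref{propremainingcases} is automatically met. That proposition then yields $c(G)\le 3$ unless $G$ is the Robertson graph. But the Robertson graph has $19$ vertices, so it cannot occur when $n\le 18$; therefore $c(G)\le 3$ in all three remaining cases, completing the induction.

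The only mild subtlety — and the step I would be most careful with — is bookkeeping on the ranges of $\Delta$: one must check that the intervals "$\Delta\le 3$", "$\Delta\in\{n-11,n-12\}$", "$\Delta\ge n-11$" together with the three explicit pairs $(17,4),(18,4),(18,5)$ genuinely exhaust all possibilities for $11\le n\le 18$. For $n\le 16$ one has $n-12\le 4$, so $\Delta\le 3$ or $\Delta\ge n-12$ already covers everything; for $n=17$ the gap between $\Delta\le 3$ and $\Delta\ge n-12=5$ is the single value $\Delta=4$; and for $n=18$ the gap between $\Delta\le 3$ and $\Delta\ge n-12=6$ is $\Delta\in\{4,5\}$. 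All of these are handled above, so no case is missed. (One should also note the degenerate point: for small $n$ such as $n=11,\dots,13$ the value $\Delta=4$ already satisfies $\Delta\ge n-12$ or is handled by Corollary \ref{maxdegnlarger11}/Lemma \ref{computationn1213}, so there is no conflict.) With the case analysis closed, the theorem follows.
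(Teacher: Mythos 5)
Your proof is correct and follows essentially the same route as the paper: the same case split on $\Delta$ (Proposition \ref{propdegree3} for $\Delta\le 3$, Corollary \ref{maxdegnlarger11} together with Proposition \ref{propn11} for the high-degree range, and Proposition \ref{propremainingcases} for the three leftover pairs $(17,4),(18,4),(18,5)$), with your induction on $n$ being exactly the paper's remark that applying Proposition \ref{propremainingcases} for $n=17$ before $n=18$ supplies the hypothesis on proper induced connected subgraphs. One tiny slip: Corollary \ref{maxdegnlarger11} requires $\Delta>n-11$ rather than $\Delta\ge n-11$, but this is harmless since you cover $\Delta=n-11$ separately via Proposition \ref{propn11}.
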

\begin{proof}
	The statement for all cases except $n=17$ with $\Delta=4$ and $n=18$ with
	$\Delta=4,5$ follows from Corollary \ref{maxdegnlarger11} and Propositions
	\ref{propn11} and \ref{propdegree3}. Then, apply Proposition \ref{propremainingcases}
	first for $n=17$ then for $n=18$. Indeed, applying our results in this order
	will allow us to know that every proper induced connected subgraph $H$ of
	$G$ respects $c(H)\leq 3$ as $H$ necessarily has smaller order.

	See Table \ref{tab:applythms} for a visual guide to which proposition to
	apply in each case.
\end{proof}

\begin{table}[h]
	\centering
	\begin{tabular}{|c|cccccc|}
		\cline{2-7}
		\multicolumn{1}{c}{} & \multicolumn{6}{|c|}{$n$}
		\\
		\hline
		$\Delta$     & \cellcolor{gray!60}\hspace{0.17cm} $\leq
		14$\hspace{0.18cm} & \cellcolor{gray!60}15                    &
		\cellcolor{gray!60}16                    & \cellcolor{gray!60}17
		& \cellcolor{gray!60}18                      & \cellcolor{gray!60}19
		\\
		\hline
		\cellcolor{gray!60}$\leq 3$ & \cellcolor{green!70}\ref{propdegree3}
		& \cellcolor{green!70}\ref{propdegree3}    &
		\cellcolor{green!70}\ref{propdegree3}    &
		\cellcolor{green!70}\ref{propdegree3}      &
		\cellcolor{green!70}\ref{propdegree3}      &
		\cellcolor{green!70}\ref{propdegree3}             \\
		\cellcolor{gray!60}4        & \cellcolor{blue!60}\ref{maxdegnlarger11}
		& \cellcolor{yellow}\ref{propn11} (1)      &
		\cellcolor{orange!70}\ref{propn11} (2)   &
		\cellcolor{red!70}\ref{propremainingcases} &
		\cellcolor{red!70}\ref{propremainingcases} &
		\cellcolor{red!70}\ref{propremainingcases}        \\
		\cellcolor{gray!60}5        & \cellcolor{blue!60}\ref{maxdegnlarger11}
		& \cellcolor{blue!60}\ref{maxdegnlarger11} &
		\cellcolor{yellow}\ref{propn11} (1)      &
		\cellcolor{orange!70}\ref{propn11} (2)     &
		\cellcolor{red!70}\ref{propremainingcases} &
		\\
		\cellcolor{gray!60}6        & \cellcolor{blue!60}\ref{maxdegnlarger11}
		& \cellcolor{blue!60}\ref{maxdegnlarger11} &
		\cellcolor{blue!60}\ref{maxdegnlarger11} &
		\cellcolor{yellow}\ref{propn11} (1)        &
		\cellcolor{orange!70}\ref{propn11} (2)     &
		\\
		\cellcolor{gray!60}7        & \cellcolor{blue!60}\ref{maxdegnlarger11}
		& \cellcolor{blue!60}\ref{maxdegnlarger11} &
		\cellcolor{blue!60}\ref{maxdegnlarger11} &
		\cellcolor{blue!60}\ref{maxdegnlarger11}   &
		\cellcolor{yellow}\ref{propn11} (1)        &
		\cellcolor{magenta!50}\ref{propositionn19D78} (2) \\
		\cellcolor{gray!60}8        & \cellcolor{blue!60}\ref{maxdegnlarger11}
		& \cellcolor{blue!60}\ref{maxdegnlarger11} &
		\cellcolor{blue!60}\ref{maxdegnlarger11} &
		\cellcolor{blue!60}\ref{maxdegnlarger11}   &
		\cellcolor{blue!60}\ref{maxdegnlarger11}   &
		\cellcolor{magenta!75}\ref{propositionn19D78} (1) \\
		\cellcolor{gray!60}$\geq 9$ & \cellcolor{blue!60}\ref{maxdegnlarger11}
		& \cellcolor{blue!60}\ref{maxdegnlarger11} &
		\cellcolor{blue!60}\ref{maxdegnlarger11} &
		\cellcolor{blue!60}\ref{maxdegnlarger11}   &
		\cellcolor{blue!60}\ref{maxdegnlarger11}   &
		\cellcolor{blue!60}\ref{maxdegnlarger11}          \\
		\hline
	\end{tabular}
	\caption{Proposition/corollary to apply for each case in Theorems
	\ref{mainthm1} and \ref{mainthm2}. For Propositions \ref{propn11} and
	\ref{propositionn19D78} we specify the case using the numbering from the
	proof.}
	\label{tab:applythms}
\end{table}

Considering there exists a known 4-cop-win graph on 19 vertices, the Robertson
graph, we get the following corollary.
\begin{corollary}
	$M_4=19$.
\end{corollary}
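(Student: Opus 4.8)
The plan is to observe that this corollary is an immediate consequence of Theorem \ref{mainthm1} together with the fact, already recalled in the introduction, that the Robertson graph (Figure \ref{fig:petersenrobertson_graph}) is a connected $4$-cop-win graph on $19$ vertices. So the proof amounts to combining a lower bound and an upper bound on $M_4$.

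First I would establish $M_4 \geq 19$. By definition, $M_4 = \min\{|V(G)| : G \text{ connected}, c(G)=4\}$. Theorem \ref{mainthm1} asserts that every connected graph $G$ with $n \leq 18$ satisfies $c(G) \leq 3$, hence no such graph has cop number $4$. Therefore any connected graph with $c(G) = 4$ must have at least $19$ vertices, giving $M_4 \geq 19$.

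Next I would establish $M_4 \leq 19$. As discussed in the introduction, the Robertson graph is the $(4,5)$-cage: it is $4$-regular with girth $5$, so by the Aigner--Fromme bound (girth at least $5$ forces cop number at least the minimum degree) its cop number is at least $4$; and placing one cop on each of $a,b,c$ leaves only two independent edges uncovered, which a fourth cop handles, so its cop number is exactly $4$. Since the Robertson graph has $19$ vertices and is connected, $M_4 \leq 19$.

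Combining the two inequalities yields $M_4 = 19$. There is no real obstacle here — the entire content is packaged in Theorem \ref{mainthm1}, whose proof (via Table \ref{tab:applythms}) is the substantive work; the only thing to be careful about is citing the existing verification that the Robertson graph is genuinely $4$-cop-win rather than merely conjectured to be so.
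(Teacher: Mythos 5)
Your proposal is correct and matches the paper's argument exactly: Theorem \ref{mainthm1} rules out 4-cop-win graphs on at most 18 vertices, and the Robertson graph (4-regular of girth 5, hence $c\geq 4$ by Aigner--Fromme, and caught by 4 cops via the three exterior vertices plus one more) witnesses $M_4\leq 19$. The paper states this in one line immediately before the corollary, so there is nothing to add.
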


We also want to narrow down the possible 4-cop-win graphs on 19 vertices.

\begin{theorem}\label{mainthm2} Let $G$ be a connected graph such that $n=19$.
	If $\Delta\leq 3$ or $\Delta \geq 7$, then $c(G)\leq 3$. If $\Delta=4$, then
	$c(G)\leq 3$, unless $G$ is the Robertson graph.
\end{theorem}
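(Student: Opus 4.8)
The plan is to prove the statement by a case analysis on the maximum degree $\Delta$ of $G$, since for each range of $\Delta$ the conclusion is already available from a result established above; the bottom row of Table \ref{tab:applythms} is the intended visual guide. Throughout, $n=19$ is fixed.

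First I would dispatch the small- and large-degree cases, where no new work is needed. If $\Delta\leq 3$, then since $n=19\leq 20$, Proposition \ref{propdegree3} gives $c(G)\leq 3$ immediately. If $\Delta\geq 9$, then $\Delta>n-11=8$, so Corollary \ref{maxdegnlarger11} yields $c(G)\leq 3$. If $\Delta\in\{7,8\}$, then Proposition \ref{propositionn19D78} applies verbatim and gives $c(G)\leq 3$. Together these settle every case with $\Delta\leq 3$ or $\Delta\geq 7$, which is the first assertion of the theorem. (Note it is essential here that $\Delta=7,8$ be handled by the dedicated Proposition \ref{propositionn19D78}: Proposition \ref{propn11} covers $\Delta\in\{n-12,n-11\}$ only for $n\leq 18$, so it does not reach $n=19$.)

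It then remains to treat $\Delta=4$. Here I would invoke Proposition \ref{propremainingcases}, case (3), whose conclusion is exactly the desired dichotomy — either $c(G)\leq 3$ or $G$ is the Robertson graph. The only thing to verify is its hypothesis, namely that every proper induced connected subgraph $H$ of $G$ satisfies $c(H)\leq 3$. But any such $H$ has $n(H)\leq n-1=18$, so this follows at once from Theorem \ref{mainthm1}. Hence $c(G)\leq 3$ unless $G$ is the Robertson graph, completing the $\Delta=4$ case (and it is consistent that the Robertson graph, being $4$-regular, surfaces only here).

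I do not expect any substantial obstacle in this proof: all of the difficulty has already been absorbed into the earlier results — the structural cops-and-robbers arguments on (cornered) Petersen graphs behind Proposition \ref{propositionn19D78}, and the Merging Algorithm computation behind Proposition \ref{propremainingcases} — so Theorem \ref{mainthm2} is essentially their assembly. The one point worth stating explicitly is that the cases $\Delta\in\{5,6\}$ are not covered by any available result, which is precisely why the theorem makes no claim about them; resolving those would require extending either the degree-based structural analysis of Section \ref{highdegreesection} or the Merging Algorithm, and is left open.
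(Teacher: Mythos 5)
Your proposal is correct and is essentially identical to the paper's own proof: both assemble the result from Proposition \ref{propdegree3} ($\Delta\leq 3$), Proposition \ref{propositionn19D78} ($\Delta\in\{7,8\}$), Corollary \ref{maxdegnlarger11} ($\Delta\geq 9$), and Proposition \ref{propremainingcases} with Theorem \ref{mainthm1} supplying the induced-subgraph hypothesis for $\Delta=4$. Your remark that Proposition \ref{propn11} does not reach $n=19$, necessitating the dedicated Proposition \ref{propositionn19D78}, is a correct reading of Table \ref{tab:applythms}.
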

\begin{proof}
	This is a direct consequence of Corollary \ref{maxdegnlarger11} and
	Propositions \ref{propositionn19D78}, \ref{propdegree3} and
	\ref{propremainingcases} (in the last case using Theorem \ref{mainthm1} to
	say that $c(H)\leq 3$ when $H$ is a proper induced connected subgraph of
	$G$).

	See Table \ref{tab:applythms} for a visual guide to which proposition to
	apply in each case.
\end{proof}

We leave filling the missing cases in this theorem as a conjecture.

\begin{conjecture}
	There does not exist a connected graph $G$ such that $n=19$, $\Delta\in
	\{5,6\}$ and $c(G)=4$.
\end{conjecture}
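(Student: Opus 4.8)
The plan is to extend the case analysis of Theorem \ref{mainthm2} to $\Delta\in\{5,6\}$ by exactly the two-pronged strategy already used in Section \ref{remainingcasessection}: the structural reductions of Proposition \ref{propremainingcases} combined with the Merging Algorithm, fed by larger lists of small $3$-cop-win graphs. As in that proof, the first step is to reduce, using Corollaries \ref{retractcomponents} and \ref{removecorner}, to a graph $G$ satisfying the property $M(G)$: $G$ has no corner and, for every vertex $u$, $G-N[u]$ is a connected $3$-cop-win graph. (If $G-N[u]$ is disconnected it has a component of order at most $8$ and hence cop number at most $2$, so Corollary \ref{retractcomponents} finishes; if $c(G-N[u])\le 2$ a stationary cop on $u$ finishes; a corner is retracted away and the cop number is inherited from a smaller graph, which has cop number $\le 3$ by Theorem \ref{mainthm1}.)

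The second step is computational: enlarge Lemma \ref{computationn1213} to classify all connected $3$-cop-win graphs on $12$ vertices with $\Delta\le 6$, on $13$ vertices with $\Delta\le 5$, and on $14$ and $15$ vertices with $\Delta\le 5$, together with the lists on $13$, $14$, $15$ vertices with $\Delta\le 6$ needed for the low-degree-$v_1$ sub-cases of $\Delta=6$. These serve as the input lists $\mathcal L_1,\mathcal L_2$: for $n=19$ a maximum-degree vertex $u$ has $|V(G-N[u])|=13$ when $\Delta=5$ and $|V(G-N[u])|=12$ when $\Delta=6$.

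The third step is to run the Merging Algorithm twice in each case, mirroring Proposition \ref{propremainingcases}. First, with $D_1=D_2=\Delta$ and $\mathcal L_1=\mathcal L_2$ the appropriate $3$-cop-win list, to generate and test all candidates $G$ with property $P_1$ (two non-adjacent vertices of maximum degree). Then, assuming $P_1$ fails so that the maximum-degree vertices form a clique, run it with $D_2=\Delta$ and $D_1\in\{3,4\}$ (resp. $D_1\in\{3,4,5\}$ when $\Delta=6$), taking $v_1$ to have a neighbour dominating the whole clique; here one may use for $\mathcal L_1$ the $3$-cop-win graphs on $13$ (resp. $12$) vertices with $\Delta\le 4$ (resp. $\Delta\le 5$), since a maximum-degree vertex inside $G-N[v_2]$ would reinstate $P_1$. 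A short edge-counting argument of the kind carried out at the end of the proof of Proposition \ref{propremainingcases} — counting edges between $N(v_2)$ and $N[v_2]^c$, using that $G$ has no corner and that a $3$-cop-win graph has at most two vertices of degree $1$ — should show that $M(G)$ forces $P_1$ or this $P_2$-type property, making the two waves exhaustive; one then verifies that none of the resulting graphs is $4$-cop-win.

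The main obstacle is computational scale, concentrated in the $\Delta=6$ case: classifying all connected $3$-cop-win graphs on $15$ vertices with $\Delta\le 6$ is well beyond the enumerations in Table \ref{table:n11-14}, and Phases 1 and 2 of the Merging Algorithm then multiply the number of intermediate graphs considerably. Making this feasible would likely require the refinements sketched at the end of Section \ref{remainingcasessection} — merging along larger vertex subsets rather than a single pair, or the ``robber-only edge'' trick to collapse $G$ and $G-e$ — or substantially more computing time. A purely structural treatment of $\Delta=6=n-13$ in the spirit of Section \ref{highdegreesection} seems harder still, since the relevant graphs $G-N[u]$ are the (numerous) $3$-cop-win graphs on $12$ vertices, which, unlike the cornered Petersen graphs, are not all variations on a single graph and need not even contain an induced Petersen graph.
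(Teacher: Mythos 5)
This statement is left as an open conjecture in the paper --- the authors explicitly do not prove it, remarking only that it ``seems within reach'' with a better implementation and additional ideas. Your proposal is, in substance, the same programme the authors gesture at, but it is a research plan rather than a proof: its essential content is a family of computations (enumerating all connected $3$-cop-win graphs on $12$ vertices with $\Delta\leq 6$, on $13$ vertices with $\Delta\leq 5$, and on $13$--$15$ vertices with the various degree bounds needed for the low-$D_1$ sub-cases, and then running both waves of the Merging Algorithm on these far larger lists) that you have not carried out and that you yourself identify as infeasible at the scale described, particularly for $\Delta=6$. A conditional ``if these computations terminate and return no $4$-cop-win graph, then the conjecture holds'' is not a proof of the conjecture.

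Beyond the missing computations, the one genuinely mathematical step you assert --- that an edge-counting argument ``should show'' that $M(G)$ forces $P_1$ or a $P_2$-type property --- is not established and does not transfer verbatim from Proposition \ref{propremainingcases}. That argument for $n=18$, $\Delta=5$ leans on the fact that $G_1=G-N[v_2]$ has $12$ vertices and hence at most $2$ vertices of degree $1$ (three such vertices would leave a $9$-vertex graph, forcing $c(G_1)\leq 2$). For $n=19$, $\Delta=5$, the graph $G_1$ has $13$ vertices, and stripping $3$ degree-one vertices leaves a $10$-vertex graph that could be the Petersen graph; so $G_1$ may have up to $3$ vertices of degree $1$, the bound on edges between $N(v_2)$ and $N[v_2]^c$ weakens, and every inequality in the case analysis (including the sub-cases with one or two further maximum-degree vertices in $N(v_2)$) must be rechecked and may require new sub-cases or a third property $P_3$. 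For $\Delta=6$ the clique of maximum-degree vertices can be larger and $N(v_2)$ has $6$ vertices, changing the counts again. Until that dichotomy is actually proved for the new parameters and the enumerations are executed, there is no proof here --- which is precisely why the paper records the statement as a conjecture.
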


This would show that the Robertson graph is the unique 4-cop-win graph on 19
vertices. With a better implementation of the algorithm, in some low overhead
programming language such as C, and with a few additional good ideas, this
problem seems within reach. On the other hand, finding $M_5$ with the methods
used in this article is out of reach. A fully non-computational proof
$M_4=19$ would also be of great interest, and may be more instructive on how to
approach $M_5$.

It is asked in \cite{baird_minimum_2014} whether the minimum $d$-cop-win graphs
are $(d,5)$-cage graphs for every $d$. Although we now have further evidence
pointing towards this conjecture, any general proof of this statement is still
beyond our grasp.

\section*{Acknowledgments}
We thank Seyyed Aliasghar Hosseini for introducing us to this and other similar
problems and for many helpful discussions. We are grateful to Ben Seamone and
Ge\v na Hahn for their support over the course of this project. We also thank
the reviewers for their comments, which have improved the presentation and
clarity of this article.

We also acknowledge that most computations were done on the Université de
Montréal - Département de mathématiques et de statistique computer network.

We thank the Natural Sciences and Engineering Research Council of Canada (NSERC), the Fonds de Recherche du Qu\'ebec - Nature et technologies (FRQNT) and McGill University for their financial support. Nous remercions le Conseil de recherches en sciences naturelles et en génie du Canada (CRSNG), le Fonds de Recherche du Qu\'ebec - Nature et technologies (FRQNT) et l'Université McGill pour leur soutien financier.

\bibliography{newrefs}
\bibliographystyle{abbrv}

\end{document}